\tikzset{
	-Latex,auto,node distance =1 cm and 1 cm,semithick,
	state/.style ={ellipse, draw, minimum width = 0.7 cm},
	point/.style = {circle, draw, inner sep=0.04cm,fill,node contents={}},
	bidirected/.style={Latex-Latex,dashed},
	el/.style = {inner sep=2pt, align=left, sloped}
}
\declaretheorem[name=Theorem]{theorem}
\declaretheorem[name=Proposition,sibling=theorem]{proposition}
\declaretheorem[name=Corollary,sibling=theorem]{corollary}
\declaretheorem[name=Remark,sibling=theorem]{remark}
\declaretheorem[name=Definition,sibling=theorem]{definition}
\DeclareMathOperator{\supp}{supp}
\DeclareMathOperator{\Diag}{Diag}
\DeclareMathOperator{\tr}{tr}
\DeclareMathOperator{\ldet}{ldet}
\def\Re{\mathbb{R}}
\def\hat{\widehat}
\def\Z{\mathcal{Z}}
\renewcommand*{\qed}{\hfill\ensuremath{\square}}
\renewcommand{\S}{\mathbb{S}}
\begin{document}


\RUNAUTHOR{Li, Fampa, Lee, Qiu, Xie, Yao}

\RUNTITLE{D-optimal Data Fusion: Exact and Approximation Algorithms}

\TITLE{D-optimal Data Fusion:\\ Exact and Approximation Algorithms}

	\ARTICLEAUTHORS{%
	\AUTHOR{Yongchun Li}
	\AFF{School of ISyE, Georgia Institute of Technology, Atlanta, GA, USA, \EMAIL{ycli@gatech.edu}} 
	\AUTHOR{Marcia Fampa}
\AFF{COPPE, Universidade Federal do Rio de Janeiro, Brasil, 
\EMAIL{fampa@cos.ufrj.br}}
	\AUTHOR{Jon Lee}
\AFF{IOE Department, University of Michigan, Ann Arbor, MI, USA, \EMAIL{jonxlee@umich.edu}}
		\AUTHOR{Feng Qiu}
	\AFF{Energy Systems Division, Argonne National Laboratory, Lemont, IL, USA, fqiu@anl.gov}
		\AUTHOR{Weijun Xie}
	\AFF{School of ISyE, Georgia Institute of Technology, Atlanta, GA, USA, \EMAIL{wxie@gatech.edu}
	\AUTHOR{Rui Yao}
	\AFF{Energy Systems Division, Argonne National Laboratory, Lemont, IL, USA, ryao@anl.gov}
} 
}

\ABSTRACT{%
We study the D-optimal Data Fusion (DDF) problem, which aims to select new data points, given an existing Fisher information matrix, so as to maximize the logarithm of the determinant of the overall Fisher information matrix. We show that the DDF problem is NP-hard and has no constant-factor polynomial-time approximation algorithm unless P $=$ NP. Therefore, to solve the DDF problem effectively, we propose two convex integer-programming formulations and investigate their corresponding complementary and Lagrangian-dual problems. We also develop scalable randomized-sampling and local-search algorithms with provable performance guarantees. Leveraging the concavity of the objective functions in the two proposed formulations, we design an exact algorithm, aimed at solving the DDF problem to optimality. We further derive a family of submodular valid inequalities and optimality cuts, which can significantly enhance the algorithm performance. Finally, we test our algorithms using real-world data on the new phasor-measurement-units placement problem for modern power grids, considering the existing conventional sensors. Our numerical study demonstrates the efficiency of our exact algorithm and the scalability and high-quality outputs of our approximation algorithms.
}%


\KEYWORDS{Data fusion, Fisher information matrix, D-optimality, maximum-entropy sampling, approximation algorithm, exact algorithm, submodular inequality, optimality cut}

\maketitle

%
\section{Introduction}\label{intro} 
We study the following D-optimal Data Fusion problem
\begin{align}\label{model}
z^*:=\max_{S \subseteq [n], |S|=s} \ldet \bigg(\bm{C} + \sum_{i \in S} \bm{a}_i\bm{a}_i^{\top}\bigg), \tag{DDF}
\end{align}
where ldet denotes the natural logarithm of the determinant, the positive-definite matrix $\bm C \in \S_{++}^d$ is an existing Fisher Information Matrix (FIM),
the columns $\{\bm a_i \in \Re^d\}_{i\in [n]}$ of $\bm{A}\in \Re^{d\times n}$
 represent $n$ candidate $d$-dimensional data points to be selected, and the positive integer $s\in [n]:=\{1,2,\ldots,n\}$ denotes the number of data points to be selected. In \ref{model}, the FIM comprises two parts: $\bm C$ and $\sum_{i \in S} \bm{a}_i\bm{a}_i^{\top}$, corresponding to the information obtained from existing data and new selected points, respectively. Therefore, the goal of \ref{model} is to maximize the information gain by integrating new data points with conventional data from other sources. It is worth remarking that \ref{model} differs from the classic D-optimal design problem in the following aspects: (i) the conventional D-optimal design problem does not have the existing FIM $\bm C$; (ii) the D-optimal design problem typically assumes that $s\ge d$, while in \ref{model}, it is possible that $s< d$. Hence, the existing results for D-optimal design in \cite{singh2020approximation,madan2019combinatorial,nikolov2019proportional,DOPT_PFL} do not directly apply to \ref{model}.

Below, we describe some interesting examples for which the proposed \ref{model} can be applicable. 
\begin{itemize}
	\item 
\textit{Sensor Fusion}: Modern sensor networks often involve multi-type sensors working collectively for a specific monitoring task (see \cite{varshney2012distributed, zhang1995two}). 
When installing new (possibly high-end) sensors, in order to achieve economical and effective operation, it is desirable to maximize the overall information obtained by fusing a small number of new sensors with the existing ones. Using the D-optimality criterion, widely-used in sensor placement, the corresponding optimal sensor-fusion problem is equivalent to \ref{model}. In particular, $\bm C \in \S_{++}^n$ represents the FIM of the existing sensors installed at an $n$-node network. The additional FIM components introduced by new sensors can be shown to be equivalent to adding their corresponding rank-one matrices into the existing FIM (see, e.g., \cite{li2011phasor,kekatos2011convex,yang2013optimal}). 
In this problem, the dimension $d$ of sensor measurements is equal to the number of sensor locations, i.e., $d=n$, and we must have $s\le d=n$. We test a sensor fusion problem in power systems in our numerical study section.

\item \textit{Active Learning}: In the big-data era, there are much more unlabeled data than labeled ones, where the latter are often expensive to acquire. 
Recently, researchers have been proactively working on active learning to select a small subset of unlabeled data points to label, 
with the hope of achieving a desired learning goal (e.g., classification accuracy) using fewer training data and/or achieving smaller labeling costs. Active learning enables the learners to iteratively select the most informative data points by exploiting labeled ones and exploring the unlabeled ones, 
 which is also known as sequential experimental design in statistics.
Thus, given that the FIM of the existing labeled data points is positive-definite, the D-optimal new-data selection problem can be formulated in the form of \ref{model} (see \cite{he2009laplacian,lapsins2021active, martinez2019sequential,yu2006active}). 
At each active learning iteration, the number of selected unlabeled data points is often much smaller than the dimension, especially for high-dimensional data, and can even be set to one for the sake of computational efficiency and stable convergence (see \cite{wang2017active}).

\item \textit{Regularized D-optimal Design}: 
The regularized experimental design that arises from the linear models with a regularization penalty has been recently studied in \cite{tantipongpipat2020lambda, martinez2019sequential}, which admits the same form as our \ref{model}. 
For regularized D-optimal design, its FIM often involves an additional positive-definite matrix $\bm C$ (e.g., an identity matrix in ridge-regularized D-optimal design).
\end{itemize}

\subsection{Maximum-Entropy Sampling Problem (MESP)}

\ref{model} is closely related to the maximum-entropy sampling problem
\begin{align}\label{MESP}
\max_{S \subseteq [n], |S|=s} \ldet \bm{C}_{S,S}
, \tag{MESP}
\end{align}
where $\bm C\in\mathbb{S}_+^n$ has rank at least $s$, and $\bm C_{S,S}$ denotes the principal submatrix of $\bm C$ indexed by $S$.

Since \cite{shewry1987maximum}, \ref{MESP} has seen a variety of applications in statistics and information theory, as well as algorithmic advances (see \cite{FL2022}). Below, we demonstrate that \ref{MESP} is a special case of \ref{model} with $n=d$ when the covariance matrix of \ref{MESP} is positive-definite. This result implies that \ref{model} is NP-hard (see \cite{ko1995exact}) and cannot be approximated by any polynomial-time algorithm within $\Omega(n\log (c))$ with some constant $c>1$ unless $P= NP$ according to \cite{civril2013exponential}. Therefore, to efficiently solve \ref{model}, this paper focuses on developing exact and near-optimal approximation algorithms.

In fact, as we will see below,
\ref{model} is also a special case of \ref{MESP}.

\begin{theorem} For a covariance matrix $\bm C\in\S^n_{++}$, \ref{MESP} is a special case of \ref{model}.
\end{theorem}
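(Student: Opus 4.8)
The plan is to reverse the matrix-determinant (Sylvester) identity that already realizes \ref{model} as a special case of \ref{MESP}. For any $\bm C\in\S^d_{++}$ and $\bm A\in\Re^{d\times n}$ with columns $\bm a_i$, letting $\bm A_S$ collect the columns indexed by $S$, the identity $\det(\bm C+\bm A_S\bm A_S^\top)=\det(\bm C)\det(\bm I_s+\bm A_S^\top\bm C^{-1}\bm A_S)$ gives
\[
\ldet\Big(\bm C+\textstyle\sum_{i\in S}\bm a_i\bm a_i^\top\Big)=\ldet \bm C+\ldet\big(\bm I_n+\bm A^\top\bm C^{-1}\bm A\big)_{S,S},
\]
because $(\bm I_n+\bm A^\top\bm C^{-1}\bm A)_{S,S}=\bm I_s+\bm A_S^\top\bm C^{-1}\bm A_S$. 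Thus \ref{model} coincides with \ref{MESP} on the matrix $\bm B:=\bm I_n+\bm A^\top\bm C^{-1}\bm A\succ 0$, up to the additive constant $\ldet\bm C$. To prove the theorem I would run this construction backwards: given a \ref{MESP} instance with $\bm M\in\S^n_{++}$, I need to exhibit $\bm C$ and $\bm A$ for which $\bm B$ equals $\bm M$ up to a harmless rescaling.

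Concretely, I would first rescale. Since $|S|=s$ is fixed, for any $c>0$ we have $\ldet(c\bm M)_{S,S}=\ldet\bm M_{S,S}+s\log c$, so scaling $\bm M$ only shifts the objective by a constant and preserves the set of maximizers. Choosing $c:=1/\lambda_{\min}(\bm M)>0$ yields $c\bm M\succeq\bm I_n$, hence $\bm G:=c\bm M-\bm I_n\in\S^n_+$. I then set $\bm C:=\bm I_n$ (so $d=n$ and $\bm C\in\S^n_{++}$) and take $\bm A:=\bm G^{1/2}\in\Re^{n\times n}$, the symmetric positive-semidefinite square root, whose columns are the candidate data points $\bm a_i$. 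Then $\bm A^\top\bm A=\bm G$, so that $\bm I_n+\bm A^\top\bm C^{-1}\bm A=\bm I_n+\bm G=c\bm M$.

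Substituting this choice into the displayed identity gives, for every $S$ with $|S|=s$,
\[
\ldet\Big(\bm I_n+\textstyle\sum_{i\in S}\bm a_i\bm a_i^\top\Big)=\ldet(c\bm M)_{S,S}=\ldet\bm M_{S,S}+s\log c .
\]
Consequently the constructed \ref{model} instance and the original \ref{MESP} instance have identical feasible sets and objective values differing by the fixed constant $s\log c$; in particular they share the same optimal solutions, which establishes the claim with $d=n$.

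The only real subtlety is the positive-semidefiniteness needed for the factorization $\bm G=\bm A^\top\bm A$: the matrix-determinant identity forces $\bm B\succeq\bm I_n$, which a generic covariance matrix need not satisfy, so the rescaling step is genuinely necessary and is precisely where the hypothesis $\bm M\in\S^n_{++}$ enters, guaranteeing $\lambda_{\min}(\bm M)>0$. I expect this rescaling-and-factorization step, rather than the determinant algebra, to be the point that most needs care; it is also what lets the reduction produce instances with $d=n$ and arbitrary $s\le n$, in contrast to the classical D-optimal design assumption $s\ge d$.
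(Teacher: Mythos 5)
Your proposal is correct and follows essentially the same route as the paper: rescale the covariance matrix by $1/\lambda_{\min}$ so that subtracting the identity leaves a positive semidefinite matrix, factor that matrix, take its columns as the candidate data points with existing FIM $\bm I_n$, and absorb the rescaling into an additive constant $s\log c$. The only cosmetic difference is that you use the symmetric square root where the paper uses the Cholesky factor; either factorization of $\bm G$ works equally well.
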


\begin{proof}
Let $\bm F$ be the Cholesky factor of the positive semidefinite matrix $(\bm C / \lambda_{\min}(\bm C) - \bm I_n)$. 
Then, $\bm C / \lambda_{\min}(\bm C) = \bm I_n + \bm F^\top \bm F$, and for any $S \subseteq [n]$, $|S|=s$, we can verify that 
\begin{equation}\label{eqmespa}
\begin{array}{ll}
\ldet \bm C_{S,S} & = \ldet\left( (\bm C / \lambda_{\min}(\bm C))_{S,S} \right) - s \log(1 / \lambda_{\min}(\bm C) ) \\[3pt]
 &  = \ldet\left( (\bm I_n + \bm F^\top \bm F)_{S,S} \right) - s \log(1 / \lambda_{\min}(\bm C) ) \\[3pt]
  & = \ldet \left( \bm I_n + \sum_{i=1}^n x_i \bm f_i \bm f_i^\top \right) - s \log(1 / \lambda_{\min}(\bm C) ) ,
\end{array}
\end{equation}
where $\bm f_i\in\Re^n$ is the $i$-th column of $\bm F$ and $\bm x$ is the 0/1 characteristic vector of $S\subseteq[n]$, i.e., $x_j=1$, if $j\in S$; $x_j=0$ if $j\in [n]\setminus S$. The result follows because the first term in \eqref{eqmespa} is the objective function of \ref{MESP} for the covariance matrix $\bm C$, and the last term is the objective function of a particular \ref{model} added to a constant.
\qed
\end{proof}

\begin{theorem} \ref{model} is a special case of MESP.
\end{theorem}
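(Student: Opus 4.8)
The plan is to mirror the construction in the preceding theorem, but in reverse: starting from an arbitrary \ref{model} instance with $\bm C\in\S^d_{++}$ and data points $\{\bm a_i\}_{i\in[n]}$, I would manufacture a positive-definite $n\times n$ covariance matrix whose MESP objective reproduces the \ref{model} objective up to an additive constant, over the \emph{same} ground set $[n]$ and the \emph{same} cardinality $s$.

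First I would absorb the existing FIM $\bm C$ into the objective by a whitening step. Writing $\bm b_i := \bm C^{-1/2}\bm a_i$ and factoring $\bm C^{1/2}$ out on both sides, one obtains
\begin{equation*}
\ldet\Bigl(\bm C + \sum_{i\in S}\bm a_i\bm a_i^\top\Bigr)
= \ldet \bm C + \ldet\Bigl(\bm I_d + \sum_{i\in S}\bm b_i\bm b_i^\top\Bigr).
\end{equation*}
Since $\ldet \bm C$ is a constant independent of $S$, maximizing the left-hand side over $S$ with $|S|=s$ is equivalent to maximizing $\ldet(\bm I_d + \sum_{i\in S}\bm b_i\bm b_i^\top)$.

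The crux is to recast this $d\times d$ log-determinant as the log-determinant of an $s\times s$ principal submatrix. Collecting the $\bm b_i$ as columns of $\bm B\in\Re^{d\times n}$ and letting $\bm B_S$ be the $d\times s$ submatrix indexed by $S$, I would invoke the Sylvester (Weinstein--Aronszajn) determinant identity
\begin{equation*}
\det\bigl(\bm I_d + \bm B_S\bm B_S^\top\bigr) = \det\bigl(\bm I_s + \bm B_S^\top\bm B_S\bigr)
= \det\bigl((\bm I_n + \bm B^\top\bm B)_{S,S}\bigr),
\end{equation*}
where the last equality holds because $\bm B_S^\top\bm B_S$ is precisely the principal submatrix of the Gram matrix $\bm B^\top\bm B$ indexed by $S$, and $\bm I_s=(\bm I_n)_{S,S}$. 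This identity is the key step and essentially the only non-routine one, since it is what collapses the dimension from $d$ to $s$ and exposes the principal-submatrix structure required by \ref{MESP}.

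Finally I would set $\widehat{\bm C} := \bm I_n + \bm B^\top\bm B\in\S^n$ as the MESP covariance matrix. Chaining the two displays shows that the \ref{model} objective equals $\ldet \widehat{\bm C}_{S,S}$ plus the constant $\ldet\bm C$, so the two problems share identical optimal solutions. It remains only to verify the \ref{MESP} feasibility condition: $\widehat{\bm C}$ is positive definite, being the sum of $\bm I_n$ and the positive-semidefinite Gram matrix $\bm B^\top\bm B$, hence has full rank $n\ge s$, certifying that $\widehat{\bm C}$ is a legitimate \ref{MESP} instance. I expect no genuine obstacle beyond carefully bookkeeping the index sets so that the reduction from the $d$-dimensional to the $s$-dimensional determinant, and then the embedding into the $n\times n$ matrix, are matched correctly.
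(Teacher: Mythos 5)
Your proposal is correct and follows essentially the same route as the paper: whitening by $\bm C^{-1/2}$ to reduce to $\ldet(\bm I_d+\sum_{i\in S}\bm b_i\bm b_i^\top)$, then the Sylvester identity $\det(\bm I_d+\bm B_S\bm B_S^\top)=\det((\bm I_n+\bm B^\top\bm B)_{S,S})$ to exhibit the objective as a principal minor of the positive-definite matrix $\bm I_n+\bm B^\top\bm B$, up to the additive constant $\ldet\bm C$. The only difference is that you spell out the intermediate $s\times s$ step and the rank/feasibility check explicitly, which the paper leaves implicit.
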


\begin{proof}
Let $\bm C\in\S_{++}^d $ and $\bm a_i\in\Re^d$, for $i\in [n]$, be the input data for \ref{model}. Let $\bm C^{\frac{1}{2}}$ be the square root of $\bm C$ and 
$\bm{B}:=[\bm{b}_1,\cdots,\bm{b}_n]\in \Re^{d\times n}$,
	where $\bm b_i := \bm{C}^{-\frac{1}{2}} \bm a_i$, for $ i \in [n]$. Then, for any $S \subseteq [n]$, $|S|=s$, we can verify that 
\begin{equation}\label{eqmesp}
\begin{array}{ll}
\ldet \bigg(\bm C + \sum_{i \in S} \bm a_i\bm a_i^{\top}\bigg) &= \ldet \bm C + \ldet(\bm I_d + \sum_{i \in S} \bm C^{-\frac{1}{2}}\bm a_i\bm a_i^{\top}\bm C^{-\frac{1}{2}})\\
& = \ldet \bm C + \ldet(\bm I_d + \sum_{i \in S} \bm b_i\bm b_i^{\top})\\[3pt]
& = \ldet \bm C + \ldet ( \bm I_n + \bm B^\top \bm B)_{S,S} .
\end{array}
\end{equation}
The result follows because the first term in \eqref{eqmesp} is the objective function of \ref{model} for the
existing FIM $\bm C$, set $\{ \bm a_i \in \Re^d\}_{i\in [n]}$ of $n$ candidate data points, and $s$ number of data points to be selected; and the last term in \eqref{eqmesp} is the objective function of a particular \ref{MESP} added to a constant.
\qed
\end{proof}


\subsection{Relevant literature}
In this subsection, we survey the relevant literature on exact and approximation algorithm for solving \ref{model} or its variants including the regularized D-optimal design and MESP.

%
%
%
%


\textit{Exact Algorithms:} As a special case of \ref{model}, \cite{he2009laplacian} and \cite{martinez2019sequential} studied the regularized D-optimal design with number of selected data being $s=1$ and derived a closed-form optimal solution by using the formula for the determinant of a rank-one change to a symmetric matrix. A series of research works aimed at solving MESP, another special case of \ref{model}, to optimality by a 
branch-and-bound algorithm have been conducted in \cite{ko1995exact,LeeConstrained,AFLW_IPCO,AFLW_Using,LeeWilliamsILP,HLW,AnstreicherLee_Masked,BurerLee,anstreicher2018maximum,anstreicher2020efficient,Mixing,chen2022computing}. 
\cite{BartonACC,BartonORL} and also \cite{FampaLeeOA} instead considered outer-approximation approaches, and
\cite{li2020best} developed a branch-and-cut algorithm based on (sub)gradient inequalities.
In contrast to previous approaches, we propose a new algorithm by deriving (sub)gradient inequalities and submodular inequalities from two equivalent convex integer-programming formulations of \ref{model},
and exploring probing techniques to obtain optimality cuts. The submodular inequalities and the optimality cuts are numerically demonstrated to significantly strengthen the exact algorithm proposed, which is an enhancement of the LP/NLP branch-and-bound algorithm from \cite{QG92}.


\textit{Approximation Algorithms:} Besides exact algorithms, more scalable yet effective approximation algorithms have also attracted attention, such as greedy, local-search, and randomized-sampling algorithms. Although the objective function of \ref{model} is submodular and non-decreasing, it can be negative. Thus, the existing performance guarantees (e.g., $(1-1/e)$-approximation ratio) for nonnegative non-decreasing submodular maximization cannot be directly applied \cite{nemhauser1978analysis}. On the other hand, it has been shown that the randomized-sampling and local-search algorithms can be successfully applied to the D-optimal design and \ref{MESP} to generate provably near-optimal solutions \cite{li2020best,madan2019combinatorial,singh2020approximation,nikolov2015randomized,nikolov2019proportional}.
Motivated by these recent breakthroughs, we tailor these two approximations algorithms to \ref{model}, and we develop efficient implementations and theoretical guarantees. 
In particular, our approximation bounds of the sampling and local-search algorithms are invariant with 
respect to $s\leftrightarrow n-s$, outperforming the state-of-the-art ones \cite{nikolov2015randomized,li2020best} for \ref{MESP} when $s > n/2$. The detailed comparison can be found in \Cref{sec:approx}. 

\subsection{Summary of the organization and contributions}
\begin{enumerate}[(i)]
\item We derive two convex integer-programming (CIP) formulations of \ref{model}, termed R-DDF and M-DDF. We also study the Lagrangian-dual of their continuous relaxations and establish their optimality gaps.

\item We develop complementary formulations of R-DDF and M-DDF, based on the fact that \ref{model} can be interpreted as excluding $n-s$ less informative data points from a cardinality-$n$ dataset. We show that R-DDF shares the same continuous relaxation value as that of its complement and M-DDF does not.

\item Exploring the two CIPs and the Lagrangian-dual of their continuous relaxations, we establish the theoretical performance guarantees of proposed local-search and randomize-sampling algorithms for solving \ref{model}, as displayed in \Cref{table1}. 
We remark that each CIP provides us a different analysis of approximation bounds and some bounds in \Cref{table1} are invariant with respect to $s$ and $n-s$, due to the complementary formulations.

\item Exploring the two CIPs, we succeed to derive closed-form (sub)gradient and submodular based valid inequalities, which 
are used on an enhancement of the LP/NLP branch-and-bound (B\&B) algorithm proposed in \cite{QG92}, for solving \ref{model}. 
In \cite{MFR20} a numerical comparison is presented between the main algorithms in the literature for convex mixed-integer nonlinear-programming (MINLP), and the LP/NLP B\&B presents an excellent performance in the comparison.

\item We investigate probing techniques to derive optimality cuts to strengthen \ref{model} from both primal and dual perspectives, where our probing schemes are effective and easy-to-implement, based on tight Lagrangian dual bounds and near-optimal approximation algorithms. Our numerical study confirms the effectiveness of the optimality cuts.


\item All the analyses and results can be extended to the regularized D-optimal design problem and \ref{MESP} with a positive-definite sampling covariance matrix.

\end{enumerate}

\begin{table}[ht] 
	\vskip -0.1in
	\centering
	\caption{Approximation bounds for the local-search and sampling algorithms}
	\label{table1}
	\begin{threeparttable}
		\setlength{\tabcolsep}{3pt}\renewcommand{\arraystretch}{1.3}
		\begin{tabular}{l| c| c }
			\hline 
	& \multicolumn{1}{c|}{\ref{model_dopt}} &	\multicolumn{1}{c}{\ref{model_mesp} } \\
			\hline
	Local-Search \Cref{algo:localsearch}& $ d \log \left(1+ \frac{\bar{s} \sigma^2_{\max}}{d(1+\sigma_{\max})} \right)$ \vphantom{\bigg|} & $
	 \bar{s}\log(\bar{s})$ \\
			\hline
Sampling \Cref{algo:sampling} & $ n\log(x_{\min}) -(n-s)\log(1+\delta)$ & $\bar{s} \log\left(\frac{\bar{s}}{n}\right) + \log \left(\binom{n}{\bar{s}}\right)$ \vphantom{\Big|}\\
			\hline
		\end{tabular}%
		\vspace{+.1em}
		\begin{tablenotes}
			\item {$\bar{s} :=\min\{s, n-s\}; \sigma_{\max}:=\max_{i\in [n]}\bm a_i^{\top} \bm C^{-1}\bm a_i; \delta = \lambda_{\max}(\bm A^{\top}\bm C^{-1} \bm A)$} 
		\end{tablenotes} 
	\end{threeparttable}
	\vspace{-.8em}
\end{table}

\noindent \textit{Notation:}	The following notation is used throughout the paper. We use bold lower-case letters (e.g., $\bm{x}$) and bold upper-case letters (e.g., $\bm{X}$) to denote vectors and matrices, respectively, and we use corresponding non-bold letters (e.g., $x_i$) to denote their components. We let $\Re^n_+$ denote the set of all $n$-dimensional nonnegative vectors. Given positive integers $s<n$, we let $[n]:=\{1,2,\cdots, n\}$, $[s,n]:=\{s,s+1,\cdots, n\}$, and $\bar s := \min\{s,n-s\}$. Further, we let $\Z_s$ denote the collection of feasible solutions satisfying the cardinality constraint, i.e.,
	$\Z_s := \{\bm x \in \{0,1\}^n: \sum_{i\in [n]} x_i = s\}.$
We let $\S_+^n$ (resp., $\S_{++}^n$) denote the cone of $n\times n$ symmetric positive semidefinite (resp., definite) matrices.
We let $\bm{C}^{\frac{1}{2}}$ denote the square root of matrix $\bm C$, i.e., $\bm{C}^{\frac{1}{2}} \bm{C}^{\frac{1}{2}} = \bm C$. We let $\bm{C}^{\dag}$ denote the Moore-Penrose pseudo-inverse of $\bm{C}$. 
We let $\bm{I}_n$ denote the $n\times n$ identity matrix, and we let $\bm{e}_i\in \Re^n$ denote the $i$-th standard-unit vector. We let $\bm 1$ denote a vector with all entries being 1.
For $\bm x\in\Re^n$, we denote its support
by $\supp(\bm x)\subseteq [n]$. 
We let $|S|$ denote the cardinality of a finte set $S$. 
Overloading the notation $\binom{s}{k}$ for the number of $k$-subsets of an $s$-element set,
given a set $S$, we let $\binom{S}{k}$ denote the collection of all the cardinality-$k$ subsets of $S$. Given an $m \times n$ matrix $\bm{X}$ and two subsets $S\subseteq [m]$, $T\subseteq [n]$, we let $\bm{X}_{S,T}$ denote the submatrix of $\bm{X}$ with rows and columns indexed by sets $S, T$, respectively, and we let $\bm{X}_S$ denote the submatrix of $\bm{X}$ with columns indexed by $S$.
Given a symmetric matrix $\bm{X}$, we let $\lambda_{\min}(\bm{X}),\lambda_{\max}(\bm{X})$ denote the least and greatest eigenvalues of $\bm{X}$, respectively. For $\bm X, \bm Y\in \Re^{n\times n}$, we let $\bm X \circ \bm Y$ denote their Hadamard product, and $\tr \bm X$ denote the trace of $\bm X$. 
Additional notation is introduced as needed.

\begin{remark}\label{rem:notation} Throughout the paper, considering the existing FIM $\bm C$, the $n$ candidate $d$-dimensional data points $\{\bm a_i \in \Re^d\}_{i\in [n]}$ to be selected in \ref{model}, and $\bm{A}:=[\bm{a}_1,\cdots,\bm{a}_n]\in \Re^{d\times n}$, we use the definitions: $\bm b_i := \bm{C}^{-\frac{1}{2}} \bm a_i$, for $ i \in [n]$, 
 $\bm{B}:=[\bm{b}_1,\cdots,\bm{b}_n]\in \Re^{d\times n}$, $\bm q_i := (\bm C + \bm A\bm A^{\top})^{-\frac{1}{2}}\bm a_i$, for $i\in [n]$; we let $\bm V^{\top} \bm V$ be the 
the Cholesky factorization of $\bm I_n + \bm B^{\top} \bm B$, and let $\bm v_i$ be the $i$-th column of $\bm V$, for $i\in [n]$.
\end{remark}

\vspace{0.1in} 

\noindent \textit{Organization:} The remainder of the paper is organized as follows. In \Cref{sec:cip}, we develop two CIP formulations for \ref{model}, and their corresponding complementary problems and Lagrangian duals. In Section \ref{sec:approx}, we develop and analyze two approximation algorithms. In Section \ref{sec:bc}, we present our exact algorithmic approach with the introduction of valid submodular inequalities and optimality cuts. In Section \ref{sec:pmu}, we present numerical results on a real-world application in power systems. Finally, Section \ref{sec:conclusion} contains brief conclusions.

\section{Two convex integer-programming formulations}\label{sec:cip}
Next, we present two convex integer-programming (CIP) formulations for \ref{model}, termed R-DDF and M-DDF, as well as their complementary problems and the Lagrangian-dual of their relaxations.



\subsection{First CIP formulation: R-DDF}
To formulate \ref{model} as a mathematical program, we introduce the binary variable $x_i=1$, if the $i$-th data point is selected, and 0 otherwise, for each $i\in [n]$. Our first formulation of \ref{model} is as follows.
\begin{restatable}{proposition}{themdopt} \label{them:dopt}
	\ref{model} is equivalent to
	\begin{equation} \label{model_dopt}\tag{R-DDF}
	z^* := \ldet \bm C +
	\max_{\bm{x}\in \Z_s} \Bigg\{\ldet \bigg(\bm{I}_d+\sum_{i \in [n]} x_i \bm{b}_i \bm{b}_i^{\top} \bigg) \Bigg\}. 
	\end{equation}
\end{restatable}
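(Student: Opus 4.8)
The plan is to establish the equivalence by a congruence transformation of the matrix inside the determinant, followed by the standard identification of cardinality-$s$ subsets with their 0/1 characteristic vectors. Since the computation mirrors the one already carried out in \eqref{eqmesp}, the argument is short, and the only structural input is the positive-definiteness of $\bm C$.

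First, I would fix an arbitrary $S \subseteq [n]$ with $|S| = s$ and factor the fused FIM through $\bm C^{1/2}$. Writing $\bm C = \bm C^{1/2}\bm C^{1/2}$, I have $\bm C + \sum_{i\in S}\bm a_i\bm a_i^\top = \bm C^{1/2}\big(\bm I_d + \sum_{i\in S}\bm C^{-1/2}\bm a_i\bm a_i^\top\bm C^{-1/2}\big)\bm C^{1/2}$, which is well-defined precisely because $\bm C \in \S_{++}^d$ guarantees that $\bm C^{1/2}$ exists and is invertible. Taking determinants and logarithms and using the multiplicativity $\ldet(\bm X\bm Y) = \ldet \bm X + \ldet \bm Y$ together with the definition $\bm b_i = \bm C^{-1/2}\bm a_i$ yields $\ldet(\bm C + \sum_{i\in S}\bm a_i\bm a_i^\top) = \ldet\bm C + \ldet(\bm I_d + \sum_{i\in S}\bm b_i\bm b_i^\top)$, exactly as in \eqref{eqmesp}.

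Next, I would rewrite the restricted sum $\sum_{i\in S}$ as a full sum weighted by the 0/1 characteristic vector $\bm x$ of $S$, i.e.\ $\sum_{i\in S}\bm b_i\bm b_i^\top = \sum_{i\in[n]} x_i\,\bm b_i\bm b_i^\top$. Because the condition $|S| = s$ is equivalent to $\sum_{i\in[n]} x_i = s$, the map $S \mapsto \bm x$ is a bijection between the cardinality-$s$ subsets of $[n]$ and the feasible set $\Z_s$. Maximizing both sides over this common feasible set, and pulling out the constant $\ldet\bm C$ (which is independent of $S$), then gives the claimed identity for $z^*$.

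I do not anticipate a genuine obstacle: the statement is essentially a reindexed restatement of \eqref{eqmesp}. The only point deserving care is to confirm that the congruence step needs nothing beyond $\bm C \succ 0$ (so that $\bm C^{1/2}$ and the $\bm b_i$ are defined), which is part of the standing hypotheses, and that the subset/characteristic-vector correspondence preserves both the feasible region and the objective term by term, so that taking the maximum commutes with the transformation.
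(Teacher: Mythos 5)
Your proof is correct and takes essentially the same route as the paper's: both factor the fused FIM as $\bm C^{1/2}\bigl(\bm I_d + \sum_{i} x_i \bm b_i\bm b_i^{\top}\bigr)\bm C^{1/2}$ (equivalently, pull out $\det(\bm C)$), take logarithms, and identify cardinality-$s$ subsets with their characteristic vectors in $\Z_s$. There is no gap.
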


\begin{proof}
	Let $\bm x$ be the 0/1 characteristic vector of $S\subseteq[n]$. Then, we have 
	\[
	\begin{array}{ll}
 \det \bigg(\bm{C} + \sum_{i \in S} \bm{a}_i\bm{a}_i^{\top}\bigg)&= \det \bigg(\bm{C} + \sum_{i \in [n]}x_i \bm{a}_i\bm{a}_i^{\top} \bigg)
	= \det (\bm C) \det \bigg(\bm{I}_d+ \sum_{i \in [n]}x_i \bm{b}_i\bm{b}_i^{\top} \bigg).
	\end{array}
	\]
	The result follows from the definition of $\bm{b}_i$ (see Remark \ref{rem:notation}) and by taking the logarithm on both sides of the above equation.
	\qed
\end{proof}

We refer to the problem as \ref{model_dopt} because the objective function is similar to that of the regularized D-optimal design problem.
The following result presents the Lagrangian dual of the continuous relaxation of \ref{model_dopt}.

\begin{restatable}{proposition}{themdoptld}\label{prop:dopt_ld}
	The Lagrangian dual of 
	the continuous relaxation of \ref{model_dopt} is 
		\begin{equation} \label{eq:dopt_ld}
\hat z_R := \ldet \bm C + 
\min_{\substack{{\bm{\Lambda} \in \S_{++}^d,}\\{ \nu, \bm{\mu} \in \Re^n_+}} }	
\bigg\{-\ldet (\bm{\Lambda} ) + \tr \bm{\Lambda} +s\nu +\sum_{i \in [n]}\mu_i-d: 
	\bm{b}_i^{\top}\bm{\Lambda}\bm{b}_i \le \nu+ \mu_i, ~ i \in [n] \bigg\}.
	\end{equation}
\end{restatable}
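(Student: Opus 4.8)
The plan is to compute the Lagrangian dual by first writing out the continuous relaxation of \ref{model_dopt} and then handling the $\ldet$ term through its variational (conjugate) representation. The continuous relaxation drops integrality, giving $\max\{\ldet(\bm I_d+\sum_{i\in[n]}x_i\bm b_i\bm b_i^\top): \sum_{i\in[n]}x_i=s,\ \bm x\in[0,1]^n\}$. First I would record that the objective $\bm x\mapsto\ldet(\bm I_d+\sum_{i\in[n]}x_i\bm b_i\bm b_i^\top)$ is nondecreasing in each $x_i$, since enlarging $\bm x$ adds positive semidefinite rank-one terms; hence replacing the cardinality equality by $\sum_{i\in[n]}x_i\le s$ does not change the optimal value, and dualizing this inequality produces a \emph{nonnegative} multiplier $\nu\ge 0$, matching the claimed domain. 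I would then dualize only the cardinality constraint (multiplier $\nu\ge 0$) and the upper bounds $x_i\le 1$ (multipliers $\bm\mu\in\Re^n_+$), keeping $\bm x\ge \bm 0$ as the domain of the inner maximization. This yields the dual function
\[
g(\nu,\bm\mu)=\nu s+\sum_{i\in[n]}\mu_i+\max_{\bm x\ge \bm 0}\Big\{\ldet\big(\bm I_d+\textstyle\sum_{i\in[n]}x_i\bm b_i\bm b_i^\top\big)-\sum_{i\in[n]}(\nu+\mu_i)x_i\Big\}.
\]

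The key step is to eliminate the $\ldet$ term using the conjugate identity $\ldet\bm M=\min_{\bm\Lambda\in\S_{++}^d}\{\tr(\bm\Lambda\bm M)-\ldet\bm\Lambda\}-d$, valid for $\bm M\in\S_{++}^d$ and attained at $\bm\Lambda=\bm M^{-1}$ (this is where the matrix dual variable $\bm\Lambda\in\S_{++}^d$ enters). Substituting $\bm M=\bm I_d+\sum_{i\in[n]}x_i\bm b_i\bm b_i^\top$ and using $\tr(\bm\Lambda\bm b_i\bm b_i^\top)=\bm b_i^\top\bm\Lambda\bm b_i$, the inner maximization becomes the saddle problem
\[
\max_{\bm x\ge \bm 0}\ \min_{\bm\Lambda\in\S_{++}^d}\Big\{\tr\bm\Lambda-\ldet\bm\Lambda-d+\sum_{i\in[n]}x_i\big(\bm b_i^\top\bm\Lambda\bm b_i-\nu-\mu_i\big)\Big\},
\]
whose objective is linear (hence concave) in $\bm x$ and convex in $\bm\Lambda$.

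I would then interchange the $\max_{\bm x\ge \bm 0}$ and $\min_{\bm\Lambda\in\S_{++}^d}$ and evaluate the inner maximization explicitly: a linear function over the nonnegative orthant is bounded above only when every coefficient is nonpositive, i.e. $\bm b_i^\top\bm\Lambda\bm b_i\le\nu+\mu_i$ for all $i\in[n]$, in which case the maximum is attained at $\bm x=\bm 0$ and equals the remaining constant $\tr\bm\Lambda-\ldet\bm\Lambda-d$. Collecting $\nu s+\sum_{i\in[n]}\mu_i$ and adding back the constant $\ldet\bm C$ from \ref{model_dopt} then reproduces exactly the claimed program \eqref{eq:dopt_ld}. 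The main obstacle is rigorously justifying the max–min interchange, since neither feasible set is compact: I would support it either by a minimax theorem (using convexity–concavity together with the coercivity of $-\ldet\bm\Lambda$ as $\bm\Lambda$ approaches the boundary of $\S_{++}^d$), or, more cleanly, by invoking strong duality for the inner concave maximization, whose Slater condition holds because $\bm x=\tfrac12\bm 1>\bm 0$ is strictly feasible with $\bm I_d+\sum_{i\in[n]}x_i\bm b_i\bm b_i^\top\in\S_{++}^d$; this guarantees that the inner primal value equals its dual value and legitimizes the swap.
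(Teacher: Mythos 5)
Your derivation is correct and lands exactly on \eqref{eq:dopt_ld}, but it reaches it by a slightly different route than the paper. The paper introduces an explicit auxiliary matrix variable $\bm X\in\S_{++}^d$ with the constraint $\bm I_d+\sum_{i\in[n]}x_i\bm b_i\bm b_i^\top\succeq\bm X$, assigns a multiplier to every constraint at once (including a multiplier $\bm\upsilon\in\Re^n_+$ for $\bm x\ge\bm0$ and an unrestricted $\nu$ for the cardinality equality), and reads off the dual from the stationarity conditions $\bm\Lambda=\bm X^{-1}$ and $\bm b_i^\top\bm\Lambda\bm b_i-\nu+\upsilon_i-\mu_i=0$, afterwards eliminating $\bm\upsilon$ to obtain the inequalities. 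You instead dualize only the scalar constraints, keep $\bm x\ge\bm0$ as the domain (so the inequality form $\bm b_i^\top\bm\Lambda\bm b_i\le\nu+\mu_i$ appears directly, with no $\bm\upsilon$ to eliminate), and bring in $\bm\Lambda$ through the conjugate identity $\ldet\bm M=\min_{\bm\Lambda\in\S_{++}^d}\{\tr(\bm\Lambda\bm M)-\ldet\bm\Lambda\}-d$; this costs you a max--min interchange, which you correctly identify as the only delicate step and for which your Slater-based justification is adequate (and no less rigorous than the paper's unexamined ``maximizing $L$ yields\ldots''). Two small credits to your version: your monotonicity argument cleanly explains why the multiplier of the cardinality constraint can be taken nonnegative, reconciling the proof's $\nu\in\Re$ with the $\nu\ge0$ appearing in the stated dual, a point the paper passes over; and your observation that boundedness of the inner linear maximization forces the dual constraints is the transparent reason those constraints appear at all. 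The two derivations are of course two faces of the same partial dualization --- the conjugate representation of $\ldet$ is precisely what dualizing the paper's constraint $\bm M(\bm x)\succeq\bm X$ produces --- so there is no substantive gap.
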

	\begin{proof}
	First, we introduce an auxiliary matrix variable $\bm X \in \S_{++}^d$ and reformulate \ref{model_dopt} as
	\[
	\ldet \bm C + \max_{\bm X \in \S_{++}^d,
	\bm{x} \in \Z_s} \bigg\{\ldet (\bm X): \bm{I}_d+\sum_{i \in [n]} x_i \bm{b}_i \bm{b}_i^{\top} \succeq \bm X \bigg\} .
	\]
	Then we derive the Lagrangian dual of the above maximization problem over $\bm x, \bm X$.
	Let $\bm{\Lambda} \in \S_{++}^d$, $\nu\in \Re$, $\bm{\upsilon}\in \Re^{n}_+$, $\bm{\mu}\in \Re^{n}_+$ denote the Lagrangian multipliers. The Lagrangian function $L$ is
	\[
	L(\bm{x}, \bm{X}, \bm{\Lambda}, \nu, \bm{\upsilon},\bm{\mu}) = \ldet (\bm{X} ) + \tr \bm{\Lambda} - \tr(\bm{X}\bm{\Lambda}) + \sum_{i \in [n]}\left(\bm{b}_i^{\top}\bm{\Lambda}\bm{b}_i-\nu+\upsilon_i-\mu_i \right)x_i + s\nu+\sum_{i \in [n]}\mu_i.
	\]
	Maximizing $L$ over $(\bm{x}, \bm{X})$ yields
	\[
	\bm{\Lambda} = \bm{X}^{-1},~ \bm{b}_i^{\top}\bm{\Lambda}\bm{b}_i-\nu+\upsilon_i-\mu_i = 0, \forall i \in [n]. 
	\]
	Then the Lagrangian dual problem can be obtained by plugging the above result into $L$, removing $\bm{\upsilon}$, and minimizing $L$ over $(\bm \Lambda, \nu, \bm \mu)$. 
\qed
\end{proof}

{We further derive the gradient, Hessian, and Lipschitz constant of the objective function of \ref{model_dopt}, which allow us to use first- or second-order methods (e.g., Frank-Wolfe algorithm) to compute $\hat{z}_R$ with a proven convergence rate. 
We demonstrate the following result.}
\begin{restatable}{proposition}{propdoptlip}\label{prop:dopt_lips}
	For any $\bm x\in [0,1]^n $, the gradient $\bm g \in \Re^n$ and the Hessian $\bm H \in \Re^{n\times n}$ of the objective function in the continuous relaxation of \ref{model_dopt} are
	\[\bm g(\bm x):=[\bm b_1^{\top} \bm X^{-1}\bm b_1, \cdots, \bm b_n^{\top} \bm X^{-1}\bm b_n], \quad 
	\bm H(\bm x) :=- (\bm B^{\top} \bm X^{-1} \bm B)\circ (\bm B^{\top} \bm X^{-1} \bm B), \mbox{ and } \bm H(\bm x) \succeq - \delta^2 \bm I_n, \]
	where $\bm X :=\bm I_d + \sum_{i \in [n]} x_i \bm b_i \bm b_i^{\top}$ and $\delta := \lambda_{\max}(
\bm B^{\top}\bm B 
	)$.
\end{restatable}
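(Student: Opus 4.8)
The plan is to obtain the gradient and Hessian by direct matrix differentiation of the objective $f(\bm x) := \ldet \bm X(\bm x)$, with $\bm X(\bm x) = \bm I_d + \sum_{i\in[n]} x_i \bm b_i \bm b_i^{\top}$, and then to control the Hessian through a Hadamard-product eigenvalue inequality. For the gradient I would invoke the standard identity $\partial_{x_i}\ldet \bm X = \tr(\bm X^{-1}\,\partial_{x_i}\bm X)$ together with $\partial_{x_i}\bm X = \bm b_i \bm b_i^{\top}$, which gives $g_i(\bm x) = \tr(\bm X^{-1}\bm b_i\bm b_i^{\top}) = \bm b_i^{\top}\bm X^{-1}\bm b_i$, exactly the claimed expression. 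Since each $\bm b_i\bm b_i^{\top}\succeq 0$ and $x_i\ge 0$ on $[0,1]^n$, we have $\bm X\succeq \bm I_d$, so $\bm X^{-1}$ is well defined and $f$ is smooth on the whole box, making the formula valid throughout.

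For the Hessian I would differentiate $g_i$ once more using $\partial_{x_j}\bm X^{-1} = -\bm X^{-1}(\partial_{x_j}\bm X)\bm X^{-1} = -\bm X^{-1}\bm b_j\bm b_j^{\top}\bm X^{-1}$, which yields $H_{ij}(\bm x) = -\bm b_i^{\top}\bm X^{-1}\bm b_j\,\bm b_j^{\top}\bm X^{-1}\bm b_i = -(\bm b_i^{\top}\bm X^{-1}\bm b_j)^2$. Writing $\bm M := \bm B^{\top}\bm X^{-1}\bm B$, so that $M_{ij} = \bm b_i^{\top}\bm X^{-1}\bm b_j$, this is precisely $H_{ij} = -(M_{ij})^2 = -(\bm M\circ\bm M)_{ij}$, i.e. $\bm H(\bm x) = -(\bm M\circ\bm M)$ as stated.

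The main work is the semidefinite bound $\bm H(\bm x)\succeq -\delta^2\bm I_n$, equivalently $\bm M\circ\bm M \preceq \delta^2\bm I_n$, which I would derive from a Schur/Oppenheim-type inequality: for $\bm A,\bm B\succeq 0$ one has $\lambda_{\max}(\bm A\circ\bm B)\le (\max_i A_{ii})\,\lambda_{\max}(\bm B)$. Rather than cite this, I would include its two-line proof, expanding $\bm A = \sum_k\lambda_k\bm v_k\bm v_k^{\top}$ and bounding each quadratic form $(\bm u\circ\bm v_k)^{\top}\bm B(\bm u\circ\bm v_k)\le \lambda_{\max}(\bm B)\,\|\bm u\circ\bm v_k\|^2$, which sums to $\lambda_{\max}(\bm B)\sum_i u_i^2 A_{ii}$. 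Applying it with $\bm A=\bm B=\bm M$ gives $\lambda_{\max}(\bm M\circ\bm M)\le (\max_i M_{ii})\,\lambda_{\max}(\bm M)$, and it then remains to bound both factors by $\delta$. From $\bm X\succeq\bm I_d$ we get $\bm 0\preceq\bm X^{-1}\preceq\bm I_d$, so $\bm M = \bm B^{\top}\bm X^{-1}\bm B\preceq\bm B^{\top}\bm B$ and hence $\lambda_{\max}(\bm M)\le\lambda_{\max}(\bm B^{\top}\bm B)=\delta$; likewise $M_{ii} = \bm b_i^{\top}\bm X^{-1}\bm b_i\le\bm b_i^{\top}\bm b_i = (\bm B^{\top}\bm B)_{ii}\le\lambda_{\max}(\bm B^{\top}\bm B)=\delta$, since the diagonal of a PSD matrix is dominated by its largest eigenvalue. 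Combining yields $\lambda_{\max}(\bm M\circ\bm M)\le\delta^2$, which is the claimed bound.

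The one step needing genuine care, and which I expect to be the crux, is the Hadamard-product eigenvalue inequality; the differentiation and the monotonicity arguments in the PSD order are otherwise routine. I would therefore present that inequality with its short self-contained proof to keep the whole argument elementary.
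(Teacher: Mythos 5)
Your proposal is correct and follows essentially the same route as the paper: the gradient and Hessian are obtained from the standard first- and second-order derivatives of $\ldet$, and the bound $\bm H(\bm x)\succeq -\delta^2\bm I_n$ comes from an eigenvalue inequality for Hadamard products of positive semidefinite matrices combined with $\bm X\succeq \bm I_d$ (hence $\bm B^{\top}\bm X^{-1}\bm B\preceq \bm B^{\top}\bm B$). The only difference is that the paper cites Johnson (1974) for $\lambda_{\max}(\bm M\circ\bm M)\le \lambda_{\max}^2(\bm M)$, whereas you prove the (slightly sharper) inequality $\lambda_{\max}(\bm A\circ\bm B)\le (\max_i A_{ii})\,\lambda_{\max}(\bm B)$ self-containedly; both yield the same constant $\delta^2$.
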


\begin{proof}
	For any $\bm Y \in \S_{++}^d$, it is well-known that the function $\ldet(\bm Y)$ has first- and second-order derivatives: $\bm Y^{-1}$ and $-\bm Y^{-1}\partial \bm Y \bm Y^{-1} $. Thus, the gradient and Hessian of the objective function over $\bm x$ in \ref{model_dopt} can be derived.
	
	According to \cite{johnson1974hadamard}, the least eigenvalue of Hessian matrix $\bm H(\bm x)$ satisfies 
	$$\lambda_{\min}\left(\bm H(\bm x)\right) =-\lambda_{\max}\left(-\bm H(\bm x)\right) \ge - \lambda_{\max}^2(\bm B^{\top}\bm X^{-1}\bm B )\ge - \lambda_{\max}^2(\bm B^{\top}\bm B ),$$
	where the last inequality is due to $\bm X \succeq \bm I_d$.
	\qed
\end{proof}

We adopt the well-known Frank-Wolfe algorithm to compute $\hat{z}_R$ in the numerical study.
Due to \Cref{prop:dopt_lips} and \cite[Theorem 4]{li2020best}, the Frank-Wolfe algorithm admits a convergence rate of $O(\bar{s}^2\lambda_{\max}^2(\bm B^{\top}\bm B )/\kappa)$, where $\kappa$ denotes the number of iterations.

{An alternative interpretation of \ref{model} is via excluding $n-s$ ineffective data points, which leads to the complementary formulation of \ref{model_dopt} and the resulting Lagrangian dual problem below.}
\begin{restatable}{proposition}{propdoptcom}\label{prop:dopt_com}
\ref{model_dopt} is equivalent to
\begin{equation}\label{model_doptc}\tag{R-DDF-comp}
z^*= \ldet(\bm C + \bm A \bm A^{\top}) + \max_{\bm x \in \Z_{n-s}} \bigg\{\ldet \bigg(\bm{I}_d-\sum_{i \in [n]} x_i \bm{q}_i \bm{q}_i^{\top} \bigg)\bigg\},
\end{equation}
and the Lagrangian dual of the continuous relaxation of \ref{model_doptc} is
	\begin{equation} \label{eq:doptc_ld}
\hat z_R = \ldet(\bm C \!+\! \bm A \bm A^{\top}) + \min_{\substack{\bm{\Lambda} \in \S_{++}^d,\\ \nu, \bm{\mu} \in \Re^n_+}} \!\!\!\bigg\{\! -\ldet \bm{\Lambda} + \tr \bm{\Lambda} +(n\!-\!s)\nu +\sum_{i \in [n]}\mu_i-d : 
-\bm{q}_i^{\top}\bm{\Lambda}\bm{q}_i \le \nu+ \mu_i, ~ i \in [n] \bigg\}.
\end{equation}
\end{restatable}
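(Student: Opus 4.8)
The plan is to prove both claims by exploiting the complementation map $\bm y = \bm 1 - \bm x$, which interchanges the roles of the $s$ selected and the $n-s$ discarded points, together with a Lagrangian derivation that parallels \Cref{prop:dopt_ld}. Throughout I write $\bm M := \bm C + \bm A\bm A^{\top} \in \S_{++}^d$, so that $\bm q_i = \bm M^{-\frac{1}{2}}\bm a_i$.

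First I would establish the equivalence between \ref{model_dopt} and \ref{model_doptc}. Rewriting \ref{model} with the $0/1$ indicator $\bm x$ of $S$ (as in the proof of \Cref{them:dopt}) gives $z^* = \max_{\bm x \in \Z_s} \ldet(\bm C + \sum_{i\in[n]} x_i \bm a_i \bm a_i^{\top})$. Under the substitution $\bm y = \bm 1 - \bm x$, the constraint $\bm x \in \Z_s$ becomes $\bm y \in \Z_{n-s}$, and
\[
\bm C + \sum_{i\in[n]} x_i \bm a_i \bm a_i^{\top} = \bm M - \sum_{i\in[n]} y_i \bm a_i \bm a_i^{\top} = \bm M^{\frac{1}{2}}\Big(\bm I_d - \sum_{i\in[n]} y_i \bm q_i \bm q_i^{\top}\Big)\bm M^{\frac{1}{2}}.
\]
Taking $\ldet$ and using multiplicativity of the determinant produces the constant $\ldet \bm M$ plus $\ldet(\bm I_d - \sum_i y_i \bm q_i \bm q_i^{\top})$; maximizing over $\bm y \in \Z_{n-s}$ then yields \ref{model_doptc}. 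I would also note that the inner matrix is automatically positive definite on feasible points, since it equals $\bm M^{-\frac{1}{2}}(\bm C + \sum_{i\in S}\bm a_i\bm a_i^{\top})\bm M^{-\frac{1}{2}} \succeq \bm M^{-\frac{1}{2}}\bm C\,\bm M^{-\frac{1}{2}} \succ 0$, so the logarithm is well defined.

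For the Lagrangian dual of the continuous relaxation of \ref{model_doptc}, I would mirror the derivation of \Cref{prop:dopt_ld} verbatim. Introduce $\bm X \in \S_{++}^d$ and rewrite the relaxation as $\ldet \bm M + \max\{\ldet \bm X : \bm I_d - \sum_i x_i \bm q_i \bm q_i^{\top} \succeq \bm X,\ \bm 1^{\top}\bm x = n-s,\ \bm 0 \le \bm x \le \bm 1\}$. With multipliers $\bm\Lambda \in \S_{++}^d$, $\nu$, $\bm\upsilon,\bm\mu \in \Re^n_+$, the only structural change from \Cref{prop:dopt_ld} is the sign in front of $\bm q_i\bm q_i^{\top}$, so the coefficient of $x_i$ in the Lagrangian becomes $-\bm q_i^{\top}\bm\Lambda\bm q_i - \nu + \upsilon_i - \mu_i$. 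Maximizing over unconstrained $\bm x$ forces this coefficient to vanish, maximizing over $\bm X$ gives $\bm X = \bm\Lambda^{-1}$ with value $-\ldet\bm\Lambda - d$, and eliminating $\bm\upsilon \ge \bm 0$ turns the equality into $-\bm q_i^{\top}\bm\Lambda\bm q_i \le \nu + \mu_i$; this reproduces \eqref{eq:doptc_ld}.

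Finally, to justify that this dual value equals $\hat z_R$ (the same symbol as in \eqref{eq:dopt_ld}), I would show the two continuous relaxations share a common optimal value. Applying $\bm y = \bm 1 - \bm x$ at the relaxation level and using $\sum_i \bm q_i \bm q_i^{\top} = \bm I_d - \bm M^{-\frac{1}{2}}\bm C\, \bm M^{-\frac{1}{2}}$, one checks that both relaxed objectives collapse to $\max_{\bm x \in [0,1]^n,\ \bm 1^{\top}\bm x = s} \ldet(\bm C + \sum_i x_i \bm a_i \bm a_i^{\top})$, so the relaxations of \ref{model_dopt} and \ref{model_doptc} have equal optimal value. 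Since each is a concave maximization with a strictly feasible point (e.g.\ $\bm x = \frac{n-s}{n}\bm 1$ for \ref{model_doptc}, for which $\bm I_d - \sum_i x_i \bm q_i\bm q_i^{\top} = \frac{s}{n}\bm I_d + \frac{n-s}{n}\bm M^{-\frac{1}{2}}\bm C\,\bm M^{-\frac{1}{2}} \succ 0$), Slater's condition holds and strong duality equates each Lagrangian dual with its primal relaxation, so both duals carry the value $\hat z_R$. I expect this last step — verifying that the complementation preserves the relaxation value and that strong duality applies — to be the main obstacle, whereas the algebraic complementation in the first part and the mechanical dual derivation in the second are routine given \Cref{them:dopt,prop:dopt_ld}.
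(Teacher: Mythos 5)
Your proof is correct and follows essentially the same route as the paper's: complement the selection via $\bm x \mapsto \bm 1 - \bm x$, factor out $(\bm C + \bm A\bm A^{\top})^{\frac{1}{2}}$ to produce the $\bm q_i$'s, and repeat the Lagrangian derivation of \Cref{prop:dopt_ld} with the sign of $\bm q_i\bm q_i^{\top}$ flipped. Your closing Slater/strong-duality argument for why the complementary dual also carries the value $\hat z_R$ is a welcome extra: the paper leaves this implicit, merely remarking afterwards that the continuous relaxation value is invariant under the complementary transformation.
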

\begin{proof}
For any $\bm x\in \Z_s$, the objective function of \ref{model_dopt} can be written as 
\begin{align*}
&\ldet \bigg(\bm{C} + \sum_{i \in [n]} x_i\bm{a}_i\bm{a}_i^{\top}\bigg) = \ldet \bigg(\bm{C} + \bm A \bm A^{\top}- \sum_{i \in [n]} (1-x_i)\bm{a}_i\bm{a}_i^{\top}\bigg) \\
&\quad \quad = \ldet(\bm C + \bm A \bm A^{\top}) + \ldet \bigg(\bm{I}_d - \sum_{i \in [n]} (1-x_i) (\bm C + \bm A \bm A^{\top})^{-\frac{1}{2}} \bm{a}_i\bm{a}_i^{\top} (\bm C + \bm A \bm A^{\top})^{-\frac{1}{2}} \bigg).
\end{align*}
Then, replacing variable $\bm x$ by $\bm 1-\bm x$, and considering the definition of $\bm{q}_i$ (see Remark \ref{rem:notation}), we arrive at the equivalent formulation of \ref{model_dopt} given by \ref{model_doptc}.

Following the similar derivation as \Cref{prop:dopt_ld}, we can also derive the Lagrangian dual of the continuous relaxation of \ref{model_doptc}. 
\qed
\end{proof}

We remark that (i) the greatest eigenvalue of $\sum_{i\in [n]} \bm q_i\bm q_i^{\top}$ is strictly less than one, so the matrix $\bm I_d-\sum_{\in [n]}x_i \bm q_i \bm q_i^{\top}$ in \ref{model_doptc} is always positive-definite and the objective value is finite; (ii) the continuous relaxation value of \ref{model_dopt} does not vary with the complementary transformation, 
but considering the two Lagrangian dual problems \eqref{eq:dopt_ld} and \eqref{eq:doptc_ld} together, we obtain an approximation bound that is
better than by considering only one of the Lagrangian duals, as discussed in Section~\ref{sec:approx}.

\subsection{Second CIP formulation: M-DDF} \label{subsec:mesp}





\begin{restatable}{lemma}{lemnik}[\cite{nikolov2015randomized}, Lemma 13]\label{lem:iota}
 Let $\bm \lambda\in\mathbb{R}_+^n$ with $\lambda_1\geq \lambda_2\geq \cdots\geq \lambda_n$, and let $0<s\leq n$. There exists a unique integer $k$, with $0\leq k< s$, such that
 \begin{equation}\label{reslemiota}
 \lambda_{k}>\frac{1}{s-k}\sum_{i\in [k+1, n]} \lambda_{i}\geq \lambda_{k+1},
 \end{equation}
 with the convention $\lambda_0=+\infty$.
\end{restatable}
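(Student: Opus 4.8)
The plan is to recast the statement as a monotonicity argument about the tail averages. For $k\in\{0,1,\ldots,s-1\}$ I would set $\rho_k := \frac{1}{s-k}\sum_{i\in[k+1,n]}\lambda_i$, so that the target inequality \eqref{reslemiota} reads $\lambda_k > \rho_k \geq \lambda_{k+1}$. The entire argument rests on one elementary identity obtained by separating the first term from the tail: $(s-k)\rho_k = \lambda_{k+1} + (s-k-1)\rho_{k+1}$, equivalently $\rho_k = \frac{1}{s-k}\lambda_{k+1} + \frac{s-k-1}{s-k}\rho_{k+1}$, which exhibits $\rho_k$ as a convex combination of $\lambda_{k+1}$ and $\rho_{k+1}$ (the weights are nonnegative since $k\leq s-1$). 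I expect this averaging identity to be the crux of the proof; everything after it is bookkeeping around whether $\rho_k$ lands between $\lambda_{k+1}$ and $\rho_{k+1}$.

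First I would show that the ``left'' condition $\lambda_k > \rho_k$ is downward closed in $k$. Writing the identity one index lower expresses $\rho_{k-1}$ as a convex combination of $\lambda_k$ and $\rho_k$ with strictly positive weights; hence if $\lambda_k > \rho_k$, then $\rho_{k-1}$ lies strictly between them, so $\lambda_{k-1} \geq \lambda_k > \rho_{k-1}$ by the monotonicity of $\bm\lambda$, i.e.\ the left condition also holds at $k-1$. Since $\lambda_0 = +\infty > \rho_0$ by the stated convention, the set $\{k : \lambda_k > \rho_k\}$ is a nonempty prefix $\{0,1,\ldots,m\}$, and I would propose $k := m$ as the unique index.

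For existence I would verify the ``right'' inequality $\rho_m \geq \lambda_{m+1}$ at this $m$, splitting into two cases. If $m = s-1$, then $\rho_{s-1} = \sum_{i\in[s,n]}\lambda_i \geq \lambda_s$ because the remaining terms are nonnegative. If $m < s-1$, then by maximality of $m$ the left condition fails at $m+1$, so $\lambda_{m+1} \leq \rho_{m+1}$; substituting this into the convex-combination expression for $\rho_m$ places $\rho_m$ between $\lambda_{m+1}$ and $\rho_{m+1}$, yielding $\rho_m \geq \lambda_{m+1}$ as required.

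Finally, for uniqueness I would rule out every $k < m$. For such a $k$ the prefix property gives the left condition at $k+1$, namely $\lambda_{k+1} > \rho_{k+1}$; inserting this into $\rho_k = \frac{1}{s-k}\lambda_{k+1} + \frac{s-k-1}{s-k}\rho_{k+1}$ forces $\rho_k < \lambda_{k+1}$, contradicting the right inequality $\rho_k \geq \lambda_{k+1}$. Hence $m$ is the only index satisfying both parts of \eqref{reslemiota}. The only points needing care are the boundary case $k=s-1$, where the weight $\frac{s-k-1}{s-k}$ degenerates to $0$ and the averaging identity must be read as a plain tail sum, and the consistent tracking of strict versus weak inequalities; the convention $\lambda_0 = +\infty$ is exactly what seeds the prefix so that the chain of strict left-inequalities is well founded.
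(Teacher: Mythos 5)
Your proof is correct. Note, however, that the paper does not actually prove this lemma: it is imported verbatim from \cite{nikolov2015randomized} (Lemma~13 there) and used as a black box, so there is no in-paper argument to compare against. Your argument stands on its own: the identity $\rho_k=\tfrac{1}{s-k}\lambda_{k+1}+\tfrac{s-k-1}{s-k}\rho_{k+1}$ correctly exhibits each tail average as a convex combination of the next eigenvalue and the next tail average, the downward-closure of $\{k:\lambda_k>\rho_k\}$ follows from the strictly positive weights together with $\lambda_{k-1}\ge\lambda_k$, the convention $\lambda_0=+\infty$ makes the prefix nonempty, and your two-case verification of $\rho_m\ge\lambda_{m+1}$ at the maximal prefix element $m$ (plus the strict-inequality contradiction for $k<m$) correctly establishes both existence and uniqueness. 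You have also handled the two delicate points properly: the degenerate weight at $k=s-1$, where the identity must be read as $\rho_{s-1}=\sum_{i\in[s,n]}\lambda_i\ge\lambda_s$ using only nonnegativity of the $\lambda_i$, and the requirement $k\le s-2$ for the recursion to involve a well-defined $\rho_{k+1}$, which is guaranteed whenever $k<m\le s-1$. This is a clean, elementary, self-contained derivation that could replace the external citation if desired.
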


\begin{restatable}{lemma}{lemmars}\label{lem:mars}
 Let $\bm \lambda\in\mathbb{R}_+^n$ with $\lambda_1\geq \lambda_2\geq \cdots\geq
 \lambda_t> \lambda_{t+1}=\cdots=
 \lambda_s>\lambda_{s+1}=\cdots
 =\lambda_n=0$.
 Then, the $k$ satisfying
 \eqref{reslemiota} is
 precisely $t$. 
\end{restatable}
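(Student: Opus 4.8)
The plan is to leverage the uniqueness already established in \Cref{lem:iota}: that lemma guarantees a \emph{unique} integer $k$ with $0 \le k < s$ satisfying \eqref{reslemiota}. Consequently, it suffices to exhibit one valid choice of $k$ and confirm it equals $t$; uniqueness then forces that $t$ is \emph{the} value. Concretely, I would substitute $k = t$ into \eqref{reslemiota} and verify the two defining inequalities by direct computation.

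First I would evaluate the central averaging term at $k = t$. Writing $c := \lambda_{t+1} = \cdots = \lambda_s > 0$ for the common value on the middle block, I split
$$\sum_{i \in [t+1,n]} \lambda_i = \sum_{i=t+1}^{s} \lambda_i + \sum_{i=s+1}^{n} \lambda_i = (s-t)\,c + 0,$$
using that the block $[t+1,s]$ contains exactly $s-t$ entries, all equal to $c$, while every entry on $[s+1,n]$ vanishes. Hence $\frac{1}{s-t}\sum_{i\in[t+1,n]}\lambda_i = c = \lambda_{t+1}$.

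With this identity, the two inequalities of \eqref{reslemiota} at $k = t$ reduce to $\lambda_t > c$ and $c \ge \lambda_{t+1}$. The first is precisely the strict inequality $\lambda_t > \lambda_{t+1}$ in the hypothesis, and the second holds with equality since $c = \lambda_{t+1}$. I would also record the range $0 \le t < s$: the nonempty positive block $[t+1,s]$ forces $t+1 \le s$, i.e.\ $t < s$, while $t \ge 0$ is immediate. Therefore $k = t$ satisfies \eqref{reslemiota}, and uniqueness from \Cref{lem:iota} yields $k = t$.

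\textbf{Main obstacle.} This is essentially a one-line verification, so the only real care lies in the bookkeeping of the degenerate configurations: the case $t = 0$, where the left inequality is read through the convention $\lambda_0 = +\infty$; and the case $s = n$, where the zero block $[s+1,n]$ is empty and the corresponding sum is vacuously $0$. I would simply check that the sum splitting above remains valid in those boundary situations before invoking uniqueness.
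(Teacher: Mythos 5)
Your proof is correct and follows essentially the same route as the paper: the whole argument rests on the identity $\frac{1}{s-t}\sum_{i\in[t+1,n]}\lambda_i = \lambda_{t+1}$, which is exactly what the paper's one-line proof records, with uniqueness from \Cref{lem:iota} doing the rest. Your version just spells out the verification and the boundary cases more explicitly.
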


\begin{proof}
The result is immediate because
\[
\frac{1}{s-t}\sum_{i\in [t+1, n]} \lambda_{i} = \lambda_{t+1}.
\]
 \qed
\end{proof}

\begin{definition}[\cite{nikolov2015randomized}]\label{def_f}
	For a matrix $\bm X \in \S_+^n$ with eigenvalues $\lambda_1 \ge \cdots \ge \lambda_{n}\ge 0$, we let
	\begin{align*}
	f(\bm X) = \prod_{i\in [k]} \lambda_i \times \bigg(\frac{1}{s-k} \sum_{i\in [k+1, n]} \lambda_i\bigg)^{s-k},
	\end{align*}
	where $k<s$ is unique non-negative integer satisfying
 \eqref{reslemiota}.
\end{definition}
It has been shown in \cite{nikolov2015randomized} that $f(\cdot)$ is a concave function. 
With the notation above, we are ready to define \ref{model_mesp}.
\begin{restatable}{theorem}{themmesp} \label{them:mesp}
	\ref{model} is equivalent to the following CIP formulation
	\begin{align}\label{model_mesp}\tag{M-DDF}
	& z^{*} = \ldet \bm C + 
	\max_{\bm{x} \in \Z_s} \bigg \{ \log f \bigg(\sum_{i\in [n]}x_i \bm{v}_i\bm{v}_i^{\top}\bigg) \bigg \}.
	\end{align}

\end{restatable}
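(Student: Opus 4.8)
The plan is to reduce \ref{model_mesp} to the already-established \ref{model_dopt} formulation by proving a pointwise identity: for every feasible $\bm x \in \Z_s$ with support $S$, the two objective terms agree,
\[
\ldet\left(\bm I_d + \sum_{i \in [n]} x_i \bm b_i \bm b_i^\top\right) = \log f\left(\sum_{i \in [n]} x_i \bm v_i \bm v_i^\top\right).
\]
Once this holds for all $\bm x \in \Z_s$, taking the maximum over $\Z_s$ on both sides and invoking \Cref{them:dopt} yields the claimed equivalence with $z^*$.

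First I would rewrite the left-hand side in terms of $\bm V$. Using the Sylvester-type determinant identity already exploited in \eqref{eqmesp}, $\ldet(\bm I_d + \sum_{i \in S}\bm b_i \bm b_i^\top) = \ldet(\bm I_n + \bm B^\top \bm B)_{S,S}$. Since $\bm V^\top \bm V$ is the Cholesky factorization of $\bm I_n + \bm B^\top \bm B$ (see \Cref{rem:notation}), the relevant principal submatrix is a Gram matrix, $(\bm I_n + \bm B^\top \bm B)_{S,S} = \bm V_S^\top \bm V_S$, where $\bm V_S$ is the $n \times s$ column submatrix of $\bm V$. Hence the left-hand side equals $\ldet(\bm V_S^\top \bm V_S)$, the sum of the logarithms of the $s$ eigenvalues of $\bm V_S^\top \bm V_S$.

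Next I would analyze the right-hand side. The matrix inside $f$ is $\sum_{i \in S}\bm v_i \bm v_i^\top = \bm V_S \bm V_S^\top \in \S_+^n$, which shares the same nonzero spectrum as $\bm V_S^\top \bm V_S$. Because $\bm V$ is invertible (being the Cholesky factor of a positive-definite matrix), $\bm V_S$ has full column rank, so $\bm V_S \bm V_S^\top$ has exactly $s$ positive eigenvalues $\lambda_1 \ge \cdots \ge \lambda_s > 0$ together with $n - s$ zero eigenvalues. Writing this spectrum in the form required by \Cref{lem:mars}, with $t$ the number of eigenvalues strictly exceeding $\lambda_s$, \Cref{lem:mars} identifies the integer $k$ of \Cref{def_f} as exactly $t$. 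The tail average in \eqref{reslemiota} then collapses, $\frac{1}{s-t}\sum_{i \in [t+1,n]}\lambda_i = \lambda_{t+1} = \cdots = \lambda_s$, so that
\[
f\left(\bm V_S \bm V_S^\top\right) = \prod_{i \in [t]}\lambda_i \cdot \lambda_{t+1}^{\,s-t} = \prod_{i \in [s]}\lambda_i = \det\left(\bm V_S^\top \bm V_S\right).
\]
Taking logarithms matches the two objective terms and finishes the pointwise identity.

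The main obstacle I anticipate is the spectral bookkeeping in this last step: one must confirm that the integer $k$ selected by \eqref{reslemiota} for a rank-$s$ PSD matrix is precisely the count $t$ of eigenvalues exceeding the smallest positive one. The point is that the averaging inequality in \eqref{reslemiota} forces $\lambda_{k+1} = \cdots = \lambda_s$, because the arithmetic mean of $\lambda_{k+1},\ldots,\lambda_s$ can be at least its largest member $\lambda_{k+1}$ only when all these values coincide; this is exactly the content of \Cref{lem:mars}, after which the product collapse is routine. Secondary care is needed to certify that $\bm V_S \bm V_S^\top$ has rank exactly $s$—so that $f$ indeed sees $s$ positive eigenvalues—which follows from the invertibility of the Cholesky factor $\bm V$.
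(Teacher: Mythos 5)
Your proposal is correct and follows essentially the same route as the paper's proof: the same chain of identities $\ldet(\bm I_d + \sum_{i\in S}\bm b_i\bm b_i^\top) = \ldet(\bm I_n + \bm B^\top\bm B)_{S,S} = \ldet(\bm V_S^\top\bm V_S) = \log f(\sum_{i\in S}\bm v_i\bm v_i^\top)$, with \Cref{lem:mars} doing the work of collapsing $f$ on a rank-$s$ PSD matrix to the product of its $s$ positive eigenvalues. Your write-up is in fact more careful than the paper's on the final step, explicitly verifying that $\bm V_S$ has full column rank (so $f$ sees exactly $s$ positive eigenvalues) and that the integer $k$ in \Cref{def_f} equals the count $t$ of eigenvalues exceeding the smallest positive one.
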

\begin{proof}
	It is sufficient to prove that the objective functions of \ref{model_mesp} and \ref{model_dopt} are equal for any 
	$\bm{x}\in \Z_s$. 
	So, let $S:=\supp({\bm x})$ for an $\bm{x}\in \Z_s$.
	Clearly $|S|=s$, and thus we have
	\begin{align*}
	\det \bigg(\bm{I}_d+\sum_{i \in [n]}x_i \bm{b}_i\bm{b}_i^{\top}\bigg)
	&	= \det (\bm{I}_d+ \bm{B}_{S}\bm{B}_S^{\top})	 = \det (\bm{I}_{s} + \bm{B}_{S}^{\top}\bm{B}_S) = \det (\bm{I}_n+\bm B^{\top} \bm B)_{S,S} \\
	&	= \det(\bm V_S^{\top} \bm V_S) = f \bigg(\sum_{i \in [n]} x_i \bm v_i \bm v_i^{\top}\bigg),
	\end{align*}
	where 
	the second equation is because $\bm{B}_{S}\bm{B}_S^{\top}$ and $\bm{B}_{S}^{\top}\bm{B}_S=( \bm B^{\top}\bm B)_{S,S}$ have the same non-zero eigenvalues, 
	and the last one follows from the definition of $\bm{v}_i$ (see Remark \ref{rem:notation}) and from 
	\Cref{lem:mars}. This completes the proof. \qed
\end{proof}

The matrix in the objective of \ref{model_mesp} is of order $n$, while that of \ref{model_dopt} is of order $d$. We may consider choosing between them in practice, based on the two parameters $n$ and $d$ and on the continuous-relaxation bounds. 


\begin{restatable}{proposition}{propmespld}[\cite{li2020best}] \label{prop:mesp_ld}
The Lagrangian dual of 
	the continuous relaxation of \ref{model_mesp} is 
	\begin{align}\label{eq:mesp_ld}
	\hat z_{M} := \ldet \bm C + \min_{\substack{\bm \Lambda \in \S_{++}^n,\\ \nu, \bm{\mu} \in \Re^n_+}} \bigg \{ -\log\det\limits_s \bm \Lambda + s\nu + \sum_{i\in [n]}\mu_i -s: \bm v_i^{\top}\bm \Lambda \bm v_i \le \nu + \mu_i, ~ i \in [n] \bigg \},
	\end{align}
where the function $\det \limits_s(\cdot)$ denotes the product of $s$ least eigenvalues.
\end{restatable}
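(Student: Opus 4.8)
The plan is to mirror the derivation of \Cref{prop:dopt_ld}, replacing the role played there by Fenchel duality for $\ldet$ with a corresponding variational representation of the spectral function $\log f$ from \Cref{def_f}. The continuous relaxation of \ref{model_mesp} maximizes the concave map $\bm x \mapsto \log f(\sum_{i\in[n]} x_i \bm v_i \bm v_i^\top)$ over the polytope $\mathrm{conv}(\Z_s) = \{\bm x\in[0,1]^n : \bm 1^\top \bm x = s\}$, so the whole argument reduces to dualizing this concave maximization.

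First I would establish the key variational identity
\[
\log f(\bm M) = \min_{\bm\Lambda\in\S^n_{++}} \left\{ \tr(\bm M \bm\Lambda) - \log\det\nolimits_s \bm\Lambda \right\} - s, \qquad \bm M \in \S^n_{++},
\]
which is the $\det\nolimits_s$-analogue of the classical identity $\log\det\bm M = \min_{\bm\Lambda\succ 0}\{\tr(\bm M\bm\Lambda) - \log\det\bm\Lambda\} - n$ used implicitly in \Cref{prop:dopt_ld}. Since $\log f$ and $\log\det\nolimits_s$ are both orthogonally invariant spectral functions, this identity follows by computing the concave Fenchel conjugate of the underlying symmetric eigenvalue function $\phi(\bm\lambda) = \sum_{i\in[k]}\log\lambda_i + (s-k)\log(\frac{1}{s-k}\sum_{i\in[k+1,n]}\lambda_i)$ and then lifting it to matrices through von Neumann's trace inequality. \Cref{lem:iota} and \Cref{lem:mars}, which pin down the breakpoint index $k$, are exactly what is needed to handle the nonsmoothness of $\phi$ at eigenvalue ties when carrying out this conjugate computation.

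Next I would substitute this identity into the relaxed objective and exchange the order of the resulting $\max_{\bm x}$ and $\min_{\bm\Lambda}$. Because $\tr(\bm\Lambda \sum_i x_i \bm v_i\bm v_i^\top) = \sum_{i\in[n]} x_i\, \bm v_i^\top\bm\Lambda\bm v_i$ is affine in $\bm x$ while $-\log\det\nolimits_s\bm\Lambda$ is convex in $\bm\Lambda$, and $\mathrm{conv}(\Z_s)$ is compact and convex, Sion's minimax theorem justifies the swap. This leaves the inner linear program $\max_{\bm x}\{\sum_i x_i (\bm v_i^\top\bm\Lambda\bm v_i): \bm x\in[0,1]^n,\ \bm 1^\top\bm x = s\}$; taking its LP dual with a multiplier $\nu\in\Re$ for the cardinality equation and $\bm\mu\in\Re^n_+$ for the upper bounds $x_i\le 1$ produces $\min_{\nu,\bm\mu\ge 0}\{s\nu + \sum_i\mu_i : \nu+\mu_i \ge \bm v_i^\top\bm\Lambda\bm v_i,\ i\in[n]\}$. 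Combining this with the outer minimization over $\bm\Lambda$ and the additive constant $\ldet\bm C$ yields exactly \eqref{eq:mesp_ld}.

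The main obstacle is the first step, establishing the conjugate identity for $\log f$. The function $f$ is defined piecewise through the data-dependent index $k$, so its gradient is not globally smooth, and one must verify both that the minimizing $\bm\Lambda$ exists in $\S^n_{++}$ and that only the $s$ least eigenvalues of $\bm\Lambda$ enter the penalty $\log\det\nolimits_s\bm\Lambda$, with the remaining eigenvalues controlled entirely through the affine term $\tr(\bm M\bm\Lambda)$ and hence through the constraints $\bm v_i^\top\bm\Lambda\bm v_i\le\nu+\mu_i$ after dualization. Once this spectral-conjugate step is in place, the remaining minimax and LP-duality steps are routine and parallel to \Cref{prop:dopt_ld}. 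Since this result is already available in \cite{li2020best}, an acceptable alternative is to invoke their derivation directly rather than reprove the conjugate identity here.
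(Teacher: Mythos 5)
The paper states \Cref{prop:mesp_ld} without proof, importing it directly from \cite{li2020best}, so there is no in-paper argument to compare against; your outline is a correct reconstruction of the derivation in that reference, and it correctly mirrors the structure of the paper's own proof of \Cref{prop:dopt_ld} (variational representation of the objective, minimax swap, LP dual of the inner cardinality-constrained problem). Two small points are worth tightening if you were to write this out in full. First, the conjugate identity should be stated for $\bm M\in\S_+^n$ of rank at least $s$ rather than for $\bm M\in\S_{++}^n$: at integral points $\bm x\in\Z_s$ the matrix $\sum_{i\in[n]}x_i\bm v_i\bm v_i^{\top}$ has rank exactly $s<n$, so positive definiteness fails precisely where the relaxation must be tight; the identity does extend to this case (with the minimizing $\bm\Lambda$ escaping to the boundary or the minimum becoming an infimum), and this is handled in \cite{li2020best}. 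Second, your identity is easy to certify in one direction: for $\bm M$ with eigenvalues $\lambda_1\ge\cdots\ge\lambda_n$ and $k$ as in \Cref{lem:iota}, the choice $\bm\Lambda$ with eigenvalues $1/\lambda_i$ for $i\in[k]$ and $(s-k)/\sum_{i\in[k+1,n]}\lambda_i$ otherwise (in the eigenbasis of $\bm M$) gives $\tr(\bm M\bm\Lambda)=s$ and $-\log\det\nolimits_s\bm\Lambda=\log f(\bm M)$, so the real content is the Fenchel--Young-type lower bound, which is exactly what the cited reference proves. Given that the proposition is explicitly attributed to \cite{li2020best}, your closing suggestion to simply invoke that derivation is what the paper itself does.
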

We note that the continuous relaxation value $\hat z_{M}$ of \ref{model_mesp} can be computed efficiently via the Frank-Wolfe algorithm (see \cite{li2020best}).
Similar to \ref{model_dopt}, \ref{model_mesp} also admits a complementary formulation, which has been widely-studied in the \ref{MESP} literature (e.g., see \cite{AFLW_IPCO,AFLW_Using,anstreicher2020efficient,Mixing,chen2022computing,FL2022}).

According to the identity 
$$\det (\bm{I}+\bm B^{\top} \bm B)_{S,S} = \det (\bm{I}_n+\bm B^{\top} \bm B) \det ((\bm{I}_n+\bm B^{\top} \bm B)^{-1})_{[n]\setminus S,[n]\setminus S}$$ 
for any subset $S\subseteq [n]$, see \cite[Section 0.8.4]{johnson1985matrix}, we can derive the complementary formulation for \ref{model_mesp} and then the Lagrangian 
dual of its continuous relaxation:
\begin{restatable}{proposition}{propmespcom}\label{prop:mesp_com}
	 \ref{model_mesp} is equivalent to
	\begin{align}\label{model_mespc}\tag{M-DDF-comp}
	z^*= \ldet(\bm C + \bm A \bm A^{\top}) + \max_{\bm x \in \Z_{n-s}} \bigg\{f \bigg(\sum_{i \in [n]} x_i \bm{v}_i \bm{v}_i^{\top} \bigg)\bigg\},
	\end{align}
	and the Lagrangian dual of the continuous relaxation of \ref{model_mespc} is
	\begin{equation}
		\label{eq:mespc_ld}
\hat z_{M}^c := \ldet(\bm C \!+\! \bm A \bm A^{\top}) + \!\! \min_{\substack{\bm \Lambda \in \S_{++}^n,\\ \nu, \bm{\mu} \in \Re^n_+}} \!\bigg \{ \!-\log \det \limits_{n-s}\bm \Lambda + (n\!-\!s)\nu + \!\sum_{i\in [n]}\mu_i -(n\!-\!s): 
\bm v_i^{\top}\bm \Lambda \bm v_i \le \nu + \mu_i, ~ i \in [n] \bigg \}.
	\end{equation}
\end{restatable}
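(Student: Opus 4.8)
The plan is to establish the complementary formulation \ref{model_mespc} at the level of determinants, exactly paralleling \Cref{prop:dopt_com}, and then to obtain its Lagrangian dual \eqref{eq:mespc_ld} by re-running the derivation of \Cref{prop:mesp_ld} with the cardinality $s$ replaced by $n-s$. Throughout I write $\bm M := \bm I_n + \bm B^\top \bm B = \bm V^\top \bm V$, the positive-definite matrix whose principal minors drive \ref{model_mesp}.

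First I would begin from the representation in \Cref{them:mesp}: for $\bm x \in \Z_s$ with $S := \supp(\bm x)$, the inner objective of \ref{model_mesp} equals $\ldet \bm M_{S,S}$. Setting $T := [n]\setminus S$, so that $|T| = n-s$, I apply the principal-minor identity quoted before the statement, $\det \bm M_{S,S} = \det \bm M \cdot \det(\bm M^{-1})_{T,T}$, and take logarithms. The term $\ldet \bm M$ is a constant independent of $S$; by Sylvester's identity $\det \bm M = \det(\bm I_d + \bm B \bm B^\top) = \det(\bm C + \bm A \bm A^\top)/\det \bm C$, whence $\ldet \bm C + \ldet \bm M = \ldet(\bm C + \bm A \bm A^\top)$. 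This reproduces the constant appearing in \ref{model_mespc} and reduces the problem to $\max_{|T| = n-s} \ldet(\bm M^{-1})_{T,T}$.

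Next I would recognize the remaining term as a maximum-entropy-sampling objective for the positive-definite covariance $\bm M^{-1}$ at cardinality $n-s$, and reuse the argument of \Cref{them:mesp} verbatim. Factoring $\bm M^{-1} = \bm U^\top \bm U$ and denoting by $\bm u_i$ the $i$-th column of $\bm U$, \Cref{lem:mars} certifies that, on integral points, $\det(\bm M^{-1})_{T,T} = \det(\bm U_T^\top \bm U_T) = f(\sum_{i \in [n]} x_i \bm u_i \bm u_i^\top)$, where $f$ is now instantiated with parameter $n-s$ and $\bm x$ is the characteristic vector of $T$. Substituting $\bm x \mapsto \bm 1 - \bm x$ as in \Cref{prop:dopt_com} turns the maximization over $T$ into one over $\bm x \in \Z_{n-s}$ and yields \ref{model_mespc}; here the role played by $\bm v_i$ in \ref{model_mesp} is taken by the columns $\bm u_i$ of the factor of $\bm M^{-1}$.

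Finally, for \eqref{eq:mespc_ld} I would repeat the Lagrangian computation of \Cref{prop:mesp_ld} on \ref{model_mespc}. Since $f$ is concave \cite{nikolov2015randomized} and $\log$ is concave and increasing, the relaxed objective $\log f$ is concave, so Slater's condition gives strong duality; dualizing the cardinality constraint with $\nu$ and the upper bounds with $\bm\mu$, and using the conjugate of $-\log\det_{n-s}$, produces \eqref{eq:mespc_ld}, which differs from \eqref{eq:mesp_ld} only by $s \mapsto n-s$ and $\det_s \mapsto \det_{n-s}$. I expect the only real obstacle to be bookkeeping rather than conceptual: one must re-instantiate $f$ and the defining integer $k$ of \eqref{reslemiota} at the complementary cardinality $n-s$, so that \Cref{lem:mars} still pins down $f(\cdot) = \det(\bm M^{-1})_{T,T}$ exactly on $\Z_{n-s}$; once this is in place, every step transfers directly from the non-complementary case.
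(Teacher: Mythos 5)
Your proposal is correct and follows exactly the route the paper intends: the principal-minor identity $\det \bm M_{S,S} = \det \bm M \cdot \det(\bm M^{-1})_{[n]\setminus S,[n]\setminus S}$ quoted before the proposition, the Sylvester/Schur step $\ldet \bm C + \ldet(\bm I_n + \bm B^\top \bm B) = \ldet(\bm C + \bm A\bm A^\top)$, the reuse of \Cref{lem:mars} to identify the residual term with $f$ at cardinality $n-s$, and a rerun of the Lagrangian computation of \Cref{prop:mesp_ld} with $s$ replaced by $n-s$; the paper itself supplies no written proof beyond this pointer, so you have filled in the details faithfully. One remark worth keeping: you are right that the vectors appearing in \ref{model_mespc} must be the columns $\bm u_i$ of a factor of $(\bm I_n + \bm B^\top\bm B)^{-1}$ rather than the $\bm v_i$ of \Cref{rem:notation} --- with the literal $\bm v_i$ the objective would evaluate to $\ldet \bm M_{T,T}$ instead of $\ldet(\bm M^{-1})_{T,T}$ and the identity would fail --- so your reading is the correct interpretation of the statement's (overloaded) notation.
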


Contrary to \Cref{prop:dopt_com}, we will observe in our numerical study in Section \ref{sec:pmu}, that the Lagrangian dual problem \eqref{eq:mesp_ld} is not equivalent to \eqref{eq:mespc_ld}. Furthermore, the complementary problem \ref{model_mespc} and the Lagrangian dual \eqref{eq:mespc_ld} motivate us to further improve the approximation bounds of the local-search and sampling algorithms in \cite{li2020best}, as shown in the next section.

The two alternative formulations \ref{model_dopt} and \ref{model_mesp}, together with their complementary problems, often provide us with distinct continuous-relaxation solutions, which can help improve the analyses of the approximation algorithms. 
Each formulation has its own advantage under different circumstances. For example, if existing data contain more accurate information, i.e., if the existing FIM $\bm C$ dominates the overall FIM matrix, then we recommend \ref{model_dopt} because its continuous relaxation provides a tighter upper bound. On the other hand, if the information from new data points is more valuable, i.e., the effect of $\bm C$ is negligible, then \ref{model_mesp} tends to yield a stronger continuous relaxation bound. Our theoretical analyses and numerical study will further confirm these phenomena.

\section{Two approximation algorithms for \ref{model}} \label{sec:approx}
Motivated by our two CIP formulations of \ref{model}, we investigate simple and scalable approximation algorithms (i.e., local-search and randomized-sampling algorithms) for providing near-optimal selections of the new data points. 



\subsection{A local-search algorithm}
In this subsection, we study a local-search algorithm for \ref{model}, which has been successfully applied to many combinatorial optimization problems (see, for example, \cite{singh2020approximation,li2020best,li2020exact}). 
The algorithm runs as follows: (i) first, we start with a cardinality-$s$ subset $\hat{S}\subseteq [n]$; (ii) next, we swap one element from the set $\hat{S}$ with one from the unchosen set $[n]\setminus\hat{S}$, and we update the chosen set 
if the swapping strictly increases the objective value; and (iii) the algorithm terminates when there is no improvement. Motivated by \ref{model_dopt}, we provide an efficient implementation of the local-search algorithm, as shown in Algorithm \ref{algo:localsearch}, with time complexity of $O(d^2s(n-s))$ at the for-loop (i.e., Steps 5-11). Specifically, at Step 6, the strict improvement $\det(\bm I_d + \bm X - \bm b_i \bm b_i^{\top}+ \bm b_j \bm b_j^{\top}) > \det(\bm I_d + \bm X)$, can be efficiently computed as
		\begin{equation} \label{eq:dopt_local}
\begin{aligned}
& \det(\bm{I}_d + \bm{X} - \bm{b}_i\bm{b}_i^{\top} +\bm{b}_j\bm{b}_j^{\top}) > \det(\bm{I}_d + \bm{X})	\\
\Longleftrightarrow & \det(\bm{I}_d + \bm{X}) (1+\bm b_i^{\top} \bm \Lambda \bm b_i)\left[1+\bm b_j^{\top} (\bm{I}_d + \bm{X} - \bm{b}_i\bm{b}_i^{\top} )^{-1} \bm b_j\right] > \det(\bm{I}_d + \bm{X}) \\
\Longleftrightarrow 	& 	\bm{b}_j^{\top} \bm{\Lambda} \bm{b}_j - \bm{b}_i^{\top} \bm{\Lambda} \bm{b}_i \bm{b}_j^{\top} \bm{\Lambda} \bm{b}_j + \bm{b}_i^{\top} \bm{\Lambda} \bm{b}_j \bm{b}_j^{\top} \bm{\Lambda} \bm{b}_i > \bm{b}_i^{\top} \bm{\Lambda} \bm{b}_i, 
\end{aligned}
\end{equation}
which follows from the Sherman–Morrison formula. The update of matrix $\bm \Lambda$ at Step 8 also follows from Sherman–Morrison formula, to avoid calculations of inverses from scratch.

\begin{algorithm}[ht]
	\caption{Local-Search Algorithm} \label{algo:localsearch}
	\begin{algorithmic}[1]
		\STATE \textbf{Input:} vectors $\{\bm{b}_i \in \Re^n\}_{i \in [n]}$ and a positive integer $s\in [n]$
		\STATE Initialize a cardinality-$s$ subset $\hat{S} \subseteq [n]$, matrix $\bm X := \sum_{i\in \hat{S}} \bm b_i \bm b_i^{\top}$, and matrix $\bm \Lambda := (\bm I_d + \bm X)^{-1}$
		\STATE Compute $\bm b_l^{\top} \bm \Lambda \bm b_l$ for each $j\in [n]$ and $\bm b_i^{\top} \bm \Lambda \bm b_j$ for each $i\in \hat{S}$, $j\in [n]\setminus \hat{S}$
		\STATE		\textbf{do}
		\STATE 	\quad \textbf{for} {each pair {$(i,j) \in \hat{S} \times ([n]\setminus \hat{S})$}}
		\STATE	\quad \quad 	\textbf{if}	{ $\bm b_j^{\top} \bm \Lambda \bm b_j- \bm b_i^{\top} \bm \Lambda \bm b_i \bm b_j^{\top} \bm \Lambda \bm b_j + (\bm b_i^{\top} \bm \Lambda \bm b_j)^2 > \bm b_i^{\top} \bm \Lambda \bm b_i$}
		\STATE \quad \quad \quad Update $\hat{S} := \hat{S} \cup \{j\} \setminus \{i\}$ and $\bm X := \bm X - \bm{b}_i\bm{b}_i^{\top} +\bm{b}_j\bm{b}_j^{\top}$
		\STATE \quad \quad \quad Compute $\bm \Lambda_i:=\bm \Lambda + \frac{\bm \Lambda \bm b_i \bm b_i^{\top} \bm \Lambda}{1-\bm b_i^{\top} \bm \Lambda \bm b_i}$, and update $\bm \Lambda:= \bm \Lambda_i - \frac{\bm \Lambda_i\bm b_j \bm b_j^{\top} \Lambda_i}{1+\bm b_j^{\top} \bm \Lambda_i \bm b_j}$
		\STATE \quad \quad \quad Update $ \bm b_l^{\top} \bm \Lambda \bm b_l$ for each $l\in [n]$, and update $\bm b_{i'}^{\top} \bm \Lambda\bm b_{j'}$ for each $i'\in \hat{S}$, $j'\in [n]\setminus \hat{S}$
		\STATE	\quad \quad 	\textbf{end if}	
		\STATE	\quad 	\textbf{end for}	
		\STATE		\textbf{while} {there is still an improvement (i.e., Step 6 is true for some pair $(i,j)$)}
		\STATE \textbf{Output:} $\hat S$
	\end{algorithmic}
\end{algorithm}


We use the proposed Lagrangian-dual of \ref{model_dopt}, \ref{model_mesp}, and their complements \ref{model_doptc}, \ref{model_mespc} to theoretically guarantee the quality of the output of \Cref{algo:localsearch} by constructing feasible dual solutions: 

\begin{restatable}{theorem}{themlocal}\label{them:local}
	Let $\hat{S}$ denote the output of the local-search Algorithm \ref{algo:localsearch}, and let $\bar{s}:=\min\{s, n-s\}$, then the set $\hat{S}$ yields a $\min\{d\log(1+(\bar{s}/d)\sigma_{\max}^2/(1+\sigma_{\max} )), \bar{s} \log(\bar{s})\}$-approximation bound for \ref{model}, i.e., 
	\begin{align*}
&	\ldet\bigg(\bm{C}+\sum_{i \in \hat{S}} \bm{a}_i \bm{a}_i^{\top} \bigg) \ge 	 z^*- \min\bigg\{d\log\bigg(1+ \frac{\bar{s}\sigma_{\max}^2}{d(1+
		\sigma_{\max})}\bigg), \bar{s} \log(\bar{s}) \bigg \},
	\end{align*}
	where the constant $\sigma_{\max}:=\max_{i\in [n]} \bm a_i^{\top} \bm C^{-1}\bm a_i$.
\end{restatable}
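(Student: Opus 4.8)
The plan is to establish the two additive guarantees separately and take their minimum: the $d\log(1+\bar s\sigma_{\max}^2/(d(1+\sigma_{\max})))$ bound from \ref{model_dopt}/\eqref{eq:dopt_ld}, and the $\bar s\log\bar s$ bound from \ref{model_mesp}/\eqref{eq:mesp_ld}. In each case the engine is weak Lagrangian duality: since \eqref{eq:dopt_ld} and \eqref{eq:mesp_ld} are minimization duals of the (maximization) continuous relaxations, every dual-feasible point upper-bounds $z^*$. Hence it suffices to convert the local optimum $\hat S$ into a dual-feasible triple $(\bm\Lambda,\nu,\bm\mu)$ whose objective exceeds the primal value $\ldet(\bm C+\sum_{i\in\hat S}\bm a_i\bm a_i^\top)$ by no more than the claimed slack, and then to repeat the construction on the complementary duals \eqref{eq:doptc_ld} and \eqref{eq:mespc_ld} to swap $s$ for $n-s$ and obtain $\bar s$.

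For the \ref{model_dopt} bound I set $\bm\Lambda_0:=(\bm I_d+\bm X)^{-1}$, $\bm X:=\sum_{i\in\hat S}\bm b_i\bm b_i^\top$, and $\ell_i:=\bm b_i^\top\bm\Lambda_0\bm b_i$, so that the primal value equals $\ldet\bm C-\ldet\bm\Lambda_0$. The only structural input is the local-optimality inequality \eqref{eq:dopt_local}, namely $\ell_j-\ell_i\ell_j+(\bm b_i^\top\bm\Lambda_0\bm b_j)^2\le\ell_i$ for all $i\in\hat S$, $j\notin\hat S$. Two consequences drive the argument: dropping the cross term gives the per-pair bound $\ell_j\le\ell_i/(1-\ell_i)$, while summing over $i\in\hat S$ and using $\sum_{i\in\hat S}\bm b_i\bm b_i^\top=\bm X$, $\bm\Lambda_0\bm X\bm\Lambda_0=\bm\Lambda_0-\bm\Lambda_0^2\preceq\bm\Lambda_0$, and $T:=\sum_{i\in\hat S}\ell_i=d-\tr\bm\Lambda_0$ yields the aggregate bound $\ell_j\le T/(s-T)$ on the out-of-set leverage scores. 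I then feed the scaled matrix $\bm\Lambda=c\,\bm\Lambda_0$ into \eqref{eq:dopt_ld}, pick the threshold $\nu$ at $c\,T/(s-T)$ so the out-of-set constraints hold with $\mu_j=0$, and set $\mu_i=(c\ell_i-\nu)_+$. Since $-\ldet(c\bm\Lambda_0)=-d\log c-\ldet\bm\Lambda_0$, the duality gap is $-d\log c+cA-d$ with $A:=\tr\bm\Lambda_0+sT/(s-T)+\sum_{i\in\hat S}(\ell_i-T/(s-T))_+$; optimizing the scalar at $c=d/A$ collapses the gap to exactly $d\log(A/d)$. It then remains to show $A/d\le 1+s\sigma_{\max}^2/(d(1+\sigma_{\max}))$, for which the key ingredient is the uniform cap $\ell_i\le\|\bm b_i\|^2/(1+\|\bm b_i\|^2)\le\sigma_{\max}/(1+\sigma_{\max})$, valid because $\bm X\succeq\bm b_i\bm b_i^\top$ forces $\bm\Lambda_0\preceq(\bm I_d+\bm b_i\bm b_i^\top)^{-1}$ and $\|\bm b_i\|^2=\bm a_i^\top\bm C^{-1}\bm a_i\le\sigma_{\max}$.

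The $\bar s\log\bar s$ bound is obtained by mirroring this dual-certificate construction on the MESP-type dual \eqref{eq:mesp_ld}, now with the vectors $\bm v_i$ (where $\bm V^\top\bm V=\bm I_n+\bm B^\top\bm B$) and the product-of-least-$s$-eigenvalues function $\det_s$ in place of $\ldet$; because the objectives of \ref{model_dopt} and \ref{model_mesp} agree on $\Z_s$, the same local optimum $\hat S$ is admissible, and I would adapt the MESP local-search analysis of \cite{li2020best} to certify a gap of at most $s\log s$. In both parts, running the identical argument on the complementary formulations \ref{model_doptc}/\eqref{eq:doptc_ld} and \ref{model_mespc}/\eqref{eq:mespc_ld}, where the relevant vectors $\bm q_i=(\bm C+\bm A\bm A^\top)^{-\frac{1}{2}}\bm a_i$ still satisfy $\|\bm q_i\|^2\le\sigma_{\max}$, replaces $s$ by $n-s$; taking the smaller of the two certificates yields the $\bar s$ in the final bound, and taking the minimum of the two parts yields the stated $\min\{\cdot,\cdot\}$.

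The hard part is the final estimate $A/d\le 1+\bar s\sigma_{\max}^2/(d(1+\sigma_{\max}))$ in the \ref{model_dopt} case. The clean algebra $A-d=T^2/(s-T)+\sum_{i\in\hat S}(\ell_i-T/(s-T))_+$ shows that the uniform configuration $\ell_i\equiv\sigma_{\max}/(1+\sigma_{\max})$ produces no excess and hits the target constant exactly, but a naive bound using only the aggregate inequality $\ell_j\le T/(s-T)$ is insufficient when the $\ell_i$ are spread out, since the excess term can then dominate. Controlling that excess is precisely where the per-pair constraints $\ell_j\le\ell_i/(1-\ell_i)$ must re-enter: a small in-set score $\ell_i$ forces every out-of-set score to be correspondingly small, ruling out the bad configurations. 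Making this trade-off quantitative—and similarly handling the sign change in the complementary determinant $\ldet(\bm I_d-\sum_{i\in[n]} x_i\bm q_i\bm q_i^\top)$—is the step I expect to require the most care.
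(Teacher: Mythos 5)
Your overall architecture (weak duality on \eqref{eq:dopt_ld} and \eqref{eq:mesp_ld}, a dual certificate built from the local optimum, and a pass over the complementary formulations to replace $s$ by $n-s$) is exactly the paper's, and your ingredients --- the per-pair inequality $\ell_j\le \ell_i/(1-\ell_i)$ with $\ell_i:=\bm b_i^{\top}\bm\Lambda_0\bm b_i$, and the uniform cap $\ell_i\le\sigma_{\max}/(1+\sigma_{\max})$ --- are the right ones. But the R-DDF certificate you actually construct does not certify the claimed bound, and you flag this yourself: the final estimate $A/d\le 1+\bar s\sigma_{\max}^2/(d(1+\sigma_{\max}))$ is left open, and for your choice of multipliers it genuinely fails. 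With $\nu=c\,T/(s-T)$ and $\mu_i=(c\ell_i-\nu)_+$, take $s=2$, $\ell_1\approx\sigma_{\max}/(1+\sigma_{\max})=:\sigma$ and $\ell_2\approx 0$ (nothing in the local-optimality conditions forbids this, since they only couple in-set to out-of-set scores); then $A-d\approx\sigma/(2-\sigma)$, while the target is $2\sigma^2/(1-\sigma)$, and the former exceeds the latter for small $\sigma_{\max}$. So the excess term you worry about really does dominate, and the "trade-off" you defer is not a routine computation --- it is the missing step.

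The repair is a different, simpler certificate, which is what the paper uses. Combining your two ingredients gives $\ell_j\le(1+\sigma_{\max})\,\ell_i$ for \emph{every} pair $i\in\hat S$, $j\notin\hat S$. Set $\nu=c(1+\sigma_{\max})\min_{i\in\hat S}\ell_i$ and, for $i\in\hat S$, $\mu_i=c(1+\sigma_{\max})\ell_i-\nu\ge 0$ exactly (no positive part), with $\mu_j=0$ off $\hat S$. Feasibility is immediate, and now $s\nu+\sum_{i}\mu_i=c(1+\sigma_{\max})T$, so your quantity becomes $A=\tr\bm\Lambda_0+(1+\sigma_{\max})T=d+\sigma_{\max}T\le d+s\sigma_{\max}^2/(1+\sigma_{\max})$ using $T\le s\sigma_{\max}/(1+\sigma_{\max})$; your own optimization $c=d/A$ then yields the stated $d\log(1+s\sigma_{\max}^2/(d(1+\sigma_{\max})))$ at once. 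The complementary R-DDF pass additionally needs $\bm q_j^{\top}\bm\Lambda\bm q_j=\bm a_j^{\top}\bigl(\bm C+\bm A\bm A^{\top}-\bm A_{[n]\setminus\hat S}\bm A_{[n]\setminus\hat S}^{\top}\bigr)^{-1}\bm a_j\le\sigma_{\max}/(1+\sigma_{\max})$, which you only gesture at and which requires handling the sign flip in $\ldet(\bm I_d-\sum_i x_i\bm q_i\bm q_i^{\top})$. For the $\bar s\log\bar s$ half, "adapt the MESP analysis of \cite{li2020best}" is indeed what the paper does (it invokes Theorem 7 there and its complement), so that part is acceptable as a plan but is not yet a proof.
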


\begin{proof}
	The approximation bound attains the minimum of $d \log(1+(\bar{s}/d)\sigma_{\max}^2/(1+\sigma_{\max}) )$ and $\bar{s} \log(\bar{s})$, where they are derived based on \ref{model_dopt}, \ref{model_mesp}, and their complementary problems, respectively. Exploring the local optimality of the output solution $\hat{S}$, we can show that
	\[
\begin{array}{l}
z^* \le \hat{z}_R \le \ldet \bm C + \ldet\bigg(\bm{I}_d+\sum_{i\in \hat S} \bm b_i \bm b_i^{\top}\bigg) + d \log \left (1+ \frac{\bar{s} \sigma^2_{\max}}{d(1+\sigma_{\max})} \right ),
\\
z^* \le \hat{z}_M \le \log f\bigg(\sum_{i\in \hat S} \bm v_i \bm v_i^{\top} \bigg) + \ldet (\bm C) + s\min\left\{\log(s), \log\left(n-s-\frac{n}{s}+2\right)\right\},
\\
z^* \le \hat{z}_M^c \le \log f\bigg(\sum_{i\in \hat S} \bm v_i \bm v_i^{\top} \bigg) + \ldet (\bm C) + (n-s)\min\left\{\log(n-s), \log\left(s-\frac{n}{n-s}+2\right)\right\}.
\end{array}
\]
	The detailed proof can be found in Appendix \ref{proof:them:local}.
%
	\qed
\end{proof}

We make the following remarks concerning \Cref{them:local}:
	\begin{enumerate}[(i)]
\item Either approximation bound with the `min' in \Cref{them:local} is invariant with $s$ and $n-s$ by leveraging the two CIPs and their complements;
\item The second approximation bound attains zero when $\bar{s}=1$, implying that the output solution of the local-search \Cref{algo:localsearch} is optimal, and when $s=1$, we have $\hat{z}_M=z^*$, and when $s=n-1$, we have $\hat{z}_M^c=z^*$, as summarized in \Cref{cor:ls};
\item The second approximation bound (i.e., $\bar{s} \log(\bar{s})$) of the local-search \Cref{algo:localsearch} improves on the one for \ref{MESP} (i.e., $s\log[s-(s-1)/s (2s-n)_+]$) derived by \cite{li2020best}, when the covariance matrix of \ref{MESP} is positive-definite.
 \Cref{fig_LS_bound} illustrates the comparisons of two approximation bounds (i.e., our new bound $\bar{s} \log(\bar{s})$ versus the existing bound $s\log[s-(s-1)/s (2s-n)_+]$) with $n=10,50$, where the greatest improvement of our bound over that in \cite{li2020best} is indicated by a black dashed line. We see that when $s > n/2$, our new bound provides the local-search \Cref{algo:localsearch} with a tighter performance guarantee;
\item The first approximation bound involving the constant $\sigma_{\max}$ will be discussed after \Cref{them:samp}, along with that of the sampling algorithm which is also derived based on \ref{model_dopt};
\item Another side product of \Cref{them:local} is to provide \ref{model} with the optimality gaps of three proposed Lagrangian dual bounds: $\hat{z}_R$, $\hat{z}_M$, and $\hat{z}_M^c$, as presented in \Cref{cor:ldb}.
	\end{enumerate}

\begin{corollary} \label{cor:ls}
When $s=1$ and $s=n-1$, we have $\hat{z}_M=z^*$ and $\hat{z}_M^c=z^*$, respectively. In both cases, the local-search \Cref{algo:localsearch} returns an optimal solution.
\end{corollary}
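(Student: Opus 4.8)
The plan is to specialize the sandwich inequalities recorded in the proof of \Cref{them:local} to the endpoints $s=1$ and $s=n-1$, at which the approximation-gap terms collapse to zero. For the local-search output $\hat S$, \Cref{them:local} furnishes
\[
z^* \le \hat z_M \le \ldet \bm C + \log f\Big(\textstyle\sum_{i\in \hat S} \bm v_i \bm v_i^{\top}\Big) + s\min\Big\{\log(s),\, \log\big(n-s-\tfrac{n}{s}+2\big)\Big\},
\]
together with the symmetric chain through $\hat z_M^c$ carrying the gap term $(n-s)\min\{\log(n-s),\log(s-\tfrac{n}{n-s}+2)\}$. By \Cref{them:mesp}, the two leading terms $\ldet \bm C + \log f(\sum_{i\in \hat S}\bm v_i\bm v_i^\top)$ equal precisely the \ref{model} objective value attained at the feasible set $\hat S$, which I abbreviate $\mathrm{obj}(\hat S)$; feasibility of $\hat S$ then gives $\mathrm{obj}(\hat S)\le z^*$.

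First I would evaluate the $\hat z_M$ gap term at $s=1$: it becomes $1\cdot\min\{\log 1,\,\log(n-1-n+2)\}=\min\{0,\log 1\}=0$, so the first chain reduces to $z^*\le \hat z_M\le \mathrm{obj}(\hat S)$. Chaining this with $\mathrm{obj}(\hat S)\le z^*$ forces every inequality to hold with equality, yielding $\hat z_M=z^*=\mathrm{obj}(\hat S)$; in particular $\hat S$ is optimal. For $s=n-1$ I would repeat the computation on the complementary gap term, where $n-s=1$ and $\min\{\log 1,\log((n-1)-n+2)\}=\min\{0,\log 1\}=0$, so the second chain collapses to $z^*\le \hat z_M^c\le \mathrm{obj}(\hat S)$ and the same sandwich delivers $\hat z_M^c=z^*$ along with optimality of $\hat S$.

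Because the argument is nothing more than substitution into inequalities already proved, I do not expect a genuine obstacle. The only point demanding care is to verify that the two leading terms of each bound coincide with the true objective at $\hat S$, so that the upper and lower bounds on $z^*$ close on a single quantity; this is immediate from the equivalence established in \Cref{them:mesp}. I would finally note that, since at these two endpoints every local optimum is forced to be globally optimal, the stated conclusion about \Cref{algo:localsearch} holds independently of its initialization.
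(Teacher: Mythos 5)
Your proposal is correct and follows essentially the same route as the paper, which justifies the corollary by observing (in the remarks preceding it) that the $\bar{s}\log(\bar{s})$ bound from the proof of \Cref{them:local} vanishes at $\bar{s}=1$, so the sandwich $z^*\le \hat z_M \le \mathrm{obj}(\hat S)\le z^*$ (resp.\ through $\hat z_M^c$) closes. Your explicit evaluation of the gap terms $s\min\{\log(s),\log(n-s-\tfrac{n}{s}+2)\}$ and its complement at the endpoints, together with the identification of the leading terms via \Cref{them:mesp}, is exactly the intended argument.
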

\begin{figure}[hbtp]
	\centering
	
	\subfigure[{$n$=10}] {
		\includegraphics[width=7.5cm,height=5.3cm]{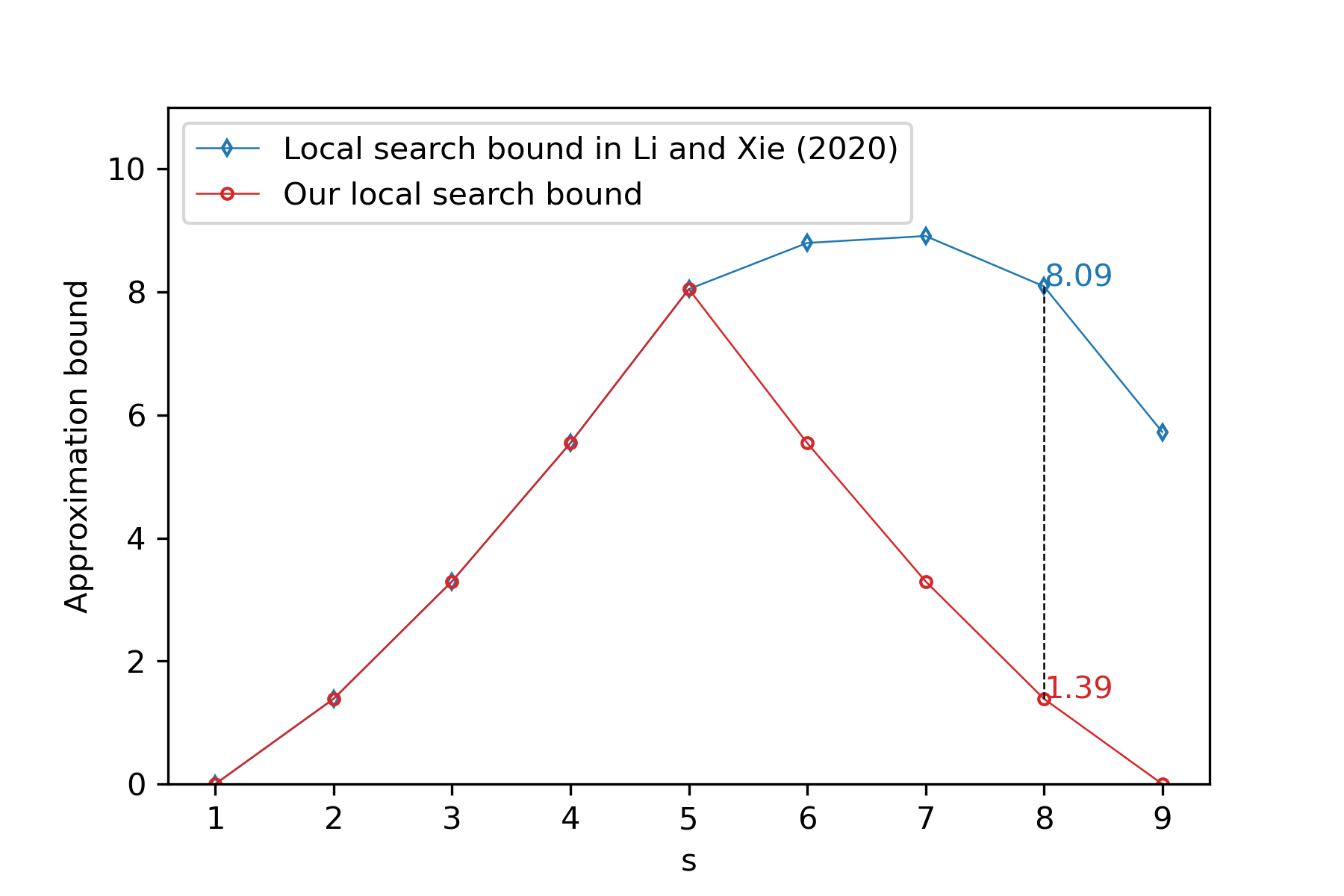}
	}
	\hspace{1em}
	\subfigure[{$n$=50}] {
		\centering
		\includegraphics[width=7.5cm,height=5.3cm]{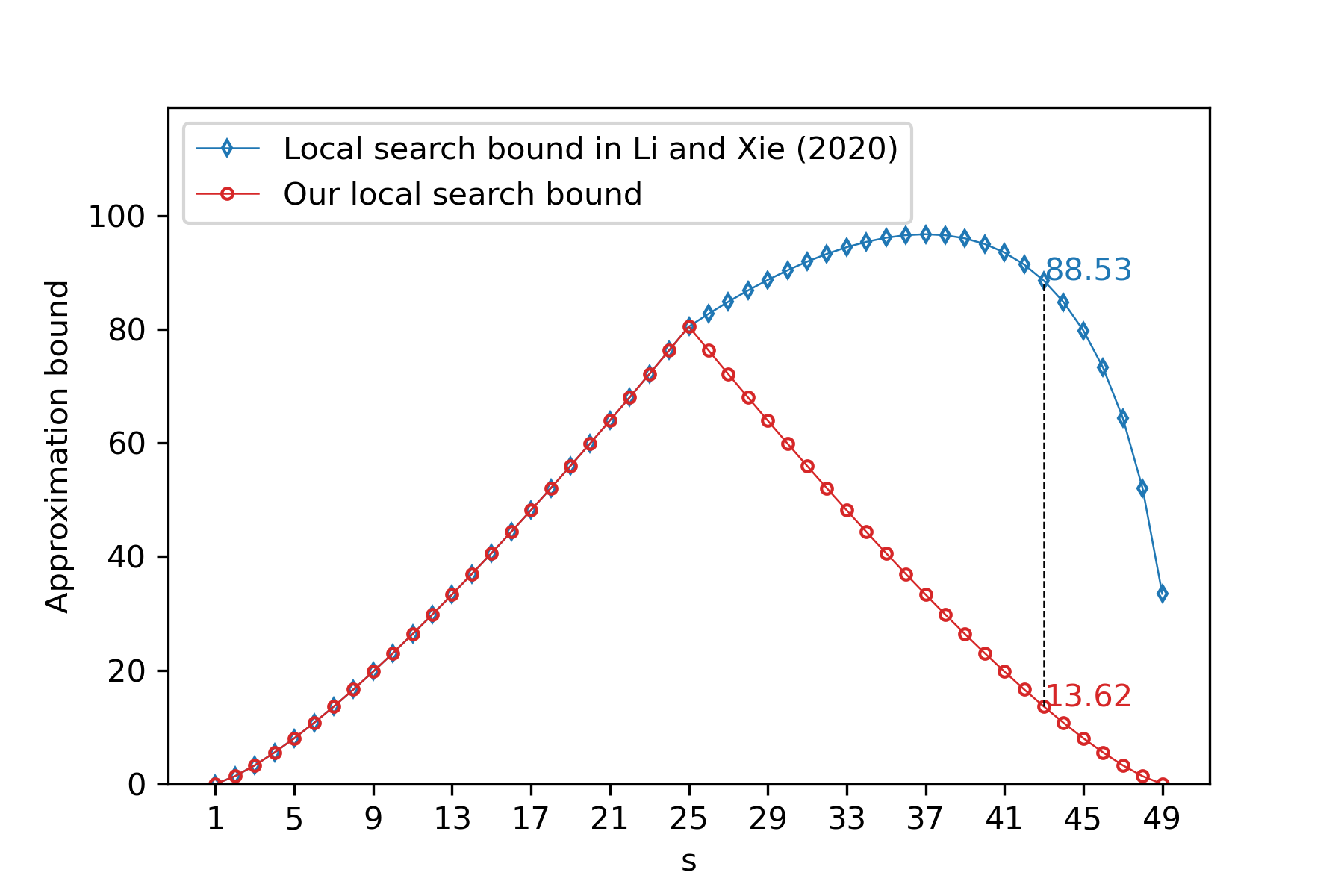}
	}
	\caption{Comparison between our approximation bound from the local-search \Cref{algo:localsearch} with the existing one}\label{fig_LS_bound}
\end{figure}

\begin{corollary}\label{cor:ldb}
			The continuous relaxation values $\hat{z}_R$ of \ref{model_dopt}, $\hat{z}_M$ of \ref{model_mesp}, and $\hat{z}_M^c$ of the complementary M-DDF \eqref{model_mespc} satisfy
			\[
			\begin{array}{l}
z^* \le \hat{z}_R \le z^* + n\log\bigg(1+ \frac{\bar{s}\sigma_{\max}^2}{n(1+
	\sigma_{\max})}\bigg);\\
z^* \le \hat{z}_M \le z^* + s\log\left(s - \frac{s-1}{s} (2s-n)_+\right);\\
z^* \le \hat{z}_M^c \le z^* + (n-s)\log\left(n-s - \frac{n-s-1}{n-s} (n-2s)_+\right).
			\end{array}
\]
\end{corollary}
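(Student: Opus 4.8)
The plan is to read off all six inequalities as a byproduct of weak Lagrangian duality together with the intermediate bounds already assembled in the proof of \Cref{them:local}. The three lower bounds $z^*\le\hat z_R$, $z^*\le\hat z_M$, $z^*\le\hat z_M^c$ are pure weak-duality statements: each of $\hat z_R,\hat z_M,\hat z_M^c$ is the optimal value of the Lagrangian dual of the continuous relaxation of one of the formulations \ref{model_dopt}, \ref{model_mesp}, \ref{model_mespc}, whose feasible region (an LP-relaxed cardinality polytope) contains the corresponding integral set $\Z_s$ or $\Z_{n-s}$. Hence the relaxation value is at least the integral optimum, and by weak duality the dual value is at least the relaxation value; chaining these gives $z^*\le\hat z_R$, and likewise for $\hat z_M$ and $\hat z_M^c$.

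For the upper bounds I would reuse verbatim the three displayed chains in the proof of \Cref{them:local}, evaluated at the local-search output $\hat S$. The key observation is that the leading terms in each chain are exactly the \ref{model} objective at the feasible set $\hat S$: by \Cref{them:dopt} we have $\ldet\bm C+\ldet(\bm I_d+\sum_{i\in\hat S}\bm b_i\bm b_i^\top)=\ldet(\bm C+\sum_{i\in\hat S}\bm a_i\bm a_i^\top)$, and by \Cref{them:mesp} the quantity $\ldet\bm C+\log f(\sum_{i\in\hat S}\bm v_i\bm v_i^\top)$ equals the same value. Since $\hat S$ has cardinality $s$ and is therefore feasible for \ref{model}, this common value is at most $z^*$. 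Substituting $z^*$ for the leading terms in each chain turns the three bounds of \Cref{them:local} into $\hat z_R\le z^*+(\text{first gap})$, $\hat z_M\le z^*+(\text{second gap})$, and $\hat z_M^c\le z^*+(\text{third gap})$, which is the content of the corollary once the gap terms are rewritten in the stated form.

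It then remains to match each gap term to the printed expression. For $\hat z_M$ this reduces to the algebraic identity $s\min\{\log s,\ \log(n-s-n/s+2)\}=s\log\!\big(s-\frac{s-1}{s}(2s-n)_+\big)$, which I would verify by the two regimes: when $2s\le n$ the term $(2s-n)_+$ vanishes and both sides equal $s\log s$, while when $2s>n$ a short computation gives $s-\frac{s-1}{s}(2s-n)=n-s-n/s+2$, and one checks that this is the active (smaller) branch of the minimum. The bound for $\hat z_M^c$ follows from the same identity under the symmetry $s\leftrightarrow n-s$, so these two lines require only routine simplification.

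The one genuinely delicate point is the first bound, where \Cref{them:local} carries the ambient dimension $d$ whereas the corollary carries $n$. Here I would revisit the concavity (Jensen) step that produces the gap: the perturbation $\sum_{i\in\hat S}\bm b_i\bm b_i^\top$ has rank at most $\min\{s,d\}$, so at most $\min\{s,d\}\le n$ eigenvalues of $\bm I_d+\sum_{i\in\hat S}\bm b_i\bm b_i^\top$ differ from $1$ and contribute to the gap. Applying Jensen only over these nontrivial eigenvalues, and then invoking the monotonicity of $t\mapsto t\log(1+c/t)$ (increasing for $c>0$, equivalently $\log(1+u)\ge u/(1+u)$ for $u>0$), lets one replace the count of terms by any upper bound on the rank---$d$ in \Cref{them:local}, or $n$ here---while preserving validity. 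Confirming this interchangeability, namely that both the $d$- and $n$-forms are legitimate relaxations of the tight $\min\{s,d\}$-form, is the main obstacle; everything else is bookkeeping on top of \Cref{them:local}.
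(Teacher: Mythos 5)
Your overall route is the same as the paper's: the lower bounds are weak duality plus relaxation, and the upper bounds are obtained by taking the three chains assembled in the proof of \Cref{them:local}, observing that the leading terms $\ldet \bm C+\ldet(\bm I_d+\sum_{i\in\hat S}\bm b_i\bm b_i^\top)=\ldet\bm C+\log f(\sum_{i\in\hat S}\bm v_i\bm v_i^\top)=\ldet(\bm C+\sum_{i\in\hat S}\bm a_i\bm a_i^\top)\le z^*$ for the (feasible, locally optimal) local-search output $\hat S$, and rearranging. The paper's proof of the corollary is literally ``follows from that of \Cref{them:local},'' so your second and third lines, including the verification that $s\min\{\log s,\log(n-s-n/s+2)\}=s\log(s-\tfrac{s-1}{s}(2s-n)_+)$ splits correctly at $2s=n$, are exactly what is intended and are correct.

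The one genuine gap is the step you yourself flag as delicate: the replacement of $d$ by $n$ in the first bound. Your proposed fix — that the gap term really counts only the $\min\{s,d\}$ nontrivial eigenvalues, so any upper bound on that rank (in particular $n$) may be substituted — does not match how $d$ actually enters the derivation, and is not justified as stated. In the proof of \Cref{them:local}, the factor $d$ arises from $\ldet(t\bm\Lambda)=\ldet\bm\Lambda+d\log t$ and from $\tr((\bm I_d+\bm X)\bm\Lambda)=d$ when the scaled dual candidate $\hat{\bm\Lambda}=t\bm\Lambda$ is plugged into \eqref{eq:dopt_ld}; there is no Jensen-over-eigenvalues step whose term count can be swapped for the rank. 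To get a rank-dependent constant one would have to scale $\bm\Lambda$ only on the range of $\bm X$, and then the dual feasibility constraints $\bm b_j^\top\hat{\bm\Lambda}\bm b_j\le\hat\nu+\hat\mu_j$ for $j\notin\hat S$ no longer follow from \eqref{ineq1}, since those $\bm b_j$ need not lie in that range. When $n\ge d$ the monotonicity of $t\mapsto t\log(1+c/t)$ alone gives the $n$-form from the $d$-form, but when $n<d$ your argument does not deliver the stated inequality. The honest resolution is that the paper is internally inconsistent here (\Cref{them:local} states $d$, its appendix proof announces $n$ and derives $d$, and the corollary prints $n$); the bound that actually follows from the paper's argument has $d$ in place of $n$, and the two coincide in the sensor-fusion application where $d=n$. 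You should state the first bound with $d$, or restrict the $n$-form to $n\ge d$.
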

\begin{proof}
The proof follows from that of \Cref{them:local}. \qed
\end{proof}

As mentioned before, \ref{model_dopt} and its complement \ref{model_doptc} have the same continuous relaxation value, whereas \ref{model_mesp} and its complement 
\ref{model_mespc}
do not. This fact is captured by \Cref{cor:ldb}, where we demonstrate a symmetric optimality gap of $\hat{z}_R$ and non-symmetric gaps of $\hat{z}_M$ and $\hat{z}^c_M$.

\subsection{A randomized-sampling algorithm}
In this subsection, we study a randomized-sampling algorithm which relies on the optimal continuous-relaxation solutions of \ref{model_dopt}, \ref{model_mesp}, and \ref{model_mespc}. Given an optimal continuous-relaxation solution $\bm{\hat x}$ of either problem, our Algorithm \ref{algo:sampling} samples a cardinality-$s$ subset $S \subseteq [n]$ with appropriate probability.
\begin{algorithm}[htbp]
	\caption{Sampling Algorithm} \label{algo:sampling}
	\begin{algorithmic}[1]
		\STATE \textbf{Input:} 
	An optimal solution $\hat{\bm x}\in [0,1]^n$ of the continuous relaxation of \ref{model_dopt}, \ref{model_mesp}, or \ref{model_mespc} and a positive integer $s\in [n]$
		\STATE For a cardinality-$s$ subset $S\in [n]$, its probability to be chosen is 
		\begin{align*}
		\mathbb{P}[\tilde{S}=S] = \frac{\prod_{i \in S} \hat{x}_i}{ \sum_{\bar{S} \in \binom{[n]}{s} } \prod_{i \in \bar{S}} \hat{x}_i }~.\\[-30pt]
		\end{align*}
		\STATE \textbf{Output:} Random set $\tilde{S}$
	\end{algorithmic}
\end{algorithm}

The sampling procedure has $O(n\log n)$ complexity, and its detailed efficient implementation can be found in \cite[Section 3.1]{singh2020approximation}. The approximation bound of the output of \Cref{algo:sampling} depends on the choice of the relaxation, as presented in \Cref{them:samp}. 


\vbox{
\begin{restatable}{theorem}{themsamp}\label{them:samp}
	Let $\bm{\hat x}_D, \bm{\hat x}_M, \bm{\hat x}_{M}^c$ denote optimal continuous-relaxation solutions of \ref{model_dopt}, \ref{model_mesp}, and \ref{model_mespc}, respectively. Suppose that Algorithm \ref{algo:sampling} generates random sets $\tilde{S}_D$, $\tilde{S}_M$, $\tilde{S}_{M^c}$ with $\bm{\hat x}_D, \bm{\hat x}_M, \bm{\hat x}_{M}^c$ as inputs, respectively, then we have
	\begin{enumerate}[(i)]
	 \item $\log \mathbb{E} \left[ \det\left(\bm{C}+\sum_{i \in \tilde{S}_D} \bm{a}_i \bm{a}_i^{\top} \right)\right] \ge z^* + n\log(x_{\min}) -(n-s)\log(1+\delta)$,
	where $x_{\min}>0$ is the least non-zero entry in $\bm{\hat x}_D$ and $\delta:=\lambda_{\max}(\bm A^{\top} \bm C^{-1}\bm A) $;
	
\item $\log \mathbb{E} \left[ \det\left(\bm{C}+\sum_{i \in \tilde{S}_M} \bm{a}_i \bm{a}_i^{\top} \right) \right] \ge z^*-s\log\left(\frac{s}{n}\right)- \log \left(\binom{n}{s}\right) $;

\item $\log \mathbb{E} \left[ \det\left(\bm{C}+\sum_{i \in \tilde{S}_{M}^c} \bm{a}_i \bm{a}_i^{\top} \right) \right] \ge z^*-(n-s)\log\left(\frac{n-s}{n}\right)- \log \left(\binom{n}{n-s}\right) $;
	
\item	 Algorithm \ref{algo:sampling} can be derandomized as a polynomial-time algorithm with the same performance guarantees.
	\end{enumerate}
\end{restatable}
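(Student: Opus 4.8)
The plan is to handle the three probabilistic bounds (i)--(iii) through a single expansion of the sampling expectation and to treat the derandomization (iv) separately. Since $\mathbb{P}[\tilde S=S]\propto\prod_{i\in S}\hat x_i$ with denominator the elementary symmetric polynomial $e_s(\hat x)=\sum_{|S|=s}\prod_{i\in S}\hat x_i$, I would first use \Cref{them:dopt} to write
$$\mathbb{E}\left[\det\left(\bm C+\sum_{i\in\tilde S}\bm a_i\bm a_i^\top\right)\right]=\det(\bm C)\cdot\frac{\sum_{|S|=s}\big(\prod_{i\in S}\hat x_i\big)\det\big(\bm I_d+\bm B_S\bm B_S^\top\big)}{e_s(\hat x)}.$$
Expanding the numerator determinant via the identity $\det(\bm I_d+\bm B_S\bm B_S^\top)=\sum_{T\subseteq S}\det(\bm B_T^\top\bm B_T)$ (Cauchy--Binet on principal minors) and exchanging summation order rewrites $\mathbb{E}[\det]$ as $\det(\bm C)\sum_T\det(\bm B_T^\top\bm B_T)\big(\prod_{i\in T}\hat x_i\big)\,e_{s-|T|}(\hat x_{-T})/e_s(\hat x)$, where $\hat x_{-T}$ collects the entries outside $T$. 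Because $z^*\le\hat z_R,\hat z_M,\hat z_M^c$ by weak duality, it suffices in each case to lower-bound $\log\mathbb{E}[\det]$ by the relevant relaxation value minus the stated loss; the theorem then follows since each relaxation value dominates $z^*$.

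For parts (ii) and (iii) I would route the argument through \ref{model_mesp}. By the proof of \Cref{them:mesp}, for $|S|=s$ the integer objective equals $\det(\bm V_S^\top\bm V_S)$, so the M-DDF sampling is exactly the proportional- (volume-) sampling problem for D-optimal design of \cite{nikolov2015randomized,singh2020approximation} applied to the columns $\bm v_i$. The decisive estimate is Maclaurin's inequality $e_s(\hat x)\le\binom{n}{s}(s/n)^s$, which replaces the denominator by the factor $\binom{n}{s}(s/n)^s$; combined with the concavity of $\log f$ and the optimality of $\hat x_M$, this produces the loss $-s\log(s/n)-\log\binom{n}{s}$ in (ii). Part (iii) is obtained identically, but starting from the complementary formulation \ref{model_mespc} and sampling $n-s$ elements, which interchanges $s\leftrightarrow n-s$ and yields the symmetric loss.

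For part (i) I would keep the order-$d$ formulation \ref{model_dopt} and bound the ratio $e_{s-|T|}(\hat x_{-T})/e_s(\hat x)$ uniformly over the sets $T$ with $\det(\bm B_T^\top\bm B_T)\neq0$. The two constants enter naturally here: the smallest nonzero entry $x_{\min}$ of $\hat x_D$ controls the numerator from below, since every retained coordinate carries mass at least $x_{\min}$, which is responsible for the term $n\log(x_{\min})$; and $\delta=\lambda_{\max}(\bm B^\top\bm B)=\lambda_{\max}(\bm A^\top\bm C^{-1}\bm A)$ bounds the per-coordinate determinant contributions, because each rank-one update multiplies the determinant by a factor $1+\bm b_i^\top(\cdots)^{-1}\bm b_i\le 1+\|\bm b_i\|^2\le 1+\delta$, so the $n-s$ dropped coordinates account for $-(n-s)\log(1+\delta)$. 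Assembling these estimates lower-bounds $\mathbb{E}[\det]$ by (a reorganization of) $\exp(\hat z_R)\,x_{\min}^{\,n}(1+\delta)^{-(n-s)}$, giving (i) after taking logarithms and invoking $\hat z_R\ge z^*$.

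Finally, for (iv) I would derandomize by the method of conditional expectations: scanning coordinates $1,\dots,n$, at each step I fix the inclusion/exclusion decision that does not decrease the current conditional expectation of $\det(\bm C+\sum_{i\in\tilde S}\bm a_i\bm a_i^\top)$, so the terminal deterministic set achieves value at least the original expectation. Each conditional expectation is again a ratio of elementary symmetric polynomials weighted by the minors $\det(\bm B_T^\top\bm B_T)$, hence evaluable in polynomial time by standard recursions, giving the claimed polynomial-time guarantee. I expect the main obstacle to be the uniform lower bound on $e_{s-|T|}(\hat x_{-T})/e_s(\hat x)$ in part (i): unlike (ii)--(iii), where Maclaurin's inequality delivers a single clean factor, one must control this ratio across all sizes $|T|\le d$ at once and show the per-coordinate losses telescope into precisely $n\log(x_{\min})-(n-s)\log(1+\delta)$; a secondary difficulty is verifying that the conditional expectations in (iv) remain polynomially computable throughout the scan.
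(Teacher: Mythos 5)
Your treatment of parts (ii)--(iv) is consistent with how the paper proceeds: it invokes \cite[Theorems 5 and 6]{li2020best} directly, and your sketch (Maclaurin's inequality $e_s(\hat{\bm x})\le\binom{n}{s}(s/n)^s$ applied to the sampling denominator, the identification with proportional volume sampling over the columns $\bm v_i$, and derandomization by conditional expectations computed from characteristic-polynomial coefficients) is exactly the content of those cited results. One caveat: the numerator estimate $\sum_{|S|=s}\prod_{i\in S}\hat x_i\det(\bm V_S^\top\bm V_S)\ge f\big(\sum_i\hat x_i\bm v_i\bm v_i^\top\big)$ does not follow from ``concavity of $\log f$''; it is a specific elementary-symmetric-polynomial inequality tied to the integer $k$ of \Cref{lem:iota} and the definition of $f$, so you should either prove that inequality or cite it explicitly.

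Part (i) is where your proposal has a genuine gap, and you have correctly located it yourself: the uniform lower bound on $e_{s-|T|}(\hat{\bm x}_{-T})\prod_{i\in T}\hat x_i/e_s(\hat{\bm x})$ is never established, and your per-coordinate accounting (``each retained coordinate contributes $x_{\min}$, each dropped coordinate costs $\log(1+\delta)$'') does not assemble into a proof. There is a structural obstruction to the Cauchy--Binet route as you set it up: the relaxation value satisfies $\det(\bm I_d+\bm B\Diag(\hat{\bm x}_D)\bm B^\top)=\sum_{T}\prod_{i\in T}\hat x_i\det(\bm B_T^\top\bm B_T)$ with $T$ ranging over all subsets up to size $\min\{d,|\supp(\hat{\bm x}_D)|\}$, which exceeds $s$ whenever $s<d$ (a regime \ref{model} explicitly permits), whereas your expansion of $\mathbb{E}[\det]$ only contains terms with $|T|\le s$ because $T\subseteq S$ and $|S|=s$. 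A term-by-term comparison therefore cannot close the argument; the $(n-s)\log(1+\delta)$ loss must absorb entire missing terms, not just rescale matching ones. The paper sidesteps this by never expanding over $T$: it works with the single $n\times n$ matrix $\bm X=\Diag(\hat{\bm x}_D)+\Diag^{\frac12}(\hat{\bm x}_D)\bm B^\top\bm B\,\Diag^{\frac12}(\hat{\bm x}_D)$, writes $\mathbb{E}[\det]=e_s(\bm\lambda)/e_s(\hat{\bm x}_D)$ for the eigenvalues $\bm\lambda$ of $\bm X$, uses Weyl's inequality to get $\lambda_i\ge x_{\min}(1+\beta_i)$ with $\bm\beta$ the eigenvalues of the second summand, and then passes from $e_s(\bm\lambda)$ to the full product $\prod_i\lambda_i=x_{\min}^{|T|}\det(\bm I_d+\bm B\Diag(\hat{\bm x}_D)\bm B^\top)$ at a cost of exactly $\binom{|T|}{s}\lambda_1^{s-|T|}$; the binomial coefficient cancels against Maclaurin's bound $e_s(\hat{\bm x}_D)\le\binom{|T|}{s}(s/|T|)^s$ on the denominator, and $\lambda_1^{s-|T|}\ge(1+\delta)^{-(n-s)}$ since $\lambda_1\le1+\lambda_{\max}(\bm B^\top\bm B)=1+\delta$. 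That is the mechanism producing both constants, and it is absent from your argument. To repair your version you would need the subset-weighting lemma of \cite{nikolov2015randomized,singh2020approximation} in a form valid for all $|T|\le s$ \emph{and} a separate argument controlling the $|T|>s$ terms of the relaxation; switching to the paper's eigenvalue argument is the cleaner fix.
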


}

		We establish the approximation bounds for \Cref{algo:sampling}, using the solutions of \ref{model_dopt}, \ref{model_mesp}, and \ref{model_mespc}. The detailed proof can be found in Appendix \ref{proof:themsamp}. 
We further remark:
\begin{enumerate}[(i)]
\item The performance of \Cref{algo:sampling} depends on the quality of the continuous-relaxation solution $\hat{\bm x}$, i.e., a tighter continuous relaxation bound yields better sampling results. It can be also seen that the running time of \Cref{algo:sampling} is dependent of the dimensionality of the data, i.e., $d$;
\item Using the optimal continuous-relaxation solutions from \ref{model_mesp} and \ref{model_mespc}, the output from \Cref{algo:sampling} can be at most $\bar{s}\log (\bar{s}/n)+\log (\binom{n}{\bar{s}})$ away from the optimal value (recall that $\bar{s}:=\min\{s, n-s\}$), which improves the approximation bound (${s}\log (\bar{s}/n)+\log (\binom{n}{{s}})$) of \Cref{algo:sampling} for solving \ref{MESP} in \cite{li2020best}. The comparison between these two bounds is displayed in \Cref{fig_comp_upper}; and
\item Similar to the corollaries of \Cref{them:local}, \Cref{them:samp} also implies the optimality of \Cref{algo:sampling} for two special cases in \Cref{cor:samp} and alternative optimality gaps of the proposed continuous relaxation values in \Cref{cor:upper}.
\end{enumerate}

\begin{figure}[hbtp]
	\centering
	
	\subfigure[{$n$=10}] {\label{LS_bound}
		\includegraphics[width=7.5cm,height=5.3cm]{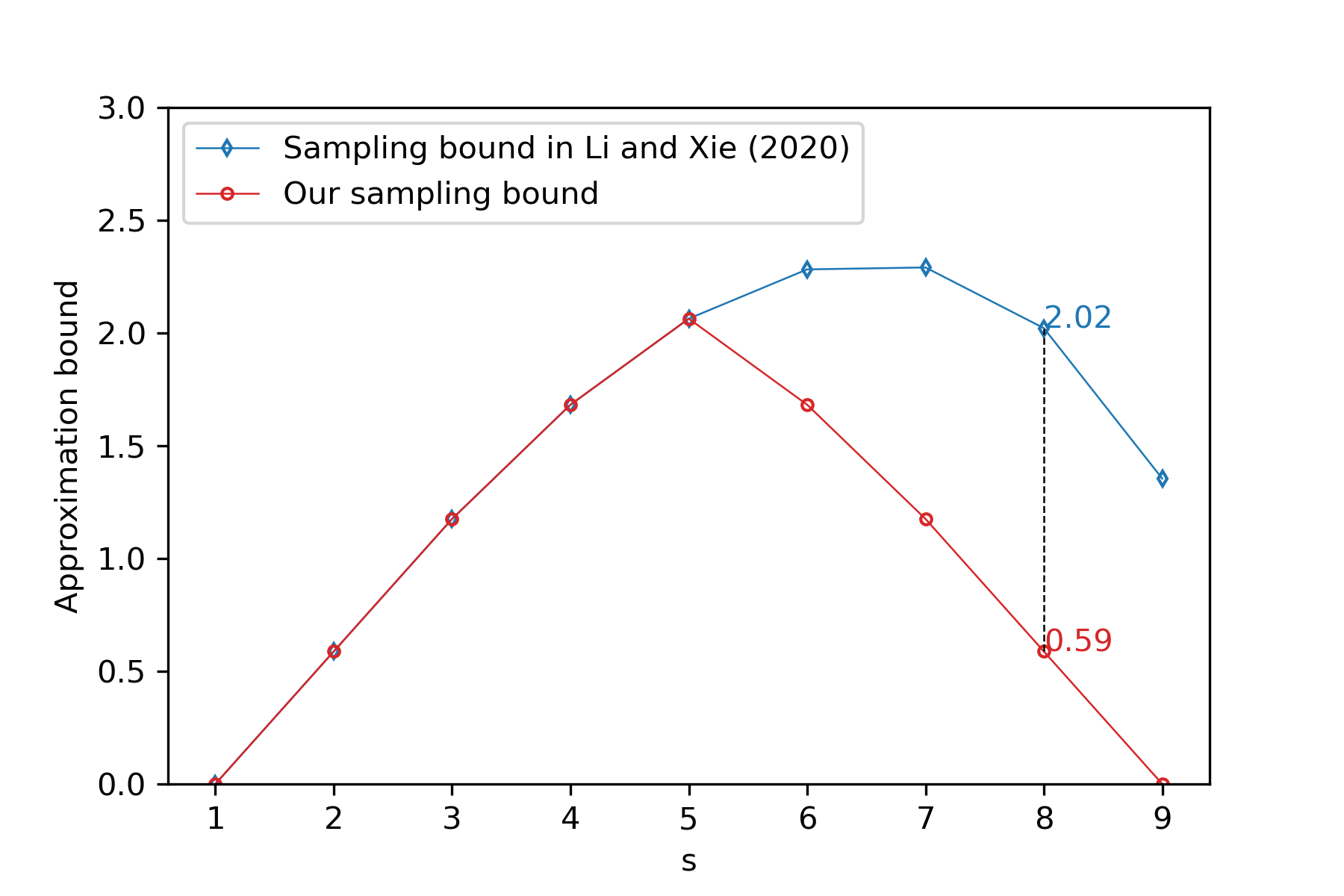}
	}
	\hspace{1em}
	\subfigure[{$n$=50}] {\label{sample_bound}
		\centering
		\includegraphics[width=7.5cm,height=5.3cm]{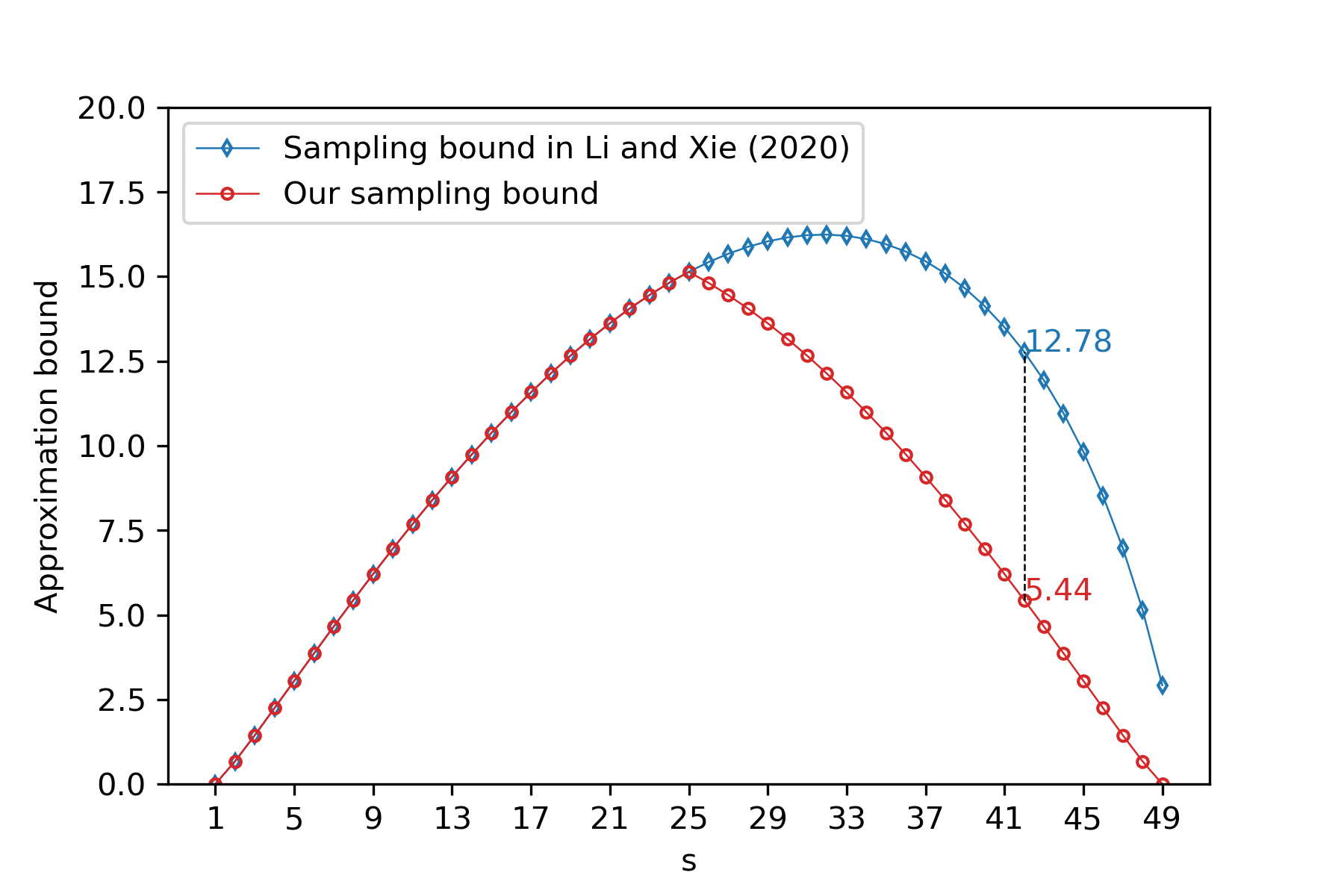}
	}
 	\caption{Comparison between our approximation bound from the sampling \Cref{algo:sampling} with the existing one }\label{fig_comp_upper}
\end{figure}

\begin{corollary} \label{cor:samp}
	When $s=1$ and $s=n-1$, \Cref{algo:sampling} returns an optimal solution when using $\hat{\bm x}_M$ and $\hat{\bm x}_M^c$ as the input, respectively.
\end{corollary}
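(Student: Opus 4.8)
The plan is to obtain the corollary as a direct specialization of the sampling guarantees in \Cref{them:samp}, checking that the relevant approximation gaps vanish at the extreme cardinalities $s=1$ and $s=n-1$. First I would specialize part (ii) to $s=1$: the subtracted gap $s\log(s/n)+\log\binom{n}{s}$ equals $\log(1/n)+\log n=0$ there, so with $\hat{\bm x}_M$ as input the sampled set $\tilde S_M$ obeys $\log\mathbb{E}[\det(\bm C+\sum_{i\in\tilde S_M}\bm a_i\bm a_i^{\top})]\ge z^*$. Symmetrically, I would specialize part (iii) to $s=n-1$, where the gap $(n-s)\log((n-s)/n)+\log\binom{n}{n-s}$ also collapses to $0$, yielding $\log\mathbb{E}[\det(\bm C+\sum_{i\in\tilde S_M^c}\bm a_i\bm a_i^{\top})]\ge z^*$ for the input $\hat{\bm x}_M^c$.

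The crucial step is to convert these lower bounds on an expected determinant into almost-sure optimality of the sampled set. Because every cardinality-$s$ subset $S$ is feasible for \ref{model}, we have $\ldet(\bm C+\sum_{i\in S}\bm a_i\bm a_i^{\top})\le z^*$, hence $\det(\bm C+\sum_{i\in S}\bm a_i\bm a_i^{\top})\le e^{z^*}$ pathwise. Taking expectations over the sampling distribution gives $\mathbb{E}[\det(\cdot)]\le e^{z^*}$, i.e. $\log\mathbb{E}[\det(\cdot)]\le z^*$, and combining with the lower bound above forces $\mathbb{E}[\det(\cdot)]=e^{z^*}$. A nonnegative random variable that is bounded above by $e^{z^*}$ yet attains $e^{z^*}$ in expectation must equal $e^{z^*}$ almost surely; equivalently, every set in the support of the sampler (each $S$ with $\prod_{i\in S}\hat x_i>0$) is optimal. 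Thus \Cref{algo:sampling} returns an optimal solution in both regimes.

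Each case is a one-line arithmetic substitution followed by the same argument, so I do not anticipate a substantive obstacle. The only point needing care is the final ``expectation equals upper bound implies degeneracy'' deduction, which relies on the feasibility bound $\det(\cdot)\le e^{z^*}$ holding pathwise and not merely in expectation. For context I would also note, paralleling \Cref{cor:ls} and \Cref{cor:ldb}, that the gaps vanish precisely because $\hat z_M=z^*$ at $s=1$ and $\hat z_M^c=z^*$ at $s=n-1$, so the continuous relaxation driving the sampler is already exact in these cases.
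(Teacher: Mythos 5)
Your proposal is correct and follows essentially the same route as the paper: specialize the bounds of \Cref{them:samp} at $s=1$ (with $\hat{\bm x}_M$) and $s=n-1$ (with $\hat{\bm x}_M^c$) and observe that the approximation gaps vanish. The paper's proof stops there, whereas you also supply the (correct, and worth making explicit) degeneracy argument converting the bound $\log\mathbb{E}[\det(\cdot)]\ge z^*$ together with the pathwise bound $\det(\cdot)\le e^{z^*}$ into the conclusion that every set in the sampler's support is optimal.
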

\begin{proof}
	According to \Cref{them:samp}, if $s=1$ and $\hat{\bm x}_M$ is used as the input, the approximation bound of \Cref{algo:sampling} is equal to 0. When $s=n-1$, the same argument follows. \qed
\end{proof}

\begin{corollary} \label{cor:upper}
		The continuous relaxation values $\hat{z}_R$ of \ref{model_dopt}, $\hat{z}_M$ of \ref{model_mesp}, and $\hat{z}_M^c$ of \ref{model_mespc} satisfy
		\[
		\begin{array}{l}
z^* \le \hat{z}_R \le z^* -n\log(x_{\min}) +(n-s)\log(1+\delta);\\
z^* \le \hat{z}_M \le z^* + s\log\left(\frac{s}{n}\right)+ \log \left(\binom{n}{s}\right) ;\\
z^* \le \hat{z}_M^c \le z^* + (n-s)\log\left(\frac{n-s}{n}\right)+\log \left(\binom{n}{n-s}\right) .
		\end{array}
\]
\end{corollary}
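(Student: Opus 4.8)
The plan is to derive each pair of inequalities by sandwiching the log-expected objective of \Cref{algo:sampling} between $z^*$ and the relevant dual value, reusing exactly the estimates behind \Cref{them:samp}. The three lower bounds $z^*\le\hat z_R$, $z^*\le\hat z_M$, and $z^*\le\hat z_M^c$ I would obtain from weak duality together with the relaxation interpretation in \Cref{prop:dopt_ld}, \Cref{prop:mesp_ld}, and \Cref{prop:mesp_com}: each $\hat z$ is the optimal value of the Lagrangian dual of a continuous relaxation whose integral counterpart has optimal value $z^*$. Since relaxing $\bm x\in\Z_s$ to the box $[0,1]^n$ (with $\sum_i x_i=s$) enlarges the feasible region of a maximization problem, the relaxation value is at least $z^*$; and by weak Lagrangian duality the dual value dominates the relaxation value, so $\hat z\ge z^*$ in all three cases.

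For the upper bounds, the key observation is that the analysis proving \Cref{them:samp} in fact yields the sharper inequalities in which $z^*$ is replaced by the corresponding relaxation value. Indeed, \Cref{algo:sampling} is seeded with an optimal continuous-relaxation solution $\hat{\bm x}_D$ (resp. $\hat{\bm x}_M$, $\hat{\bm x}_M^c$) whose relaxation objective equals $\hat z_R$ (resp. $\hat z_M$, $\hat z_M^c$) by strong convex duality, so the sampling estimate natively reads
\[
\log\mathbb{E}\Big[\det\big(\bm C+\textstyle\sum_{i\in\tilde S}\bm a_i\bm a_i^\top\big)\Big]\ \ge\ \hat z + (\text{loss}),
\]
where $(\text{loss})$ is the respective term $n\log(x_{\min})-(n-s)\log(1+\delta)$, $-s\log(s/n)-\log\binom{n}{s}$, or $-(n-s)\log((n-s)/n)-\log\binom{n}{n-s}$; the $z^*$ version stated in \Cref{them:samp} then follows from $\hat z\ge z^*$. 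To convert this into an upper bound on $\hat z$, I would close the loop with feasibility: every realization $\tilde S$ produced by \Cref{algo:sampling} is a cardinality-$s$ subset of $[n]$, so pointwise $\det(\bm C+\sum_{i\in\tilde S}\bm a_i\bm a_i^\top)\le e^{z^*}$, and taking expectations and logarithms gives $\log\mathbb{E}[\det(\cdots)]\le z^*$. Chaining this ceiling with the displayed inequality isolates $\hat z\le z^*-(\text{loss})$, which is precisely the stated upper bound in each of the three cases.

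The duality and feasibility arguments are routine, so I expect no new quantitative estimates beyond those already developed for \Cref{them:samp}. The only genuine subtlety is bookkeeping: one must invoke the sampling analysis in its native form, stated relative to the relaxation value $\hat z$ rather than $z^*$, and recognize that reading the same rounding estimate ``backwards''—after inserting the trivial ceiling $\log\mathbb{E}[\det(\cdots)]\le z^*$—turns an approximation guarantee into an optimality-gap bound for the dual. As with \Cref{cor:ldb}, the statement is therefore essentially a corollary whose proof follows from that of \Cref{them:samp}.
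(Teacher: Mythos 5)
Your proposal is correct and matches the paper's intent: the paper's proof of \Cref{cor:upper} is simply ``follows directly from that of \Cref{them:samp},'' and the mechanism is exactly the one you describe --- the sampling analysis natively lower-bounds $\log\mathbb{E}[\det(\cdot)]$ by the relaxation value $\hat z$ plus the loss term, and combining this with the pointwise feasibility ceiling $\log\mathbb{E}[\det(\cdot)]\le z^*$ inverts the approximation guarantee into an optimality-gap bound on $\hat z$. Your handling of the lower bounds $z^*\le\hat z$ via relaxation plus weak duality is likewise the standard argument the paper implicitly relies on.
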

	\begin{proof}
	The proof follows directly from that of \Cref{them:samp}. \qed
\end{proof}

Note that the theoretical optimality gaps of the three continuous relaxation values in \Cref{cor:ldb} and \Cref{cor:upper} are not comparable; thus, taking the minimum of both values yields a better optimality gap. Specifically, for the continuous relaxation value $\hat{z}_R$, the two optimality gaps from \Cref{cor:ldb} and \Cref{cor:upper} depend on parameters $\sigma_{\max}$, $\delta$, and $x_{\min}$ and thus are not comparable. The explicit comparison of alternative optimality gaps for $\hat{z}_M$ and $\hat{z}_M^c$ with $n=10, 50$ can be found in \Cref{fig_rel}, where the blue diamond line and red circle line represent the results in \Cref{cor:ldb} and \Cref{cor:upper}, respectively. We observe that (i) for either continuous relaxation value, the optimality gap in \Cref{cor:upper} is tighter than that of \Cref{cor:ldb}, except one case; and (ii) if $s \le n/2$, then $\hat{z}_M$ outperforms $\hat{z}_M^c$ and is a tighter upper bound for \ref{model}. 
\begin{figure}[ht]
	\centering
	\subfigure[{Optimality gap for $\hat{z}_M$, $n$=10 }] {
		\centering
		\includegraphics[width=7.7cm,height=5.3cm]{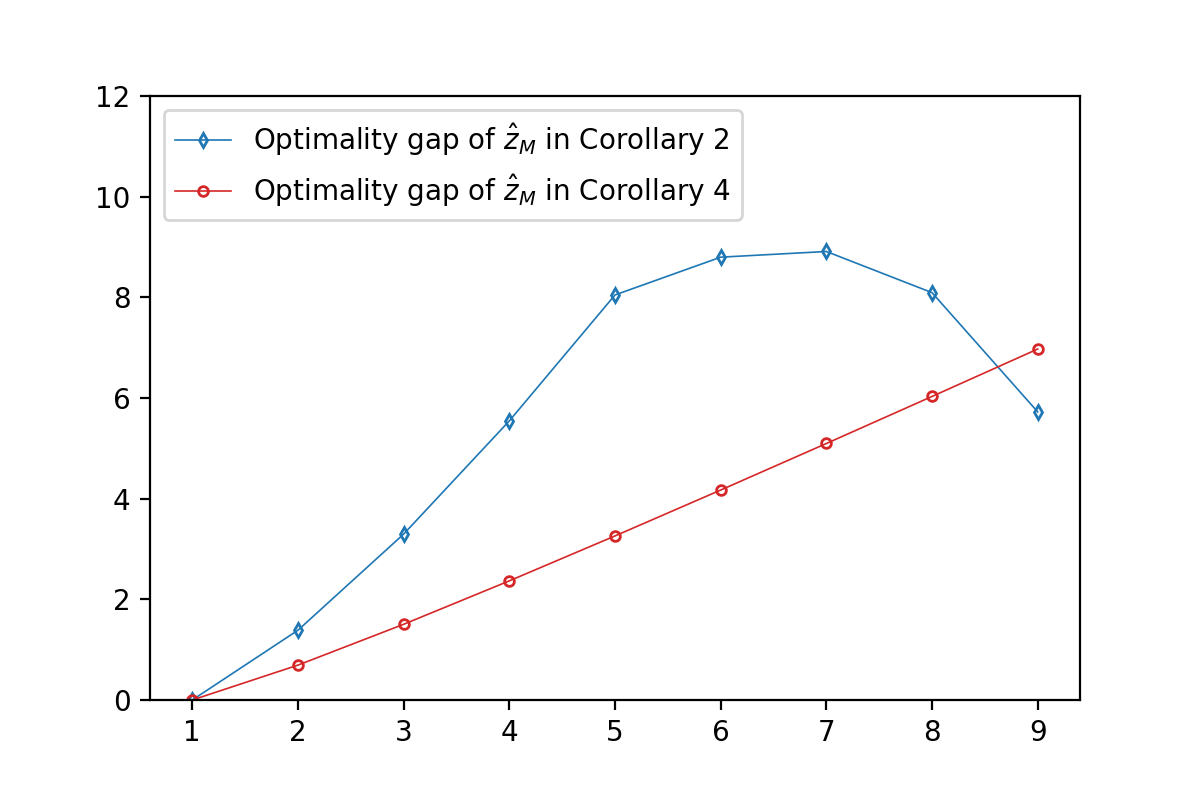}
	}
	\subfigure[{Optimality gap for $\hat{z}_M$, $n$=50}] {
		\includegraphics[width=7.7cm,height=5.3cm]{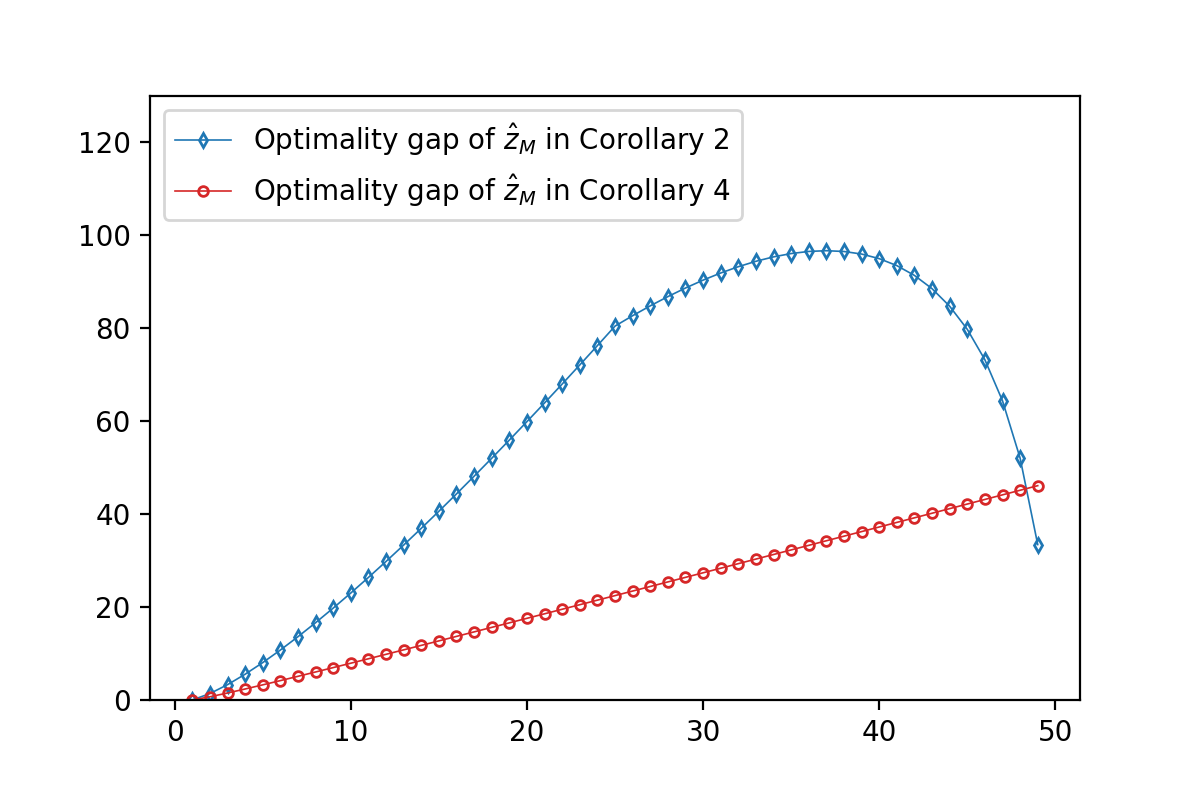}
	}
	\subfigure[{Optimality gap for $\hat{z}_M^c$, $n$=10}] {
		\centering
		\includegraphics[width=7.7cm,height=5.3cm]{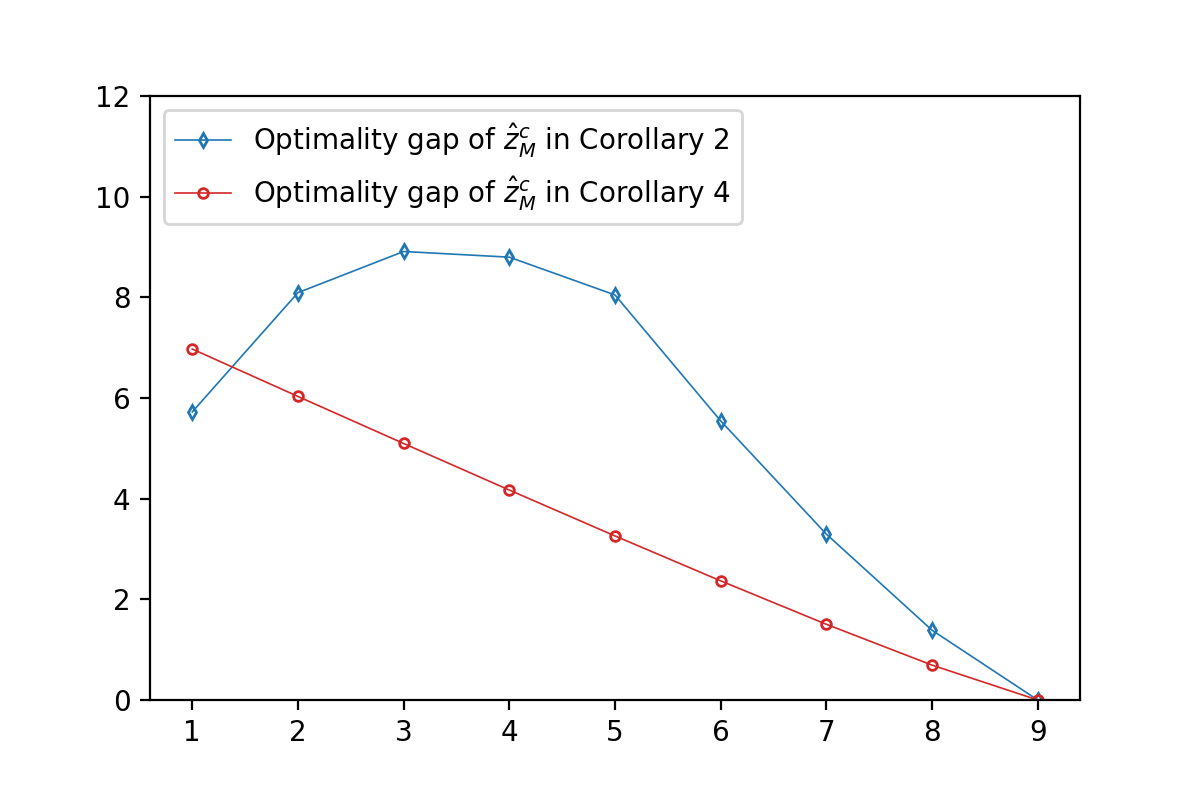}
	}
	\subfigure[{Optimality gap for $\hat{z}_M^c$, $n$=50}] {
		\centering
		\includegraphics[width=7.7cm,height=5.3cm]{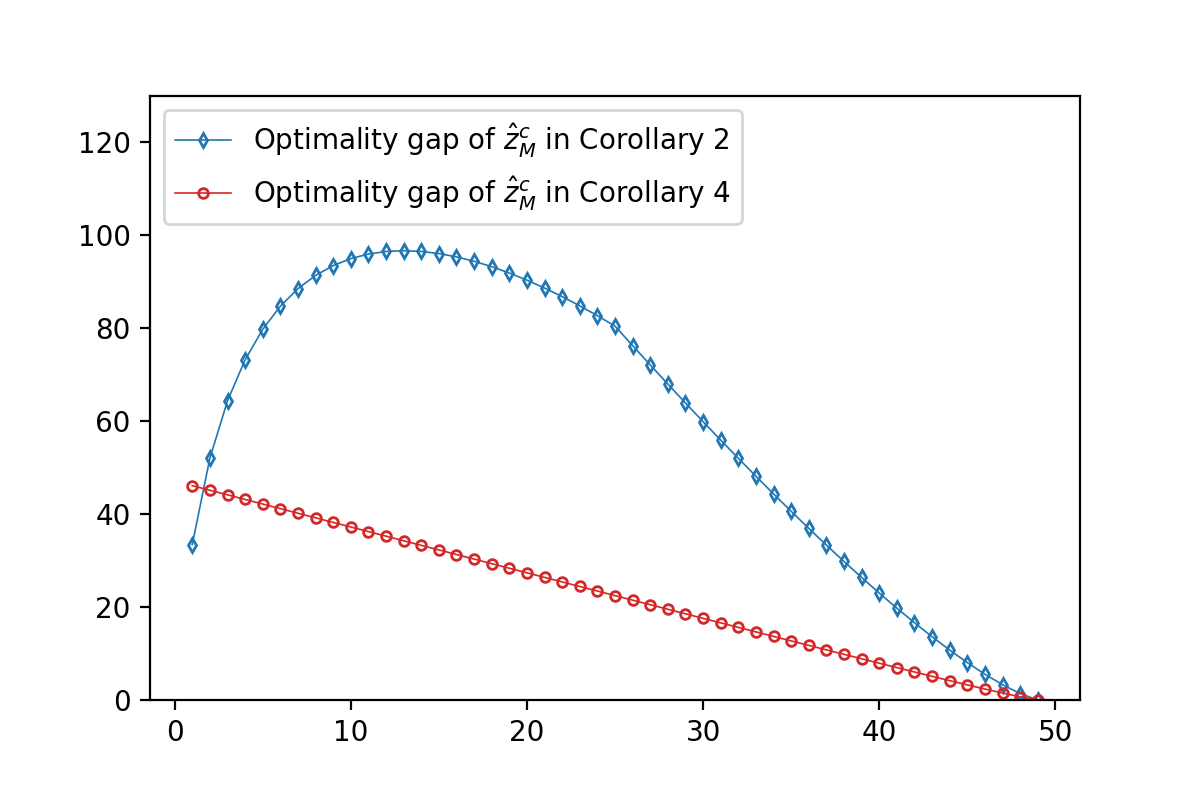}
	}
	\caption{Comparison between optimality gaps for the two continuous relaxation values $\hat{z}_M$ and $\hat{z}_M^c$ 
	}\label{fig_rel}
\end{figure}

Note that in \Cref{them:local} and \Cref{them:samp}, \Cref{cor:ldb} and \Cref{cor:upper},
 the magnitudes of $\sigma_{\max}$ and $\delta$ depend on the contained information difference of new and existing data. If the existing data are 
 very informative, i.e., $\sigma_{\max}$ and $\delta$ tend to be small, and
 the proposed approximation bounds based on \ref{model_dopt} get tighter. Hence, for Algorithm \ref{algo:sampling}, we recommend applying the continuous-relaxation solution of \ref{model_dopt} if the existing data are more informative; otherwise, applying \ref{model_mesp} and its complement.

\section{Our exact algorithmic approach for \ref{model}}
\label{sec:bc}
To solve \ref{model} to optimality, we propose an enhancement on the LP/NLP B\&B algorithm proposed in \cite{QG92}. We first 
formulate \ref{model} as 
\begin{align}\label{model_mip}\tag{DDF-MINLP}
&z^*= \ldet \bm C+ \max_{\substack{z \in \Re,\\ \bm x \in \Z_s}} \bigg\{z \!~:~\! z \le \ldet \bigg(\bm I_d+\sum_{i\in [n]} x_i\bm b_{i}\bm b_{i}^{\top} \bigg), z \le f \bigg(\sum_{i\in [n]} x_i\bm v_{i}\bm v_{i}^{\top} \bigg) \bigg\},
\end{align}
for which the optimal value of the continuous relaxation is the best bound for \ref{model} given by the continuous relaxations of both formulations \ref{model_dopt} and \ref{model_mesp}. 

Using the concavity of the objective functions of both formulations, LP/NLP B\&B considers, for a given $S \subseteq [n]$, the following linear relaxations of the two nonlinear inequalities in \ref{model_mip}:
\begin{equation} \label{eq:grad}
\begin{aligned}
\text{(Linearization)} \ \ 
& z \le \ldet \left[\bm B(S) \right] - \sum_{i \in S} \bm b_i^{\top} [\bm B(S)]^{-1} \bm b_i +\sum_{i \in [n]} \bm b_i^{\top} [\bm B(S)]^{-1} \bm b_i x_i, \\
& z \le f \left[\bm V(S) \right] - \sum_{i \in S} \bm v_i^{\top} \bm g(S) \bm v_i +\sum_{i \in [n]} \bm v_i^{\top} \bm g(S) \bm v_i x_i, 
\end{aligned}
\end{equation}
where for any subset $T\subseteq [n]$, $\bm g(T) := \bm V^{\dag}(T) + 1/\lambda_{\min}(\bm V(T)) [ \bm I_d - \bm V(T) \bm V^{\dag}(T) ]$ is a subgradient of $f(\cdot)$ at $\bm V(T) := \sum_{i\in T} \bm v_i \bm v_i^{\top}$, according to Proposition 2 in \cite{li2020best}.

Then, at iteration $\ell$ of LP/NLP B\&B, a mixed-integer linear-programming (MILP) problem $M^\ell$ is solved. The so-called master problem $M^\ell$ is a relaxation of \ref{model_mip} obtained by replacing the nonlinear inequalities in $z$ by the inequalities in \eqref{eq:grad}, constructed for all $S$ in a given set $\mathcal{S}^\ell$ of linearization points represented by elements of $\binom{[n]}{s}$. $M^\ell$ is solved by a branch-and-bound algorithm, and every time a feasible solution $\bm{\hat{x}}$ of $M^\ell$ is obtained during the execution of the algorithm, $\supp(\bm{\hat{x}})$ is included in $\mathcal{S}^\ell$. In \cite{MFR20}, the authors highlight that LP/NLP B\&B can be efficiently implemented in advanced MILP packages (e.g., CPLEX, Gurobi) with the use of lazy constraints and callback functions. As the set of linearization points increases at each iteration of LP/NLP B\&B, the solution values of $M^\ell$ form a non-increasing sequence of upper bounds for \ref{model_mip} and the algorithm stops when the difference between this upper bound and the best known lower bound is small enough (see \cite{QG92}).

\subsection{Submodular cuts}

Next, we propose a first enhancement on LP/NLP B\&B for \ref{model_mip}. We note that, because the objective functions of \ref{model_dopt} and \ref{model_mesp} are monotone (due to \eqref{eqmesp}) and submodular (via the Hadamard-Fischer inequalities: see, for example, \cite[Section 7.8, Problem 14]{johnson1985matrix}),
according to the results in \cite{wolsey1999integer,ahmed2011maximizing}, the following submodular linear inequalities are valid for \ref{model_mip}, for any $S \subseteq [n]$: 
\begin{equation} \label{eq:submod}
\begin{aligned}
\text{(Submodular)} \quad &z \le \ldet \left[\bm B(S) \right] - \sum_{i \in S}\rho_i(N\setminus \{i\}) (1-x_i) + \sum_{i \in [n]\setminus S}\rho_i(S) x_i , \\
& z \le \ldet \left[\bm B(S) \right] - \sum_{i \in S}\rho_i(S\setminus \{i\}) (1-x_i) + \sum_{i \in [n]\setminus S}\rho_i(\emptyset) x_i ,
\end{aligned}
\end{equation}
where for any $T\subseteq [n]$ and $i\in [n]\setminus T$, we define the difference function $\rho_i(T):= \ldet \left[\bm B(T) + \bm b_i \bm b_i^{\top}\right] - \ldet \left[\bm B(T) \right]$ with $\bm B(T) := \bm{I}_d+\sum_{i\in T} \bm b_{i}\bm b_{i}^{\top}$.

Moreover, by 
using the identity for rank-one update of the determinant of a symmetric matrix, we have the closed-form expression for the difference function $\rho_i(T) = \log[1+\bm b_i^{\top} [\bm A(T)]^{-1} \bm b_i ]$. Similarly, for any subset $T\subseteq [n]$ and $i\in T$, we have $\rho_i(T\setminus\{i\}) = -\log[1-\bm b_i^{\top} [\bm A(T)]^{-1} \bm b_i ]$. Hence, the constraint coefficients involved with the difference functions can be easily computed.

Then, to tighten the MILP relaxation of \ref{model_mip} at LP/NLP B\&B, besides including the standard linearization inequalities \eqref{eq:grad} in $M^\ell$, we also include the submodular inequalities \eqref{eq:submod}, for each set $S$ added to $\mathcal{S}^\ell$.

\subsection{Optimality cuts}

We consider choosing one or multiple data points and fixing their corresponding binary variables in \ref{model_mip} to either one or zero, and then probing the restricted DDF to derive effective optimality cuts on these binary variables, which can help significantly reduce the size of the feasible region of \ref{model} while maintaining the optimal value. 
Specifically, suppose that sets $S^1, S^0\subseteq [n]$ denote the index set of data points being selected (i.e., $x_i=1$ for each $i\in S^1$) and being discarded (i.e., $x_i=0$ for each $i\in S^0$), respectively. Then a restricted problem of \ref{model} is defined as
\begin{equation} \label{eq:reduce}
\begin{aligned}
\text{(Restricted DDF)} \quad z(S^1, S^0):= \max_{\substack{S\subseteq [n]\setminus (S^1\cup S^0),\\ |S|=s-|S^1|}}\ldet\bigg(\bm C+\sum_{i\in S^1 } \bm a_i \bm a_i + \sum_{i\in S} \bm a_i \bm a_i\bigg),
\end{aligned}
\end{equation}
where sets $S^1, S^0$ are disjoint and set $S^1$ is of cardinality no larger than $s$.

Clearly, if $S^1=S^0=\emptyset$, then $z(S^1, S^0) = z^*$ and the formulation is equivalent to \ref{model}. Otherwise, if $z(S^1, S^0) < z^*$, then 
at least one constraint built on sets $S^1$, $S^0$ is violated by an optimal solution and we thus obtain an optimality cut for \ref{model_mip} that cuts off a subset of sub-optimal solutions. 
This result is summarized below.

\begin{theorem}\label{them:fix}
	For any two disjoint sets $S^1, S^0$ with $|S^1|\le s$ and $S^1\cup S^0\subseteq [n]$, if $z(S^1, S^0)< z^*$, then at least one of the two inequalities below is an optimality cut of \ref{model_mip}.
\begin{align}\label{eq:valid}
 \sum_{i \in {S^1}}x_i \le |S^1|-1, \ \ \sum_{j \in {S^0}}x_j \ge 1,\ \ \forall \bm x \in \Z_s.
\end{align}
\end{theorem}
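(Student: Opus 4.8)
The plan is to prove \Cref{them:fix} by a short feasibility (contrapositive) argument, reading an \emph{optimality cut} as an inequality that is satisfied by at least one optimal solution of \ref{model_mip}, so that appending it to the master problem $M^\ell$ leaves the optimal value $z^*$ unchanged. The first step is to recognize that the two inequalities in \eqref{eq:valid} are, over the feasible set $\Z_s$, exactly the logical negation of the forcing constraints that define the restricted problem \eqref{eq:reduce}. Indeed, the restriction underlying $z(S^1,S^0)$ is ``$x_i=1$ for all $i\in S^1$ and $x_j=0$ for all $j\in S^0$''; for a binary vector this is the conjunction $\sum_{i\in S^1}x_i\ge |S^1|$ and $\sum_{j\in S^0}x_j\le 0$, whose negation is precisely the disjunction of $\sum_{i\in S^1}x_i\le |S^1|-1$ and $\sum_{j\in S^0}x_j\ge 1$.

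Next I would show that no optimal solution can violate both inequalities at once. Suppose, for contradiction, that some optimal solution $\bm x^*$ of \ref{model_mip} violates both. Since $\bm x^*\in\{0,1\}^n$, violating $\sum_{i\in S^1}x_i\le |S^1|-1$ forces $x_i^*=1$ for every $i\in S^1$, and violating $\sum_{j\in S^0}x_j\ge 1$ forces $x_j^*=0$ for every $j\in S^0$. Setting $S:=\supp(\bm x^*)\setminus S^1$, one checks that $S^1\subseteq\supp(\bm x^*)$ and $S^0\cap\supp(\bm x^*)=\emptyset$, so $S\subseteq [n]\setminus(S^1\cup S^0)$ with $|S|=s-|S^1|$; hence $S$ is feasible for \eqref{eq:reduce}, and it attains the value $\ldet(\bm C+\sum_{i\in S^1}\bm a_i\bm a_i^{\top}+\sum_{i\in S}\bm a_i\bm a_i^{\top})=z^*$. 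Therefore $z(S^1,S^0)\ge z^*$, contradicting the hypothesis $z(S^1,S^0)<z^*$. This establishes that \emph{every} optimal solution of \ref{model_mip} satisfies at least one of the two inequalities in \eqref{eq:valid}.

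Finally I would conclude by fixing any optimal solution $\bm x^*$: it satisfies at least one of the two inequalities, and whichever it satisfies is then an optimality cut, because it retains the optimal point $\bm x^*$ and so its inclusion in $M^\ell$ cannot lower the optimal value. This proves the claim. The contradiction argument also subsumes the degenerate case in which \eqref{eq:reduce} is infeasible (so $z(S^1,S^0)=-\infty<z^*$ by convention): the point $S$ constructed from a putative $\bm x^*$ would certify feasibility of the restricted problem, so again no optimal solution can violate both inequalities, and in fact the relevant inequality then holds for the entire feasible region.

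There is no serious obstacle here; the argument is essentially a disjunctive/probing observation. The only points requiring care are (i) invoking integrality to pass from ``$\bm x^*$ violates $\sum_{i\in S^1}x_i\le|S^1|-1$'' to ``$x_i^*=1$ on all of $S^1$'' (and symmetrically for $S^0$), and (ii) correctly identifying that $z(S^1,S^0)<z^*$ is equivalent to the statement that no optimal solution meets both forcing constraints simultaneously, which is exactly what converts the disjunction $(I)\lor(II)$ into a guaranteed optimality cut.
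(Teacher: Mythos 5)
Your proposal is correct and follows essentially the same contradiction argument as the paper: if an optimal $\bm x^*$ satisfied both forcing constraints ($x_i^*=1$ on $S^1$, $x_j^*=0$ on $S^0$), its support would yield a feasible solution of the restricted problem \eqref{eq:reduce} with value $z^*$, contradicting $z(S^1,S^0)<z^*$, so by integrality $\bm x^*$ must satisfy at least one of the two inequalities in \eqref{eq:valid}. Your write-up is in fact slightly more careful than the paper's (explicitly taking $S=\supp(\bm x^*)\setminus S^1$ and handling the infeasible restricted case), but the underlying idea is identical.
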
 
\begin{proof}
Given an optimal solution $\bm x^*$ of \ref{model_mip}, if $\bm x^*$ satisfies the constraints $\sum_{i \in {S^1}}x_i^* = |S^1|$ and $\sum_{j \in {S^0}}x_j^* = 0$, 
 then $\supp(\bm x^*)$ will be feasible to the restricted problem \eqref{eq:reduce} with the same objective value $z^*$, which contradicts $z(S^1, S^0) < z^*$. Therefore, $\bm x^*$ must violate at least one of the equality constraints above and using the fact that $\bm x^*$ is binary, we complete the proof.
 \qed
\end{proof}

For the result of \Cref{them:fix}, we remark that (i) if either $S^1$ or $S^0$ is empty, then the inequality in \eqref{eq:valid} based on the non-empty set must be an optimality cut; (ii)
 if one of sets $S^1$ and $S^0$ is singleton and the other one is empty, the inequalities in \eqref{eq:valid} recover the well-known variable-fixing ones \cite{fischetti2010heuristics}; and (iii) if both sets $S^1$ and $S^0$ are non-empty, the optimality cuts in \eqref{eq:valid} can be enforced via disjunctive programming.
 The optimality cuts are effective at reducing the search space and significantly improve the LP/NLP B\&B algorithm as shown in our numerical results. 

Albeit being effective, a common criticism of the probing technique in mixed-integer programming is its computation expense \cite{achterberg2020presolve}, e.g., the optimal values $z(S^1, S^0)$ and $z^*$ in \Cref{them:fix} may not be easily computable.
Motivated by our near-optimal approximation algorithms and strong Lagrangian dual bounds for \ref{model}, a compromise is that if an upper bound for $z(S^1, S^0)$
is less than a lower bound for $z^*$ (denoted by $z^{lb}$), then the conclusion in \Cref{them:fix} holds. Besides, following the spirit of the two Lagrangian duals for the continuous relaxations of \ref{model} in \Cref{sec:cip}, the restricted problem \eqref{eq:reduce} also admits two alternative upper bounds as follows, corresponding to Lagrangian dual problems \eqref{eq:dopt_ld} and \eqref{eq:mesp_ld}, respectively.
	\begin{equation}\label{eq:reduceld}
\begin{aligned}
&\hat{z}_R(S^1, S^0):= \ldet \bm C+ \min_{\bm{\Lambda} \in \S_{++}^d, \nu, \bm{\mu} \in \Re^{n}_+} 	\bigg\{-\ldet \bm{\Lambda} + \tr \bm{\Lambda} +\nu +\sum_{i \in [n]}\mu_i -d \\
&\qquad+\bigg(\sum_{j\in S^1} (\bm b_j^{\top} \bm \Lambda \bm b_j -\nu -\mu_j)\bigg) +\bigg( -\sum_{l\in S^0} \mu_l\bigg): 
\bm{b}_i^{\top}\bm{\Lambda}\bm{b}_i \le \nu+ \mu_i, \forall i \in [n]\setminus \{S^1\cup S^0\} \bigg\}, \\
&\hat {z}_{M}(S^1, S^0) := \ldet \bm C + \min_{\bm \Lambda \in \S_{++}^n, \nu, \bm{\mu} \in \Re^n_+} \bigg \{ -\log \det\limits_s(\bm \Lambda) + s\nu + \sum_{i\in [n]}\mu_i -s\\
&\qquad+\bigg(\sum_{j\in S^1} (\bm v_j^{\top} \bm \Lambda \bm v_j -\nu -\mu_j)\bigg) +\bigg( -\sum_{l\in S^0} \mu_l\bigg): \bm v_i^{\top}\bm \Lambda \bm v_i \le \nu + \mu_i, \forall i \in [n] \setminus \{S^1\cup S^0\}\bigg \}.
\end{aligned}
\end{equation}

We observe that for some appropriately selected sets $S^1$ and $S^0$, the Lagrangian dual bounds \eqref{eq:reduceld} can be smaller than the lower bound of \ref{model}, i.e., $\hat{z}_R(S^1, S^0)<z^{lb}$ or $\hat {z}_{M}(S^1, S^0)< z^{lb}$. Our selection strategy of sets $S^1$ and $S^0$
 is a unification of the primal and dual perspectives, with an aim of reducing values: $\hat{z}_R(S^1, S^0)$ and $\hat {z}_{M}(S^1, S^0)$, which is discussed below.\par
\begin{enumerate}[(i)]
\item \textbf{Primal:} Given an optimal solution $\hat{\bm x}$ to the continuous relaxation of \ref{model_dopt} or \ref{model_mesp}, we let $S^1\subseteq \{i: \hat{x}_i\le \xi^0, \forall i \in [n]\}$ with $\xi^0\in [0,1]$ being a positive number close to 0 and $S^0\subseteq \{i: \hat{x}_i\ge \xi^1, \forall i \in [n]\}$ with $\xi^1\in [0,1]$ being close to 1. In this case, we expect a big reduction on the restricted Lagrangian dual bounds in \eqref{eq:reduceld}, when compared to the unrestricted bounds $\hat{z}_R$ and $\hat {z}_{M}$. 
Our numerical experiments suggest that this selection strategy performs very well in exploring appropriate subset $S^1$ to construct an optimality cut.

\item \textbf{Dual:} Using Lagrangian dual formulations \eqref{eq:dopt_ld} and \eqref{eq:mesp_ld}, 
 we can ensure that the Lagrangian dual bounds of restricted DDF problem \eqref{eq:reduce} decrease by at least a given threshold. 
 According Lagrangian dual formulations in \eqref{eq:reduceld}, given an optimal dual solution $(\hat{\bm \Lambda}, \hat{\nu}, \hat{\bm \mu})$ of \eqref{eq:dopt_ld} or \eqref{eq:mesp_ld}, we see that the corresponding restricted Lagrangian dual bound achieves a reduction of at least $ \sum_{j\in S^1} (\bm b_j^{\top} \hat{\bm \Lambda} \bm b_j -\hat{\nu} -\hat{\mu}_j) -\sum_{l\in S^0} \hat\mu_l$ or $\sum_{j\in S^1} ( \bm v_j^{\top} \hat{\bm \Lambda} \bm v_j -\hat{\nu} -\hat{\mu}_j) -\sum_{l\in S^0} \hat\mu_l$, compared with the original optimal dual value (see \Cref{prop:reduceld} below). This inspires us to identify sets $S^1$ and $S^0$ satisfying $ \sum_{j\in S^1} (\bm b_j^{\top} \hat{\bm \Lambda} \bm b_j -\hat{\nu} -\hat{\mu}_j) <0$ or $\sum_{j\in S^1} ( \bm v_j^{\top} \hat{\bm \Lambda} \bm v_j -\hat{\nu} -\hat{\mu}_j) <0$, and $\sum_{l\in S^0} \hat{\mu}_l >0$, such that each restricted Lagrangian dual problems in \eqref{eq:reduceld} yields a smaller upper bound for restricted DDF \eqref{eq:reduce} than the original Lagrangian dual value.
 It is worth mentioning that we can warm-start the solution procedure of the restricted dual problems \eqref{eq:reduceld} by using the optimal solution $(\hat{\bm \Lambda}, \hat{\nu}, \hat{\bm \mu})$ of the original Lagrangian dual problem \eqref{eq:dopt_ld} or \eqref{eq:mesp_ld}. 
We also observe in the numerical study that the dual selection strategy is good at exploring an appropriate subset $S^0$ to construct an optimality cut.
\end{enumerate}

\begin{proposition}\label{prop:reduceld}
Let $(\hat{\bm \Lambda}, \hat{\nu}, \hat{\bm \mu})$ be a feasible solution of Lagrangian dual problem \eqref{eq:dopt_ld} (resp., \eqref{eq:mesp_ld}) with the objective value $z^{ub}$. Then, $(\hat{\bm \Lambda}, \hat{\nu}, \hat{\bm \mu})$ is also feasible to its corresponding restricted Lagrangian dual problem in \eqref{eq:reduceld} and the resulting objective value is equal to
	\[ z^{ub} + \sum_{j\in S^1} (\bm b_j^{\top} \hat{\bm \Lambda} \bm b_j -\hat{\nu} -\hat{\mu}_j) -\sum_{l\in S^0} \hat\mu_l \quad \left(\text{resp., }\; z^{ub} + \sum_{j\in S^1} ( \bm v_j^{\top} \hat{\bm \Lambda} \bm v_j -\hat{\nu} -\hat{\mu}_j) -\sum_{l\in S^0} \hat\mu_l\right). \]
\end{proposition}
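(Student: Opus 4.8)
The plan is to prove the two assertions—feasibility and the exact objective value—by a direct comparison of the restricted Lagrangian dual \eqref{eq:reduceld} with the original dual \eqref{eq:dopt_ld} (resp. \eqref{eq:mesp_ld}), treating $(\hat{\bm\Lambda},\hat\nu,\hat{\bm\mu})$ as a fixed point rather than re-optimizing. First I would observe that the restricted dual is obtained from the original one by exactly two modifications: (i) the constraints indexed by $i\in S^1\cup S^0$ are dropped, so that the constraint set becomes $\{\bm b_i^{\top}\bm\Lambda\bm b_i\le\nu+\mu_i:\ i\in[n]\setminus(S^1\cup S^0)\}$; and (ii) the objective acquires the additive terms $\sum_{j\in S^1}(\bm b_j^{\top}\bm\Lambda\bm b_j-\nu-\mu_j)-\sum_{l\in S^0}\mu_l$. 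Crucially, the variable domains $\bm\Lambda\in\S_{++}^d$ and $\nu,\bm\mu\in\Re^n_+$ are unchanged.

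For feasibility, I would note that the constraints of \eqref{eq:reduceld} form a subset of those of \eqref{eq:dopt_ld} (indices in $[n]\setminus(S^1\cup S^0)$ versus all of $[n]$), while the domains coincide. Hence any $(\hat{\bm\Lambda},\hat\nu,\hat{\bm\mu})$ feasible for \eqref{eq:dopt_ld} automatically satisfies every retained constraint of \eqref{eq:reduceld} and lies in the same domain, so it is feasible for the restricted problem. The argument for the \ref{model_mesp}-based dual is identical, with $\bm b_i$ replaced by $\bm v_i$ and $-\ldet\bm\Lambda$ replaced by the corresponding $-\log\det_s\bm\Lambda$ term.

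For the objective value, I would simply substitute $(\hat{\bm\Lambda},\hat\nu,\hat{\bm\mu})$ into the objective of \eqref{eq:reduceld}. Collecting the terms $\ldet\bm C-\ldet\hat{\bm\Lambda}+\tr\hat{\bm\Lambda}+s\hat\nu+\sum_{i\in[n]}\hat\mu_i-d$ reproduces exactly the original dual objective evaluated at $(\hat{\bm\Lambda},\hat\nu,\hat{\bm\mu})$, namely $z^{ub}$; the remaining terms are precisely $\sum_{j\in S^1}(\bm b_j^{\top}\hat{\bm\Lambda}\bm b_j-\hat\nu-\hat\mu_j)-\sum_{l\in S^0}\hat\mu_l$. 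This yields the claimed identity, and the \ref{model_mesp} case follows verbatim after the substitution $\bm b\leftrightarrow\bm v$ and the corresponding change of the spectral term.

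The computation involves no genuine analytic difficulty, so the main (and essentially only) task is careful bookkeeping of the linear coefficients of $\nu$ and of the $\mu_i$. Specifically, I would confirm that dropping the $S^1\cup S^0$ indices and adding the correction terms regroups the original contributions $s\nu+\sum_{i\in[n]}\mu_i$ into the restricted form—i.e., that each $j\in S^1$ contributes $\bm b_j^{\top}\bm\Lambda\bm b_j-\nu-\mu_j$ and each $l\in S^0$ contributes $-\mu_l$, which is exactly the content of the additive terms in \eqref{eq:reduceld}. Once this grouping is written out the identity is immediate; no optimization, duality, or inequality estimation is required beyond this elementary term-matching.
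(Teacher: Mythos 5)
Your proof is correct and follows essentially the same route as the paper's, which simply observes that feasibility holds because the restricted dual's constraint set is a subset of the original's and that the objective value follows by direct term matching. One minor point worth flagging: your bookkeeping implicitly reads the restricted R-DDF objective in \eqref{eq:reduceld} as containing $s\nu$ rather than the $\nu$ printed there; that is indeed the correct reading (the printed $\nu$ appears to be a typo, as the Lagrangian of the restricted problem contributes $(s-|S^1|)\nu = s\nu - \sum_{j\in S^1}\nu$), since otherwise the claimed identity would be off by $(s-1)\hat\nu$.
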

\begin{proof}
	If $(\hat{\bm \Lambda}, \hat{\nu}, \hat{\bm \mu})$ is a feasible solution of Lagrangian dual problem \eqref{eq:dopt_ld}, it is easy to check that it is also feasible to the first optimization problem in \eqref{eq:reduceld} whose objective value is $z^{ub} + \sum_{j\in S^1} (\bm b_j^{\top} \hat{\bm \Lambda} \bm b_j -\hat{\nu} -\hat{\mu}_j) -\sum_{l\in S^0} \hat\mu_l$. Similarly, for the Lagrangian dual problem \eqref{eq:mesp_ld}, the same result holds by replacing $\bm b_i$ by $\bm v_i$ for all $i\in [n]$.
\qed
\end{proof}

We note that (i) our selection strategies are easy-to-implement because each continuous relaxation of \ref{model} and its corresponding Lagrangian dual problem can be efficiently solved by the primal-dual Frank-Wolfe algorithm with a sublinear rate of convergence, (ii) the primal and dual selection strategies do not dominate each other and are complementary as shown in our numerical experiments, and (iii) all the analyses and selection strategies can be directly extended to complementary formulations of \ref{model}.

\section{Numerical study on sensor fusion in power systems} \label{sec:pmu}
In this section, we present a real-world sensor-fusion problem in power systems, which can be formulated as \ref{model}. We tested the proposed formulations and algorithms with varying-scale instances. All the experiments were conducted in Python 3.6 with calls to Gurobi 9.0 on a PC with 2.8 GHz Intel Core i5 processor and 8G of memory. All times reported are wall-clock times.

In power systems, phasor measurement units (PMUs) are the most accurate and high-speed time-synchronized devices used to measure phasors of bus voltages and currents in an electric grid (see \cite{liu2001state}). PMUs have broad applications, including state estimation, security assessment, system monitoring, and wide-area control (see \cite{de2010synchronized,theodorakatos2017application}). In particular, reliable state estimation is an essential component of managing a modern energy system, aiming at determining the true voltages at all buses, based on available measurements and information that consist of observed voltage angles, power flows, and injections (see \cite{terzija2010wide}). 

Before the advent of PMUs, the conventional sensors, including supervisory control and data acquisition (SCADA) meter readings, were widely used to perform state estimation in power systems (see \cite{monticelli2000electric}). It is recognized that, in many cases, one barely equips a power grid with a sufficient number of PMUs to achieve the state estimation fully (see \cite{zhou2006alternative}), due to the budget and resource constraints. Therefore, 
it is important to make an intelligent choice of fusing PMUs and SCADA when deciding the PMU locations out of non-reference buses, in order to collect maximum information to best improve the state estimation. According to the PMU and SCADA measurement model in \cite{li2011phasor}, for an $n$-bus power system, the overall Fisher information matrix (FIM) is defined as
\begin{align}
\bm{C} + \sum_{i \in S} \frac{1}{\hat{\sigma}_i^2} \bm{e}_i\bm{e}_i^{\top}, \label{eq:PMU_cov}
\end{align}
where matrix $\bm{C}\in \S_{++}^{n }$ denotes the FIM obtained from conventional sensors and is positive-definite, the set $S\subseteq [n]$ denotes the bus locations of installed PMUs, and for each $i\in [n]$, $\hat{\sigma}_i$ denotes the standard variance of PMU measurements at $i$-th bus.
The FIM \eqref{eq:PMU_cov} of the sensor fusion problem in power systems can reduce to the objective matrix in \ref{model} by letting $\bm a_i=1/\hat{\sigma_i} \bm e_i$ for each $i\in [n]$ and $d=n$.
When employing D-optimality as the information selection criterion of the sensor fusion problem, based on the FIM \eqref{eq:PMU_cov}, it follows that \ref{model} exactly formulates this sensor fusion problem (see \cite{li2011phasor}).

\subsection{A comparison of continuous relaxations: IEEE 118- and 300-bus instances}
From our theoretical analysis, we see that
a tight continuous-relaxation bound for \ref{model} has an important role in the implementation of \Cref{algo:sampling} and the derivation of optimality cuts. Therefore, we first investigated the three continuous-relaxation bounds of \ref{model_dopt}, \ref{model_mesp}, and \ref{model_mespc}, i.e., $\hat{z}_R$, $\hat{z}_M$, and $\hat{z}_M^c$, respectively, using two IEEE benchmark instances with $118$ and $300$ buses (see \cite{aminifar2009contingency}) of the PMU placement problem that provide the matrix $\bm C$ in \ref{model}.
To compare the three alternative continuous-relaxation values, we conducted a controlled experiment with respect to the PMU standard deviations $\{\hat{\sigma}_i\}_{i\in [n]}$, where large and small PMU standard deviations separately represent the two cases where either the existing sensors or the new sensors are more accurate for state estimation. 


We used the Frank-Wolfe algorithm to compute the three upper bounds on the optimal value of \ref{model}.
The computational results for the two instances are displayed in Figure \ref{fig_gap118} and Figure \ref{fig_gap300}, where the optimality gap is equal to the difference between an upper bound and a lower bound for \ref{model} returned by our local-search \Cref{algo:localsearch}.
We note that the Frank-Wolfe algorithm and our local-search \Cref{algo:localsearch} are very efficient, and their computational time is negligible (i.e., less than one minute), so we do not report them.


For the 118-bus instance (\Cref{fig_gap118}), we consider cases where the number of installed PMUs $s\in \{10, 15, \cdots, 105\}$, for $n=118$, in order to compare the upper bounds for a wide range of the parameter $s$. In Figure \ref{fig_gap118}(a), we sample the PMU standard deviations $\{\hat{\sigma}_i\}_{i\in [n]}$ as independent uniform random variables in the range $[0,100]$, and the new sensors contribute less to the state estimation than the existing ones. We see that the continuous-relaxation value $\hat{z}_R$ is smaller than both $\hat{z}_M$ and $\hat{z}_M^c$ in most cases in \Cref{fig_gap118}(a). 
By contrast, in \Cref{fig_gap118}(b), we sample the PMU standard deviations as $n$ independent uniform random variables in the range $[0,0.01]$, assuming new more accurate sensors. In this setting, we see that the continuous-relaxation values $\hat{z}_M$ and $\hat{z}_M^c$ are much smaller than $\hat{z}_R$, so we use a pair of vertical axes to illustrate their performance. The comparison results parallel our theoretical findings in \Cref{sec:cip}. 

The comparison of the three upper bounds is also illustrated in \Cref{fig_gap300} for the 300-bus instance and the conclusions are the same.
Thus, both theoretical analyses and numerical comparisons in \Cref{fig_gap118} and \Cref{fig_gap300} demonstrate that the continuous relaxation of \ref{model_mesp} is more stable and tighter than that of \ref{model_dopt} when the new sensors are subject to smaller measurement variances.
In addition, we observe in both figures that $\hat{z}_M$ tends to be stronger than $\hat{z}_M^c$ if $s$ is small; otherwise $\hat{z}_M^c$ is stronger. 
 In practice, PMUs are much more accurate than other sensors (see \cite{zhao2019impact}), and the measurement error is controlled within the range of $[-0.6^{\circ},0.6^{\circ}]$; and the number of PMUs to be installed is usually small due to budget constraints, i.e., $s$ is small. Thus, we identify the continuous-relaxation value $\hat{z}_{M}$ of \ref{model_mesp} as a better upper bound for the PMU placement problem in power systems that will be used in the following numerical study, where we also set the PMU standard deviation to $0.02^{\circ}$, a known PMU standard error in the literature (see \cite{zimmerman2010matpower}).

\begin{figure}[hbtp]
	\centering
	
	\subfigure[{Large PMU standard deviation}] {
		\includegraphics[width=7.5cm,height=5.8cm]{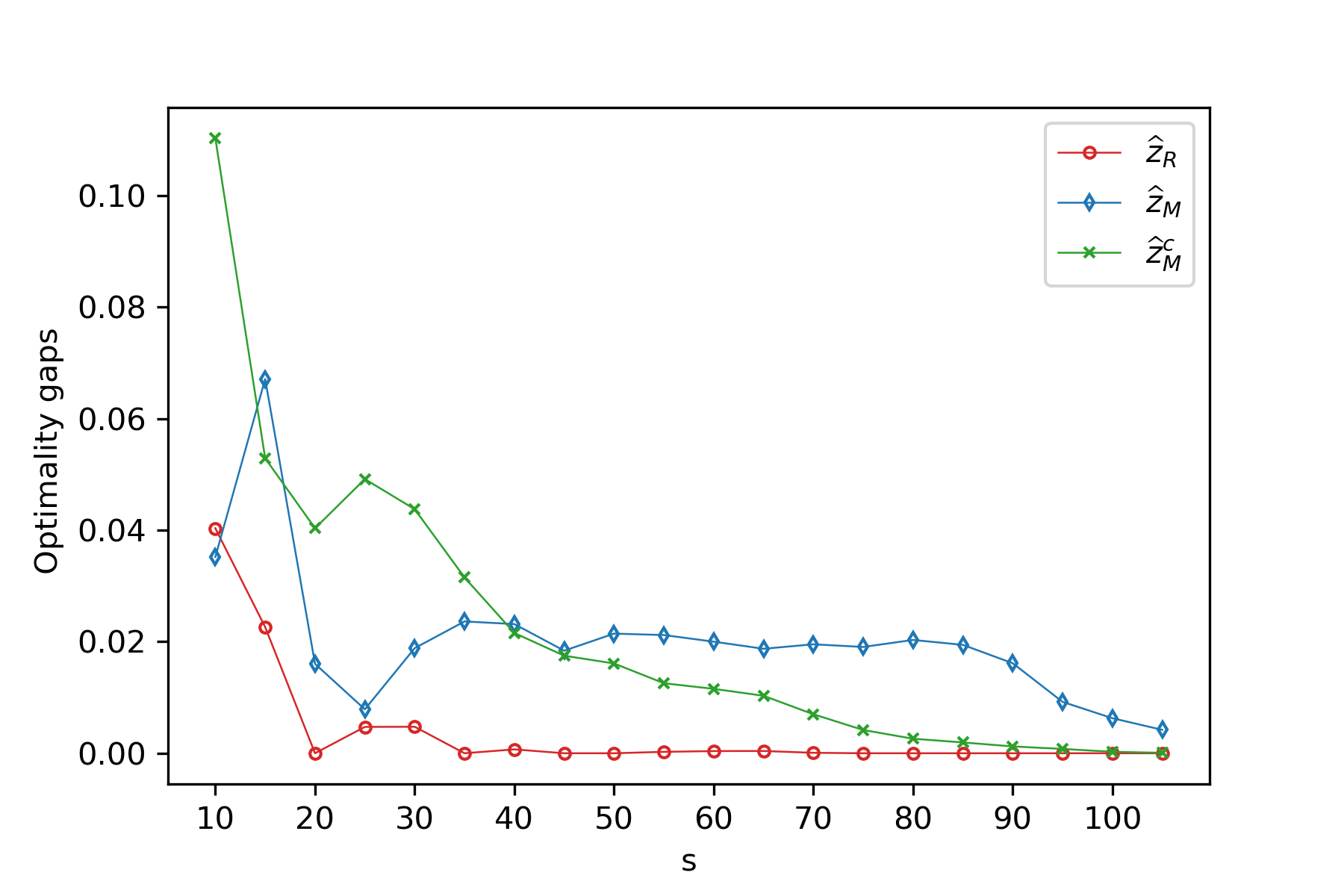}
	}
	\hspace{1em}
	\subfigure[{Small PMU standard deviation}] {
		\centering
		\includegraphics[width=7.5cm,height=5.3cm]{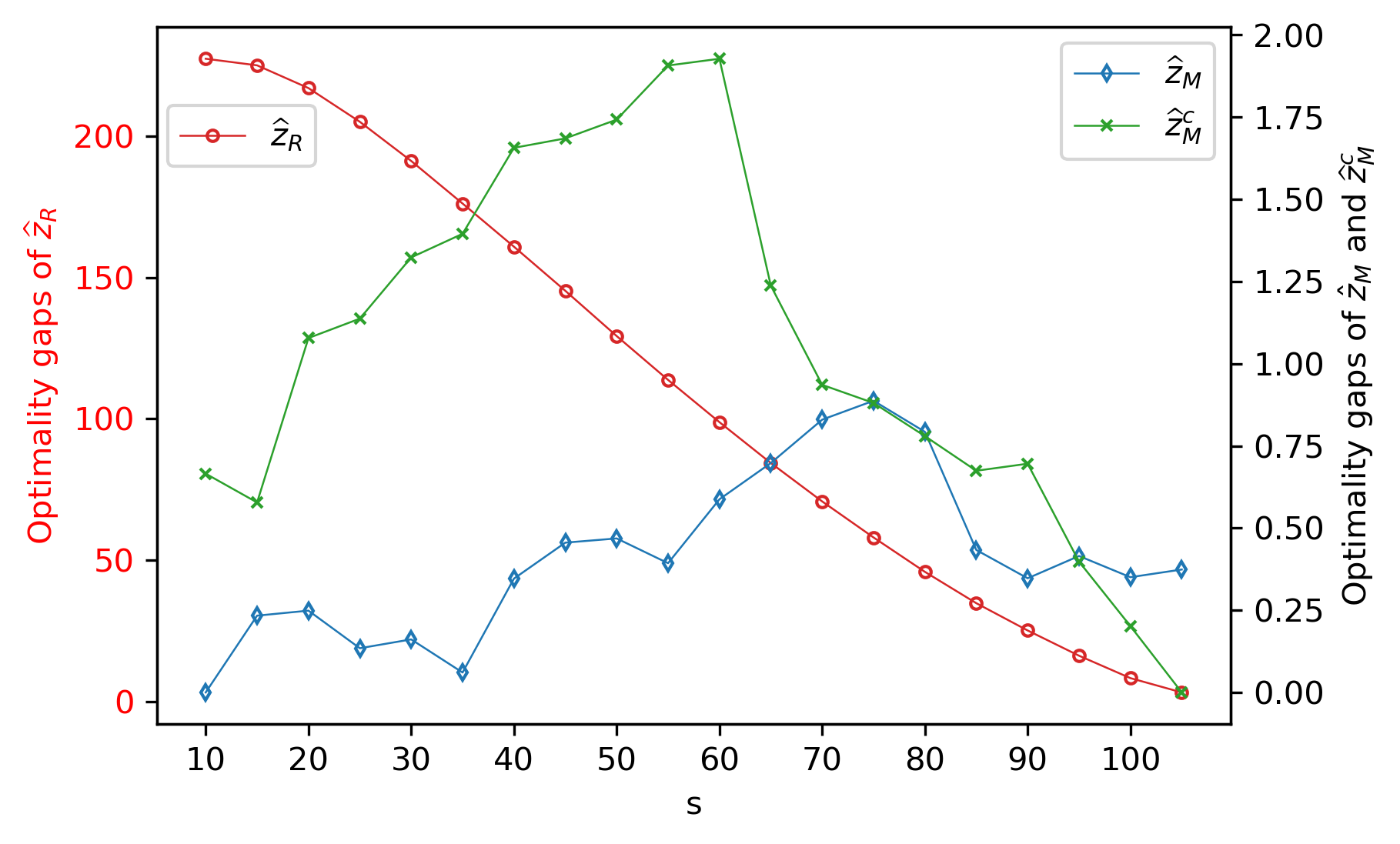}
	}
	\caption{Comparison between continuous-relaxation bounds on IEEE 118-bus instance}\label{fig_gap118}
\end{figure}

\begin{figure}[hbtp]
	\centering
	
	\subfigure[{Large PMU standard deviation}] {
		\includegraphics[width=7.5cm,height=5.8cm]{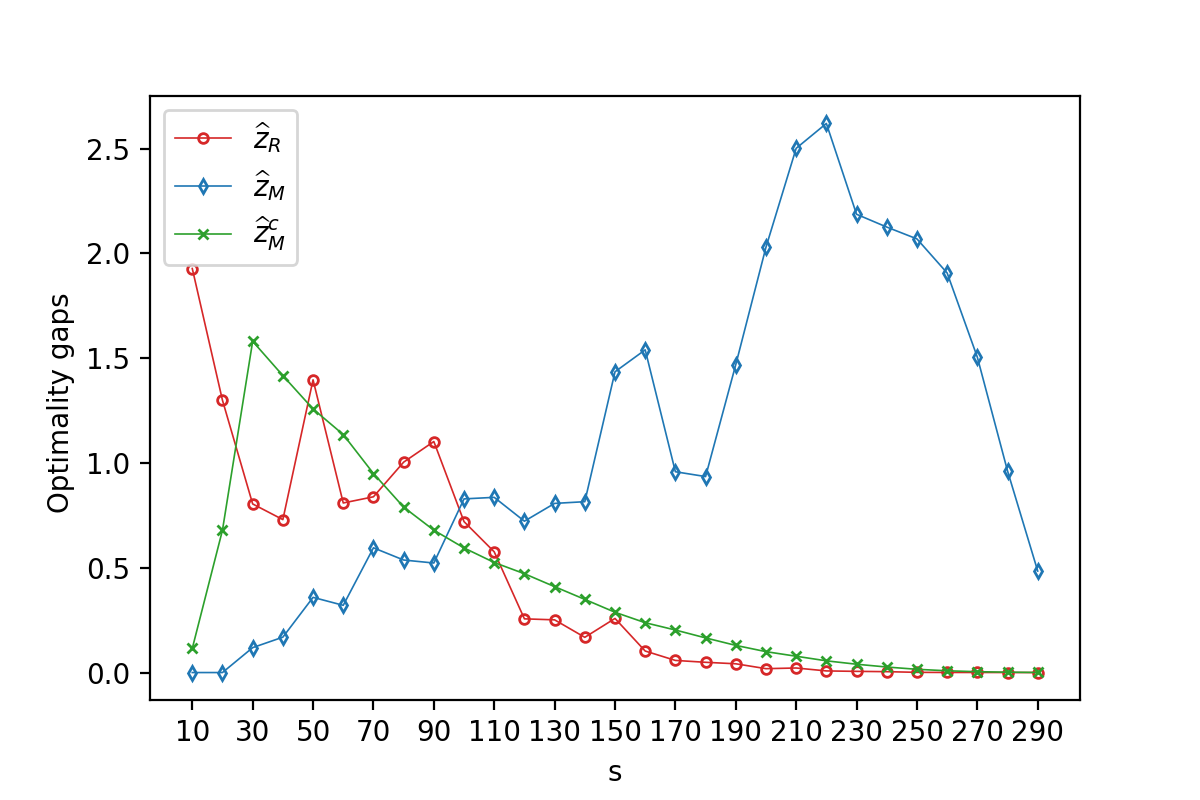}
	}
	\hspace{1em}
	\subfigure[{Small PMU standard deviation}] {
		\centering
		\includegraphics[width=7.5cm,height=5.3cm]{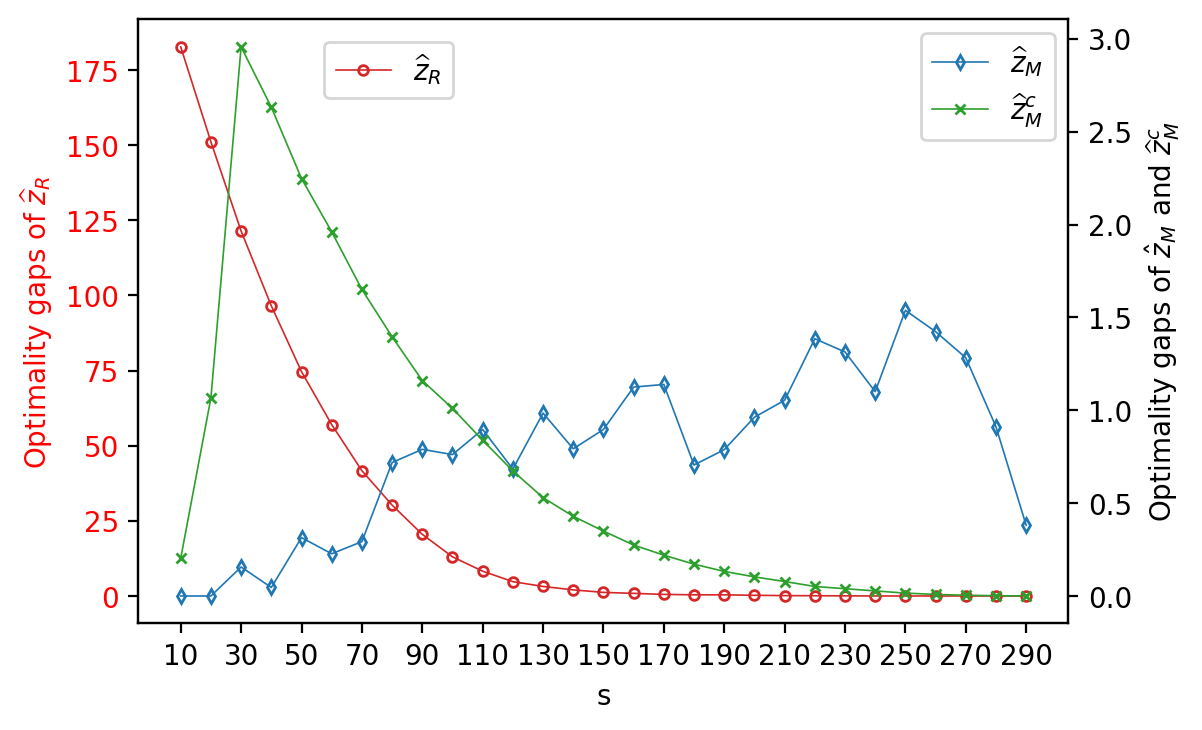}
	}
	\caption{Comparison between continuous-relaxation bounds on IEEE 300-bus instance}\label{fig_gap300}
\end{figure}

\subsection{Testing submodular and optimality cuts: IEEE 118- and 300-bus instances}
Next, 
we tested the submodular and optimality cuts introduced in \Cref{sec:bc}. 
For our experiments, the proposed primal and dual selection strategies of subsets $S^1$ and $S^0$ only depend on the continuous relaxation of \ref{model_mesp}, and its Lagrangian dual, due to their strength for the power-system instances. Similarly, for the selected subsets and their resulting restricted problems \eqref{eq:reduce}, we only computed the Lagrangian dual value $\hat{z}_{M}(S^1, S^0)$ in \eqref{eq:reduceld} to check whether this value is less than a lower bound returned by the local-search \Cref{algo:localsearch}. That is, for a pair $(S^1, S^0)$, we applied the Frank-Wolfe algorithm with a warm start to compute the restricted dual bound $\hat{z}_{M}(S^1, S^0)$, unless the optimality cuts based on $(S^1, S^0)$ could be directly obtained using \Cref{prop:reduceld}.

We considered different settings
 to generate a pair $(S^1, S^0)$ to construct optimality cuts, including: (a) $S^1=\emptyset$, $S^0$ is singleton; (b) $S^1$ is singleton, $S^0 =\emptyset$; (c) $S^1=\emptyset$, $S^0$ has 2 elements; (d) $S^1$ has 2 elements, $S^0=\emptyset$; and (e) both $S^1$ and $S^0$ are singletons. The subsets are set to be small due to the effectiveness of their corresponding optimality cuts and tightness of the continuous-relaxation bounds. 
Besides, except setting (e), as one of subsets is empty, we did not need auxiliary binary variables to construct the optimality cuts \eqref{eq:valid}. We used both the primal and dual strategies for selecting the subsets $S^1$ and $S^0$, because each one has its own advantage.

The well-known variable-fixing technique is a combination of settings (a) and( b), thus it can be viewed as a special case of our optimality cuts. It was originated for \ref{MESP} with \cite{AFLW_IPCO,AFLW_Using} and has recently been applied to \ref{MESP} in \cite{anstreicher2018maximum,anstreicher2020efficient,chen2022computing}. These existing works do not consider optimizing the restricted Lagrangian dual problem to strengthen the upper bound, nor explore the subsets based on the primal continuous-relaxation solution. \Cref{fig_fix118300} presents the number of fixed variables on two IEEE instances by using our strategy (i.e., the integration of settings (a) and (b)), compared to that of \cite{chen2022computing}. Because both methods manage to fix $s$ variables to one and the remaining $n-s$ variables to zero within one second, when $s \le 4$ in 118-bus instance and $s \le 31$ in 300-bus instances, we do not display these results. 
\begin{figure}[ht]
	\centering
	\subfigure[{Number of variables fixed at one for $n$=118}] {\label{fig_fix118300_a}
				\centering
		\includegraphics[width=7.7cm,height=5.3cm]{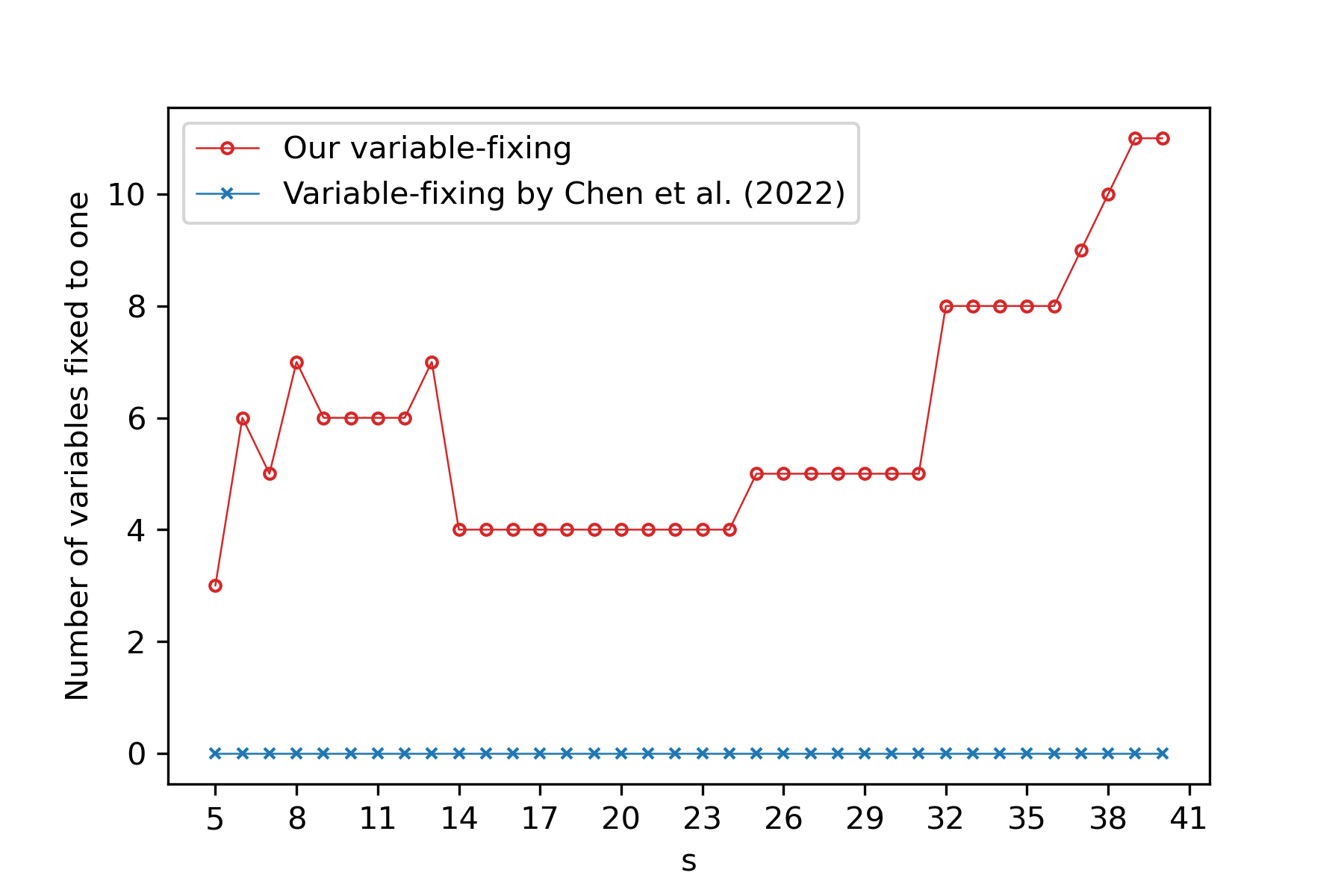}
	}
	\subfigure[{Number of variables fixed at one for $n$=300}] {\label{fig_fix118300_b}
		\includegraphics[width=7.7cm,height=5.3cm]{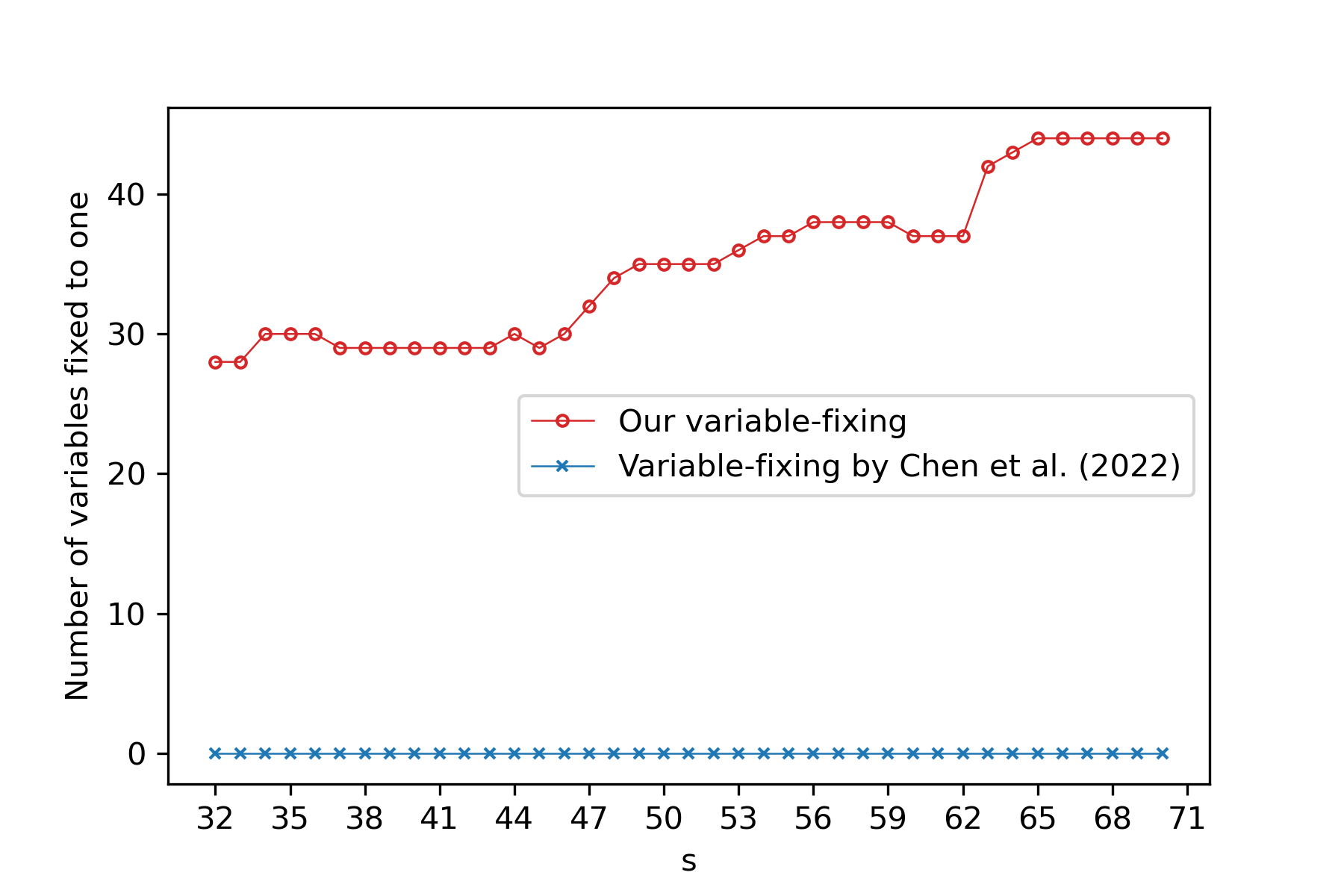}
	}
	\subfigure[{Number of variables fixed at zero for $n$=118}] {\label{fig_fix118300_c}
		\centering
		\includegraphics[width=7.7cm,height=5.3cm]{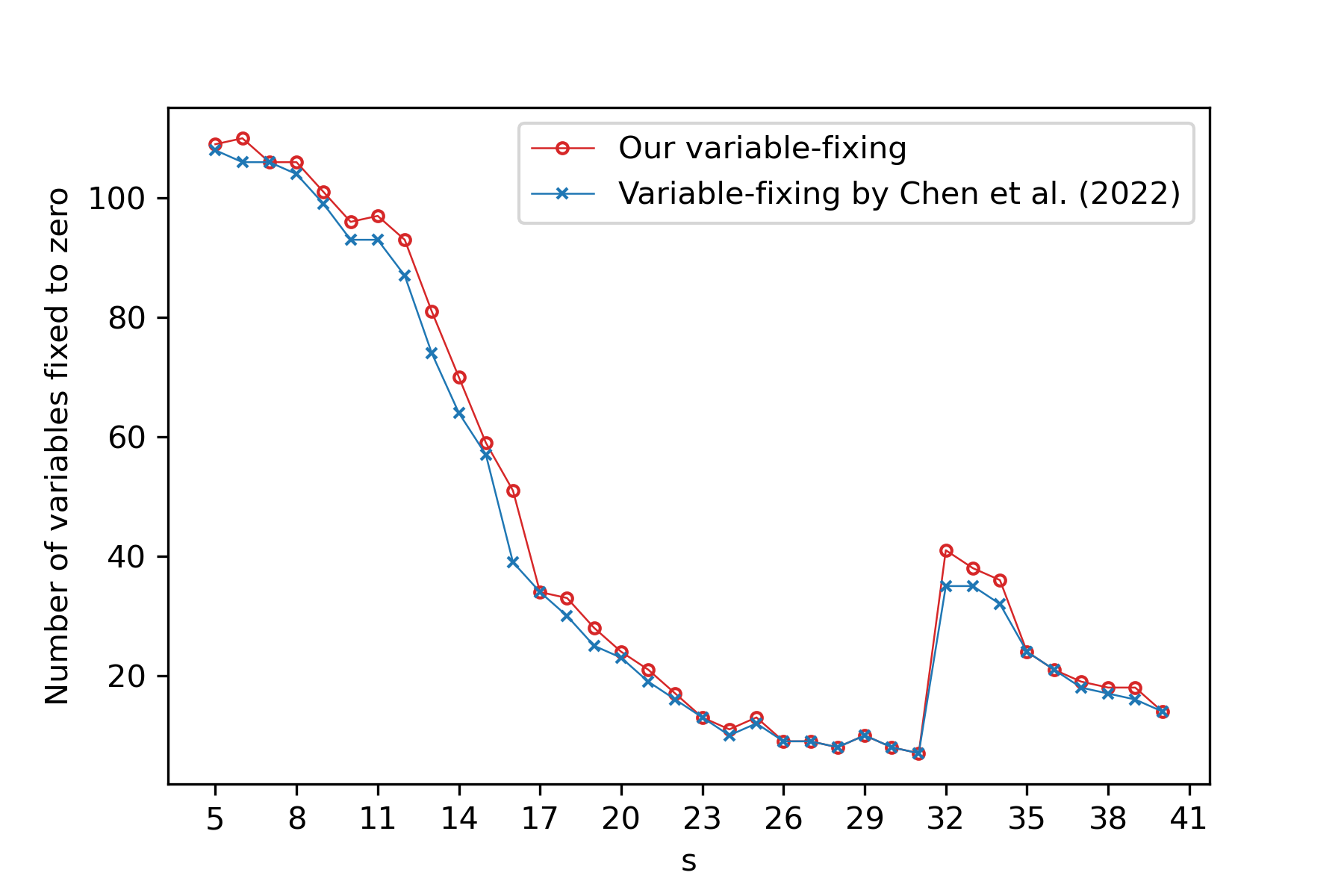}
	}
	\subfigure[{Number of variables fixed at zero for $n$=300}] {\label{fig_fix118300_d}
		\centering
		\includegraphics[width=7.7cm,height=5.3cm]{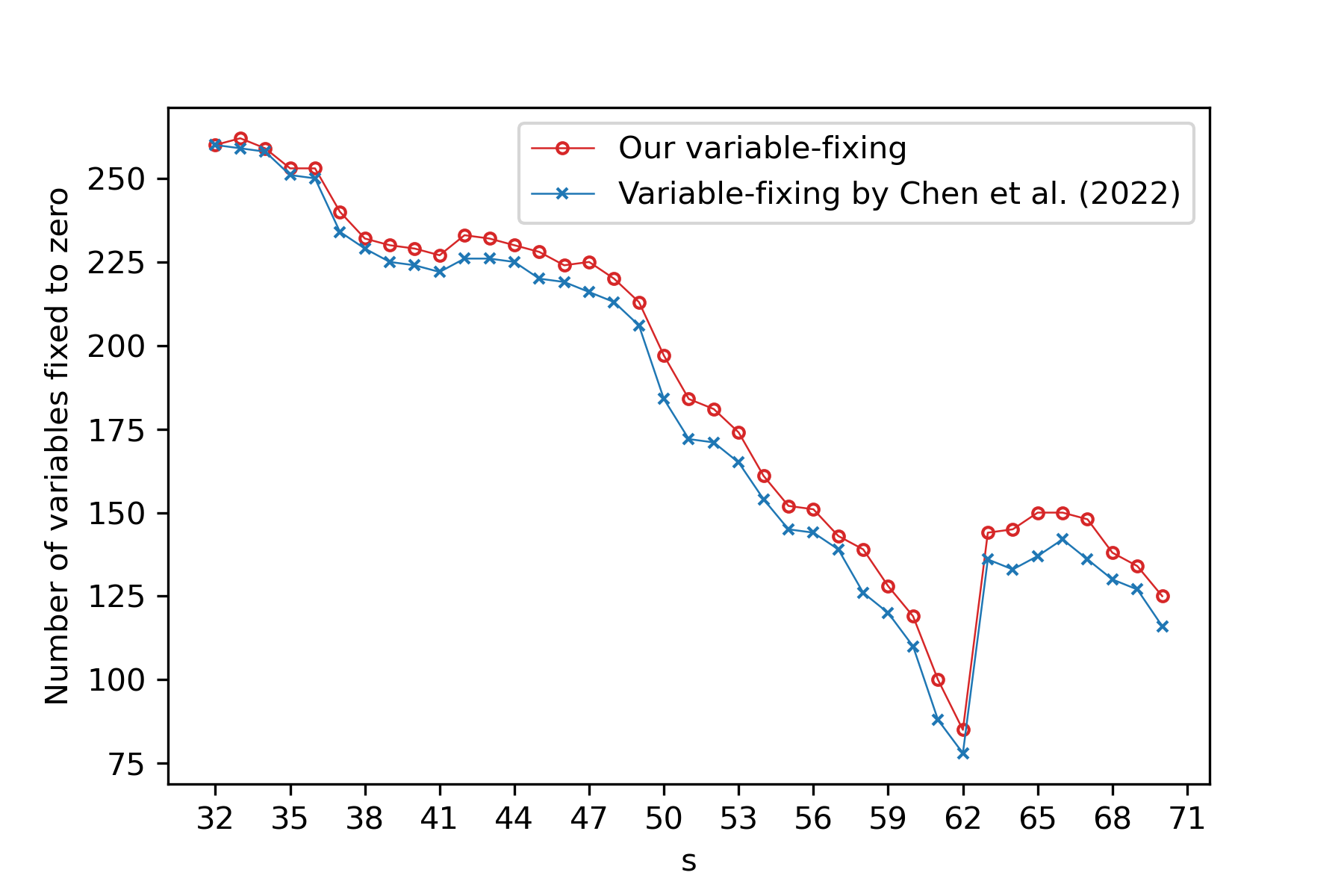}
	}
	\subfigure[{Time for $n$=118}] {\label{fig_fix118300_e}
	\centering
	\includegraphics[width=7.7cm,height=5.3cm]{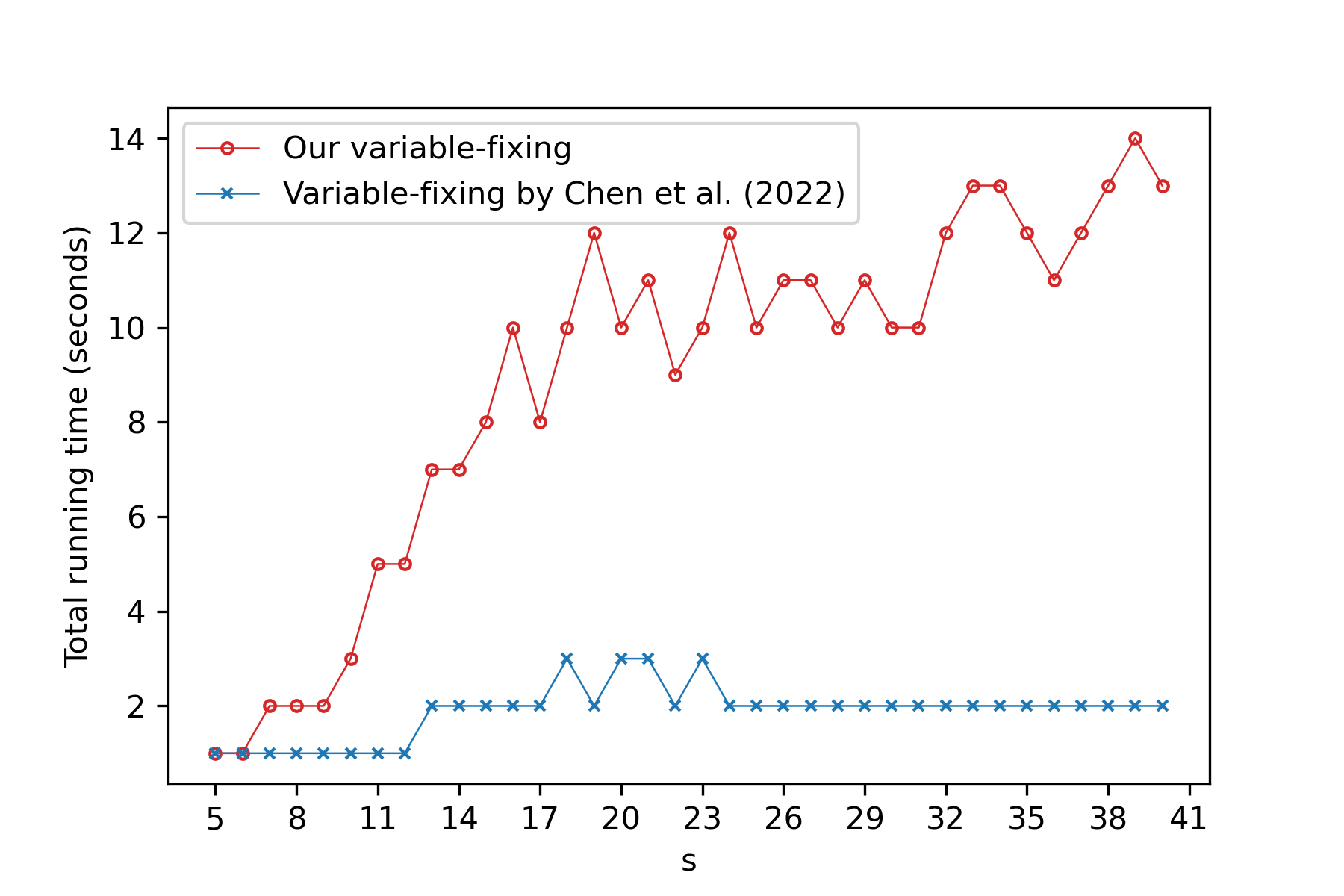}
}
	\subfigure[{Time for $n$=300}] {\label{fig_fix118300_f}
	\centering
	\includegraphics[width=7.7cm,height=5.3cm]{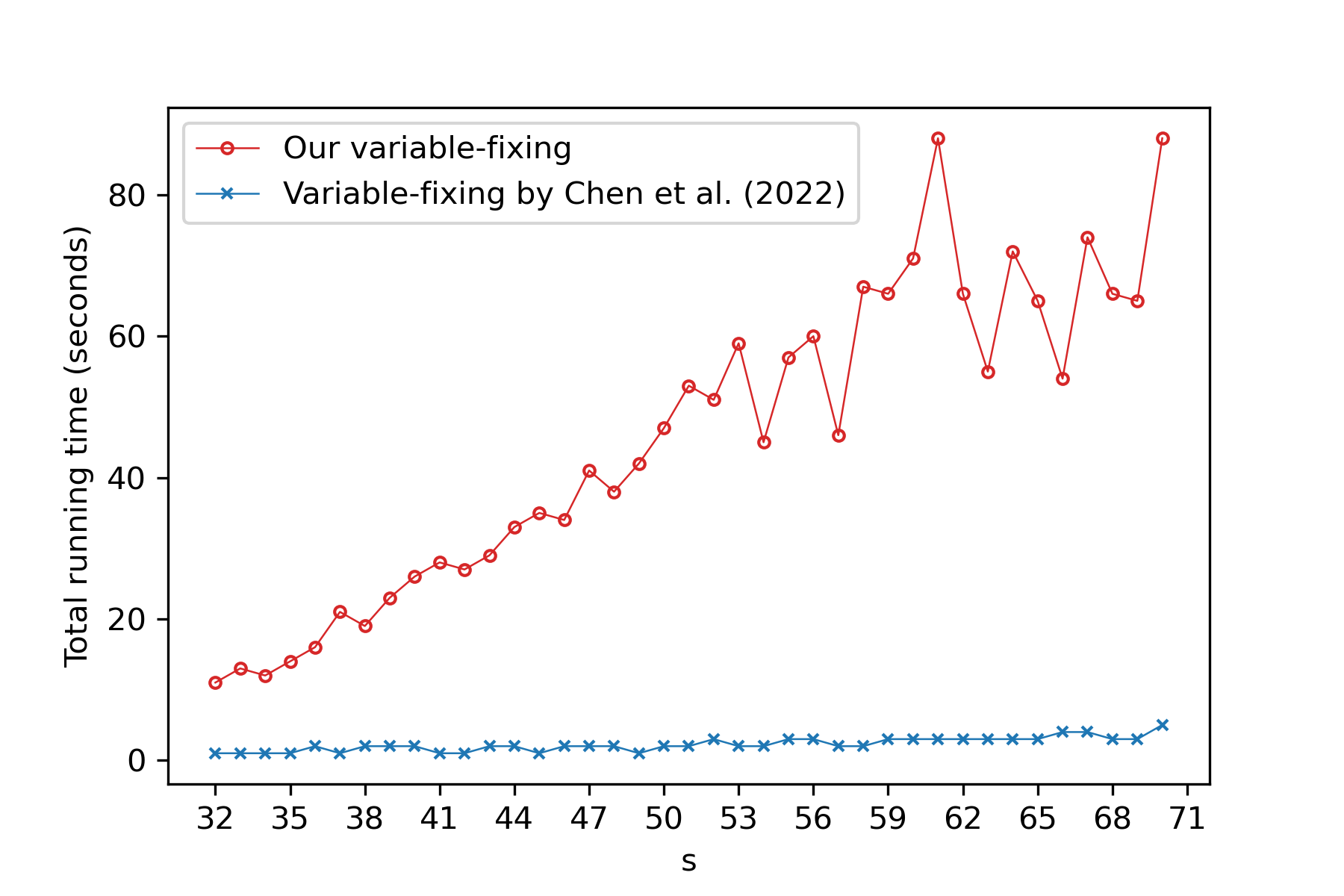}
}
	\caption{Comparison between variable-fixing techniques on IEEE instances}\label{fig_fix118300}
\end{figure}

In \Cref{fig_fix118300}, we see that the number of variables fixed to one tends to steadily increase and the number of variables fixed to zero decreases as $s$ increases. 
From \Cref{fig_fix118300_a} and \Cref{fig_fix118300_b}, we observe that our primal selection strategy successfully finds several optimal variables equal to one, but the variable-fixing by \cite{chen2022computing} fails to fix any variable to one in our test instances. It is worth mentioning that the variables being fixed to one can significantly reduce the problem size and represent the best buses in power systems for installation of new PMU sensors.
In \Cref{fig_fix118300_c} and \Cref{fig_fix118300_d}, our dual strategy slightly outperforms that of \cite{chen2022computing} when fixing variables to zero,
and we are able to fix 13 more variables compared to that of \cite{chen2022computing} for some cases. Finally, we compare the time for both methods in \Cref{fig_fix118300_e} and \Cref{fig_fix118300_f}. We see that the overall performance of our primal and dual strategies is better than that of \cite{chen2022computing}, and our method takes more time but is still negligible compared to what it takes to solve \ref{model} to optimality.

For the other three settings (c), (d), and (e) that involve two binary variables, their number of optimality cuts and overall time are illustrated in \Cref{fig_other118300}. 
We see that when $s$ increases, the number of optimality cuts tends to increase, but as seen in \Cref{fig_fix118300}, the variable-fixing gets worse, implying the fact that optimality cuts can be complementary to each other. Thus, in order to ensure the effectiveness of optimality cuts in our algorithmic framework, we may need to try various types of optimality cuts.

\begin{figure}[hbtp]
	\centering

	\subfigure[{Optimality cuts with two variables for $n$=118}] {
	\includegraphics[width=7.5cm,height=5.3cm]{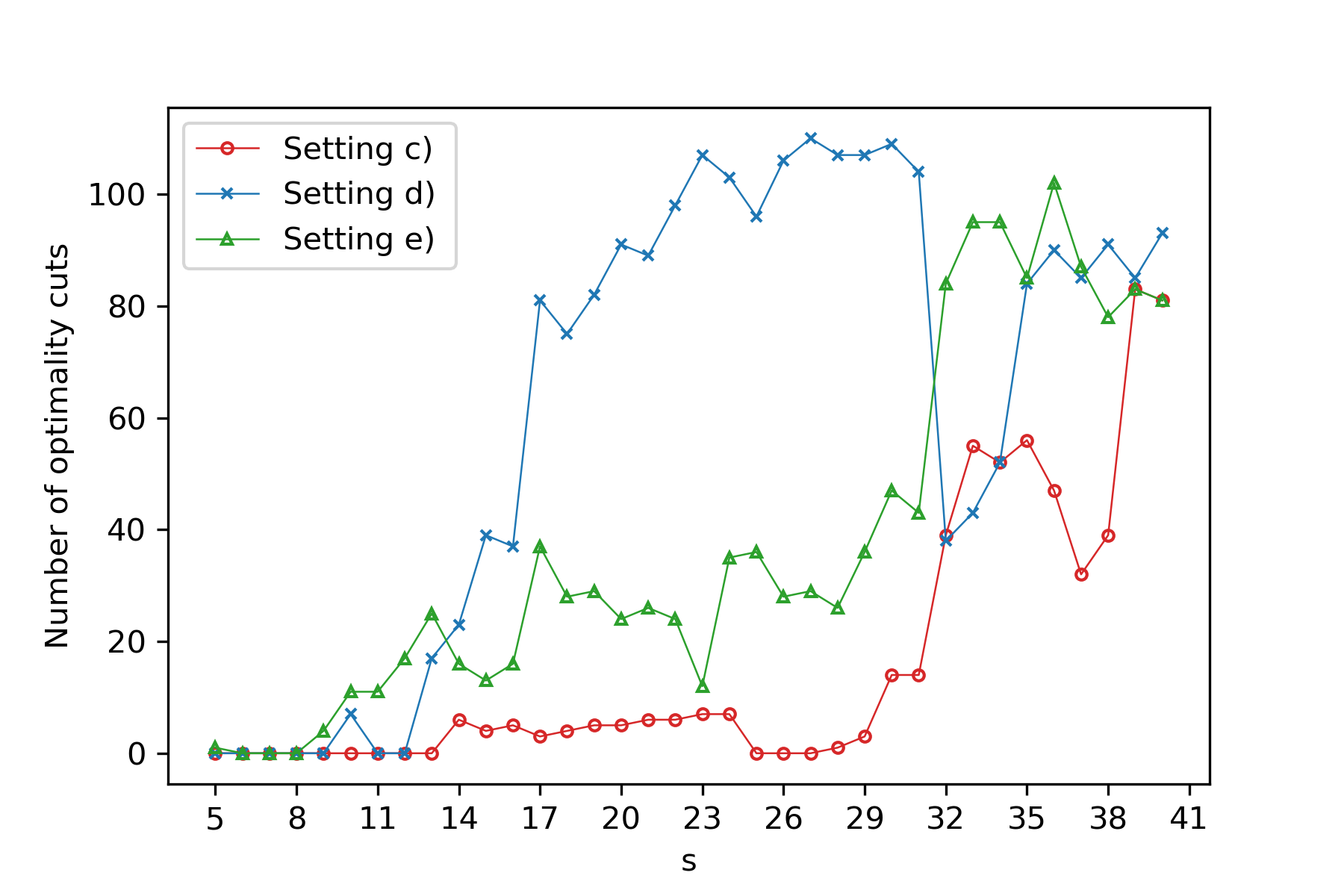}
}
	\subfigure[{Optimality cuts with two variables for $n$=300}] {
	\centering
	\includegraphics[width=7.5cm,height=5.3cm]{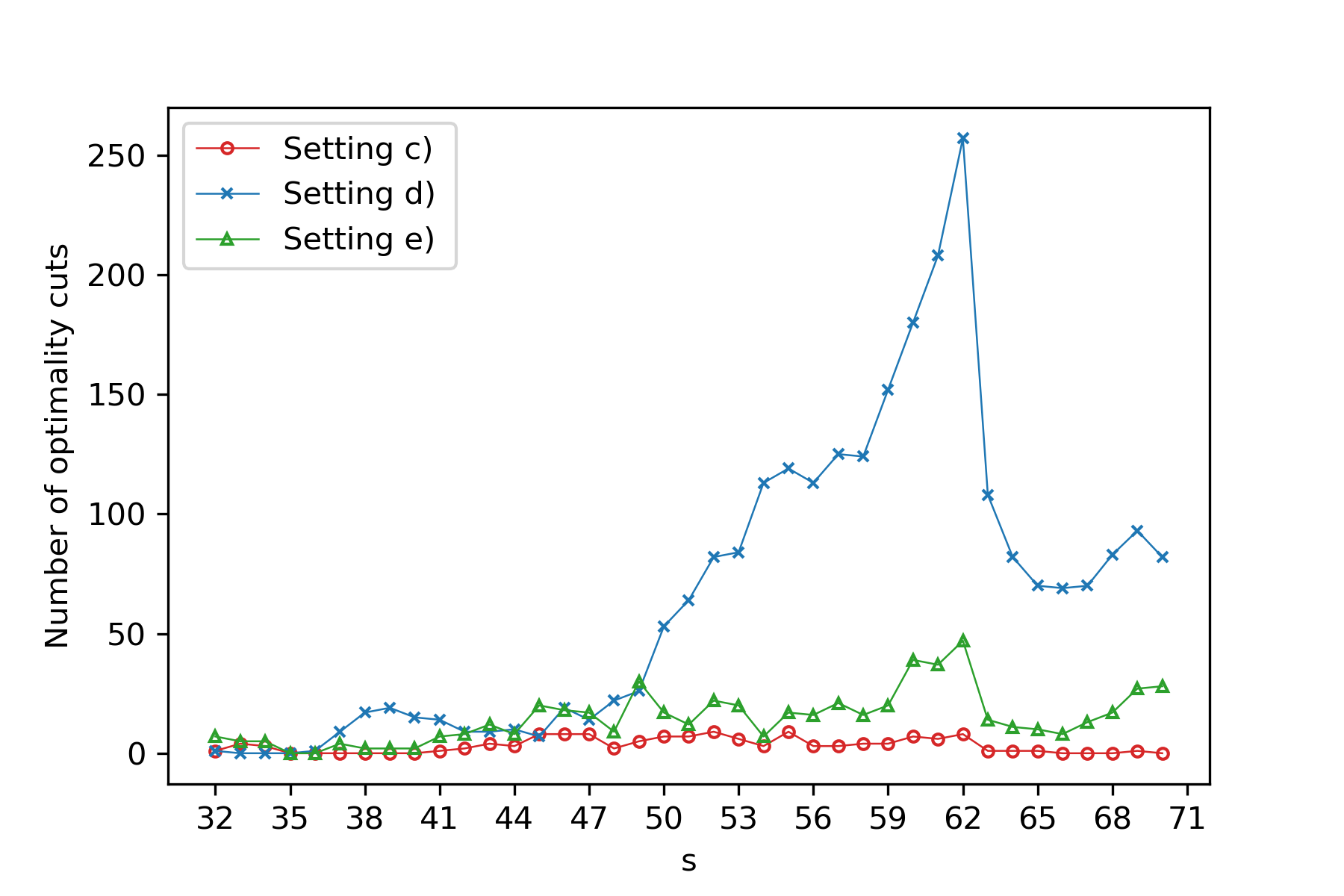}
}

	\subfigure[Time for {$n$=118}] {
	\centering
	\includegraphics[width=7.5cm,height=5.3cm]{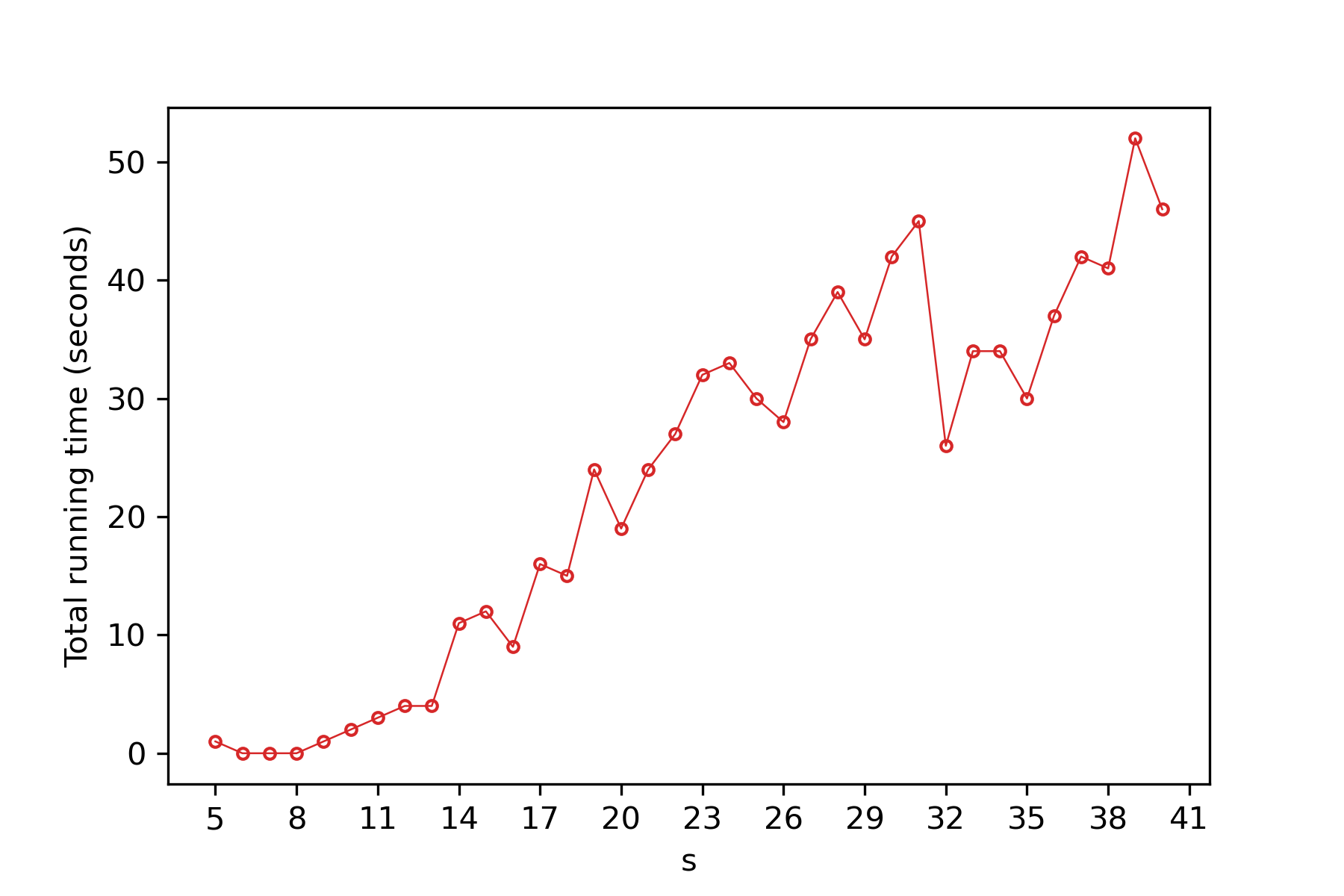}
}
\subfigure[Time for {$n$=300}] {
	\centering
	\includegraphics[width=7.5cm,height=5.3cm]{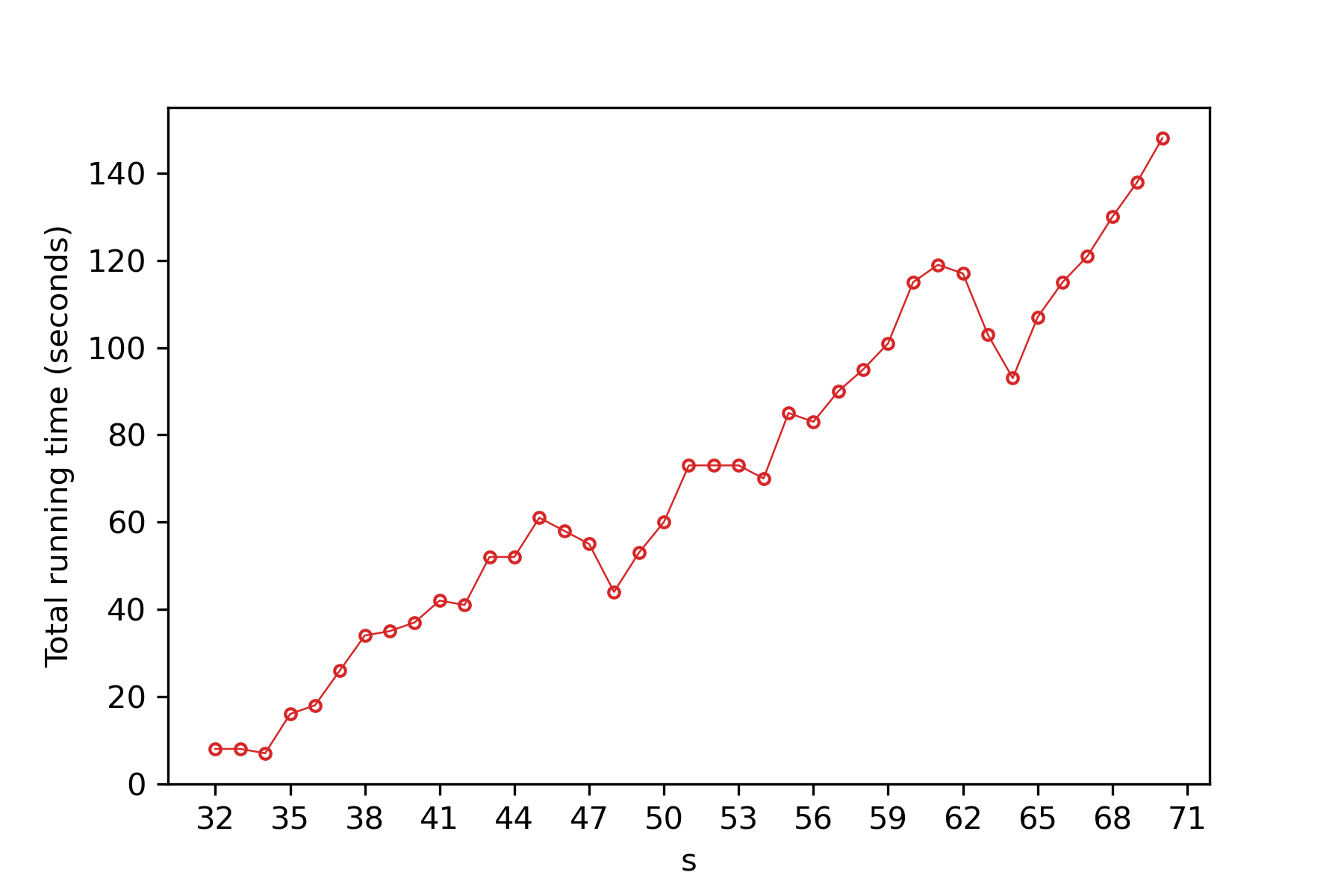}
}
	\caption{Performance of optimality cuts with two binary variables on IEEE instances}\label{fig_other118300}
\end{figure}

\subsection{Exact and approximation algorithms: IEEE 118- and 300-bus instances }
We tested the two IEEE instances and found that with the benefit of submodular and optimality cuts, the LP/NLP B\&B can efficiently solve \ref{model} and approximation algorithms can find high-quality solutions within one minute.

 In \Cref{table:BC300}, we compare the time and the optimality gap of the LP/NLP B\&B algorithm with and without submodular and optimality cuts. For LP/NLP B\&B with submodular and optimality cuts, we also present the time to generate the optimality cuts. The time to generate the submodular cuts is negligible. We used the optimality cuts based on subsets from settings (a) and (b) (i.e., variable-fixing) to reduce the problem size of \ref{model}, 
and we added the other optimality cuts from settings (c), (d), and (e) to the initial MILP relaxation $M^0$ solved in LP/NLP B\&B.
Columns ``$\#$a $-$ $\#$e'' present the number of optimality cuts corresponding to settings (a)$-$(e).
 For each test case, the time limit was set to four hours. 
 
Just using submodular cuts, we can solve five more cases within the time limit, and even for the cases that are not solved to optimality, the MIPgap is dramatically reduced. When additionally, optimality cuts are included, all cases are solved to optimality within the time limit. For the four cases that took a substantial amount of time using only submodular cuts, with optimality cuts we could solve them much faster. We can see that many variables were fixed for all test cases using optimality cuts based on settings (a) and (b).
According to \cite{li2011phasor}, it is difficult to compute the optimal value of \ref{model} for a power system with more than 100 buses. However, our LP/NLP B\&B using the submodular and optimality cuts enables us to effectively solve these cases to optimality. More details on this experiment are in Table \ref{table:suppBC300}, in Appendix \ref{app:supnum}.

\begin{table}[ht] 
	\centering
	\caption{ Impact of submodular and optimality cuts on LP/NLP B\&B on IEEE instances} 
	\begin{threeparttable}
		\setlength{\tabcolsep}{4pt}\renewcommand{\arraystretch}{1.1}
		\begin{tabular}{ c c|r r| r r| r r | r r r r r c}
			\hline 
			\multirow{3}{1em}{$n$}	& \multirow{3}{1em}{$s$} & \multicolumn{2}{c|}{LP/NLP B\&B } & \multicolumn{2}{c|}{ LP/NLP B\&B } & \multicolumn{8}{c}{LP/NLP B\&B } \\
		& & \multicolumn{2}{c|}{ } & \multicolumn{2}{c|}{$+$ submod. cuts}	&\multicolumn{8}{c}{ $+$ submod. and opt. cuts} 
			\\ \cline{3-14} 
			& &\multicolumn{1}{c}{MIPgap\tnote{1}} 	& \multicolumn{1}{c|}{time\tnote{2} } & \multicolumn{1}{c}{MIPgap\tnote{1}} & {time\tnote{2} } &\multicolumn{1}{c}{MIPgap\tnote{1}} & \multicolumn{1}{c|}{time\tnote{2} } &\multicolumn{1}{c}{ $\#$a} &\multicolumn{1}{c}{$\#$b} & \multicolumn{1}{c}{$\#$c} & \multicolumn{1}{c}{$\#$d} & \multicolumn{1}{c}{$\#$e} & \multicolumn{1}{l}{cut time\tnote{3} } \\ 
			\hline
			118 & 5	& 0.00 & 11& 0.00 & 1&0.00& 2 & 3 & 109 & 0 &0 &1 & 2	\\	
			118 & 10& 0.00 & 423& 0.00 & 5&0.00& 5 & 6 & 96 & 0 &7 & 11 & 5		\\	
			118 & 15& 3.18 & - & 0.00 & 1019 &0.00& 34 & 4 & 59& 4 & 39 & 13 & 20	\\	
			118 & 16& 4.27& - &0.00 & 5368&0.00& 58 & 4 & 51& 5 & 37 & 16 & 19	\\	
			118 & 17& 5.19 & - & 0.61 & - &0.00& 344 & 4 & 34& 3 & 81 & 37& 24	\\	
			118 & 18& 4.81 & - & 1.34 & - &0.00& 626 & 4 & 33& 4 & 75 & 28 & 25	\\	
			118 & 19& 8.47 & - &2.16 & - &0.00& 1542& 4 & 28& 5 & 82 & 29 & 36	\\	
			118 & 20& 11.11 & - & 2.20 & - &0.00& 7111& 4 & 24& 5 & 91 & 24 & 29	\\	
			300& 35 & 7.76 & - & 0.00 & 33 &0.00& 31 & 30 & 253 &0 & 0 & 0 & 30 \\
			300& 40 & 83.52 & - &0.00 & 505 &0.00& 69 &29 & 229 &0 & 15 & 2 & 63 \\
			300& 45 &48.06 & - & 0.00 & 1334 &0.00& 109 &29 & 228 & 8 & 7 & 20 & 96 \\
			300& 50 &52.67 & - & 0.88& - &0.00& 135 &35 & 197 &7& 53 & 17 & 107 \\
			300& 51 & 40.41 & - & 0.90 & - &0.00&218 &35 & 184 &7 & 64& 12 & 126 \\
			300& 52 & 140.05 & - & 2.44 & - &0.00& 219 &35 & 181 &9& 82 & 22 & 124 \\
			300& 53 & 186.13 & - & 3.27 & - &0.00& 328 &36 & 174 &6 &84 & 20 & 132 \\
			300& 54 & 168.61 & - & 3.55 & - &0.00& 560 &37 & 161 &3 & 113 & 7 & 115 \\
			300& 55 & 158.67 & - & 4.60 & - &0.00& 1240 &37 & 152 &9 & 119 & 17 & 142 \\
			300& 56 & 143.81& - & 5.29& - &0.00& 5072 &38 & 151 &3 & 113 & 16 & 143 \\
			300& 57 & 129.59 & - & 4.57 & - &0.00& 11462 &38 & 143 &3& 125 & 21 & 136 \\		
			\hline
		\end{tabular}%
		\begin{tablenotes}
			\item[1] MIPgap = upper bound $-$ best feasible-solution value (both obtained by LP/NLP B\&B) 
			\item[2] total time in seconds; ``-'': instance not solved within four hours 
			\item[3] time to generate optimality cuts in seconds
		\end{tablenotes}
	\end{threeparttable}
	\label{table:BC300}
\end{table}

Using the optimal values of \ref{model} in \Cref{table:BC300}, we evaluate the bounds given by the continuous relaxation of \ref{model_mesp} and the performance of the approximation algorithms on the same testing cases. The computational results are displayed in \Cref{table:approx}. The ``gap'' for \ref{model_mesp} is the difference between the continuous relaxation value $\hat{z}_M$ and the optimal value $z^*$. The ``gap'' for the approximation algorithms is the difference between $z^*$ and the solution returned by them. For comparison purposes, we also tested the greedy algorithm for solving the PMU placement problem in power systems studied in \cite{li2011phasor, li2012information}. 
We feed the continuous-relaxation solution of \ref{model_mesp} to \Cref{algo:sampling}, and thus the time for \Cref{algo:sampling} includes that of solving the continuous relaxation.
We see that the upper bound given by the solution of the continuous-relaxation of \ref{model_mesp} is always close to the optimal value. We also see that \Cref{algo:localsearch} consistently gives better times and gaps than \Cref{algo:sampling}. 
Although \Cref{algo:localsearch} and the greedy algorithm have the same gap for each instance, in our implementations \Cref{algo:localsearch} is much faster due to the rank-one updating technique. 

\begin{table}[ht] 
	\centering
	\caption{Relaxation and approximation algorithms on IEEE instances} 
	\begin{threeparttable}
 		\setlength{\tabcolsep}{3.5pt}\renewcommand{\arraystretch}{1.1}
		\begin{tabular}{c c| r r| r r| r r| r r }
			\hline 
		 \multirow{3}{1em}{$n$}	& \multirow{3}{1em}{$s$} 	& \multicolumn{2}{c|}{\ref{model_mesp}} &
			\multicolumn{2}{c|}{local-search} & \multicolumn{2}{c|}{ sampling} & \multicolumn{2}{c}{greedy}\\
		&	& \multicolumn{2}{c|}{relaxation} &
			\multicolumn{2}{c|}{\Cref{algo:localsearch}} & \multicolumn{2}{c|}{\Cref{algo:sampling} } & \multicolumn{2}{c}{algorithm}
			\\ \cline{3-10} 
		& & \multicolumn{1}{c}{gap} & {time\tnote{1} } & \multicolumn{1}{c}{gap} & {time\tnote{1} } & \multicolumn{1}{c}{gap} & {time\tnote{1} } & \multicolumn{1}{c}{gap} & {time\tnote{1} }\\ 
			\hline
		118& 5&0.10& $< 1$ &0.00& $<1$ & 0.00 & 5 &0.00& $<1$ \\
		118& 10 & 0.16 &1 & 0.00& $<1$ & 0.00& 5 & 0.00& $<1$\\
		118& 15 & 0.42& 1 &0.00& $<1$ & 0.08& 6 &0.00& $<1$ \\	
		118& 16 & 0.48& 1 &0.00& $<1$ & 0.32 & 6 &0.00& $<1$ \\		
		118& 17 & 0.55 &1 &0.00& $<1$ & 0.17 & 6 &0.00& $<1$ \\	
		118& 18 & 0.57 & 2 &0.00& $<1$ & 0.23 & 7 &0.00& $<1$ \\	
		118& 19 & 0.60 &1 &0.00& $<1$ & 0.79 & 6&0.00& $<1$ \\
		118& 20 & 0.64 & 2 &0.00& $<1$ & 0.87 & 6 &0.00& $<1$\\			
300& 35 & 0.11 & 6 &0.00 & $<1$ & 0.00 & 34 &0.00 & 14 \\
300& 40 &0.31 & 11 &0.00 & $<1$ & 2.04 & 46 &0.00 & 13 \\
300& 45 &0.26 & 5 &0.00 & $<1$ & 4.40& 41 &0.00 & 12\\
300& 50 &0.37 & 5 &0.00 & $<1$ & 3.98 & 37 &0.00 & 11 \\
300& 51 &0.41 & 7 &0.00 & $<1$ & 2.25 & 39 &0.00 & 12 \\
300& 52 &0.40 & 6 &0.00 & $<1$ & 4.18 & 45 &0.00 & 11 \\
300& 53 &0.42 & 8 &0.00 & $<1$ & 4.36 & 45 &0.00 & 12 \\
300& 54 &0.46 & 10 &0.00 & $<1$ & 3.56 & 38&0.00 & 12\\
300& 55 &0.47 & 10 &0.00 & $<1$ & 2.29 & 46 &0.00 & 12\\
300& 56 &0.47 & 14 &0.00 & $<1$ & 7.41 & 42 &0.00 & 13 \\
300& 57 &0.46 & 11 &0.05 & $<1$ & 5.07 & 46 &0.05 & 17 \\	
			\hline
		\end{tabular}%
		\begin{tablenotes}
			\item[1] time in seconds; ``$<1$'': time less than one second
		\end{tablenotes}
	\end{threeparttable}
	\label{table:approx}
\end{table}

\subsection{Scalability and near-optimality of approximation algorithms: IEEE 118-, 300-, and Polish 2383-bus instances}
To better understand the overall performance of our approximation algorithms, we tested cases with a full range of selected data points (i.e., $s$) on the two IEEE instances and on a larger instance with 2383 buses, one of the largest power systems in the literature (see \cite{zimmerman1997matpower}). Note that LP/NLP B\&B is unable to solve many of the large-$s$ cases to optimality within a four-hour time limit. Thus, here we use the continuous-relaxation value $\hat{z}_M$ of \ref{model_mesp} to evaluate the quality of the feasible solutions obtained using the approximation algorithms. The value for ``gap'' in \Cref{table:polish} and \Cref{fig_large} is equal to the difference between $\hat{z}_M$ and the output values produced by the approximation algorithm. 

We compare our proposed approximation algorithms with the greedy algorithm in \Cref{fig_large}, for which we test 23 cases with number of installed PMUs from $5, 10, \ldots, 115$ and 29 cases with number of installed PMUs from $10, 20, \ldots, 290 $
 for the two IEEE instances with 118 and 300 buses, respectively. We see that \Cref{algo:localsearch} clearly outperforms the other two, considering the gaps and times.
 
\begin{figure}[hbtp]
	\centering
	
	\subfigure[Gaps for $n$=118] {
		\includegraphics[width=7.5cm,height=5.3cm]{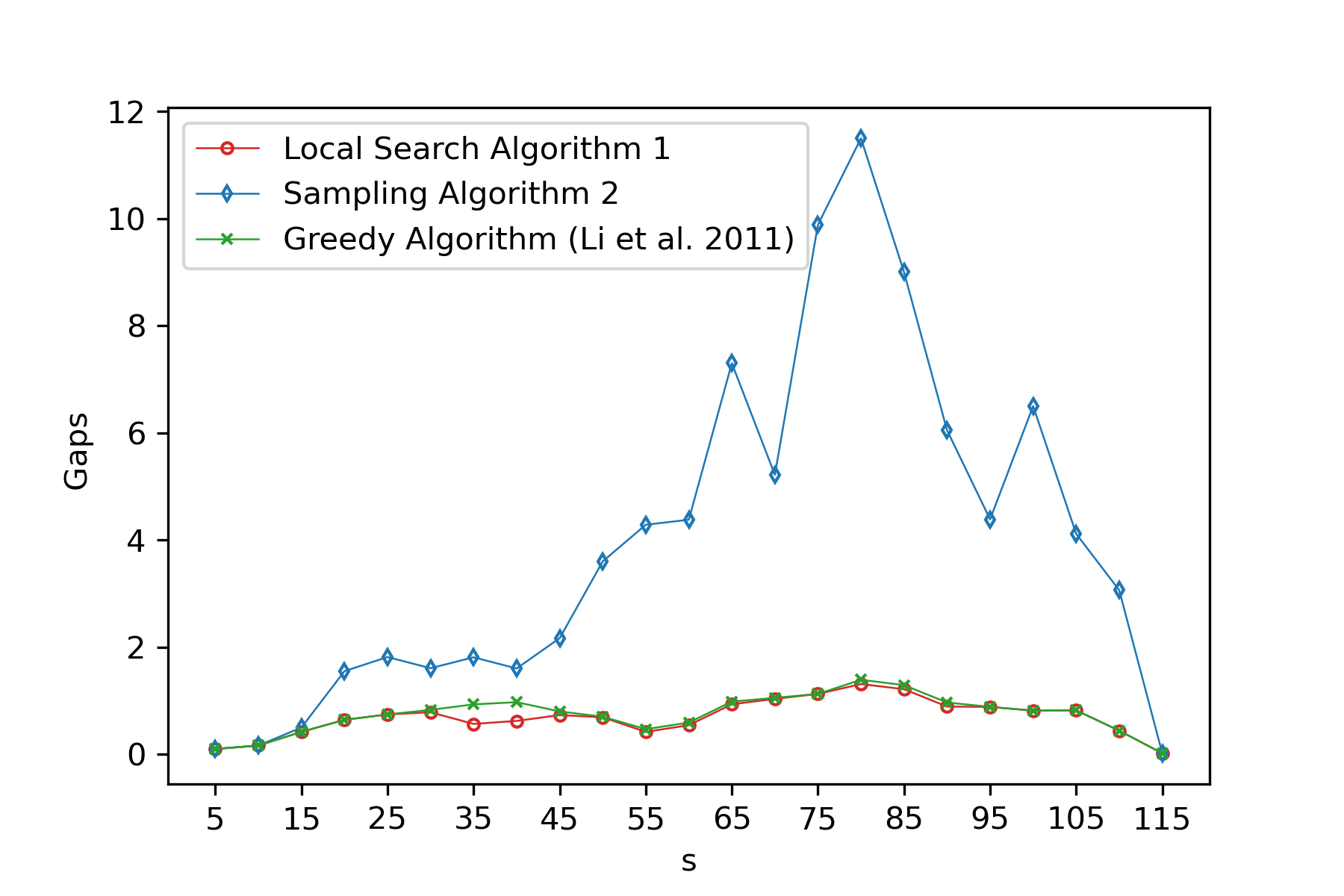}
	}
	\subfigure[Gaps for $n$=300] {
		\centering
		\includegraphics[width=7.5cm,height=5.3cm]{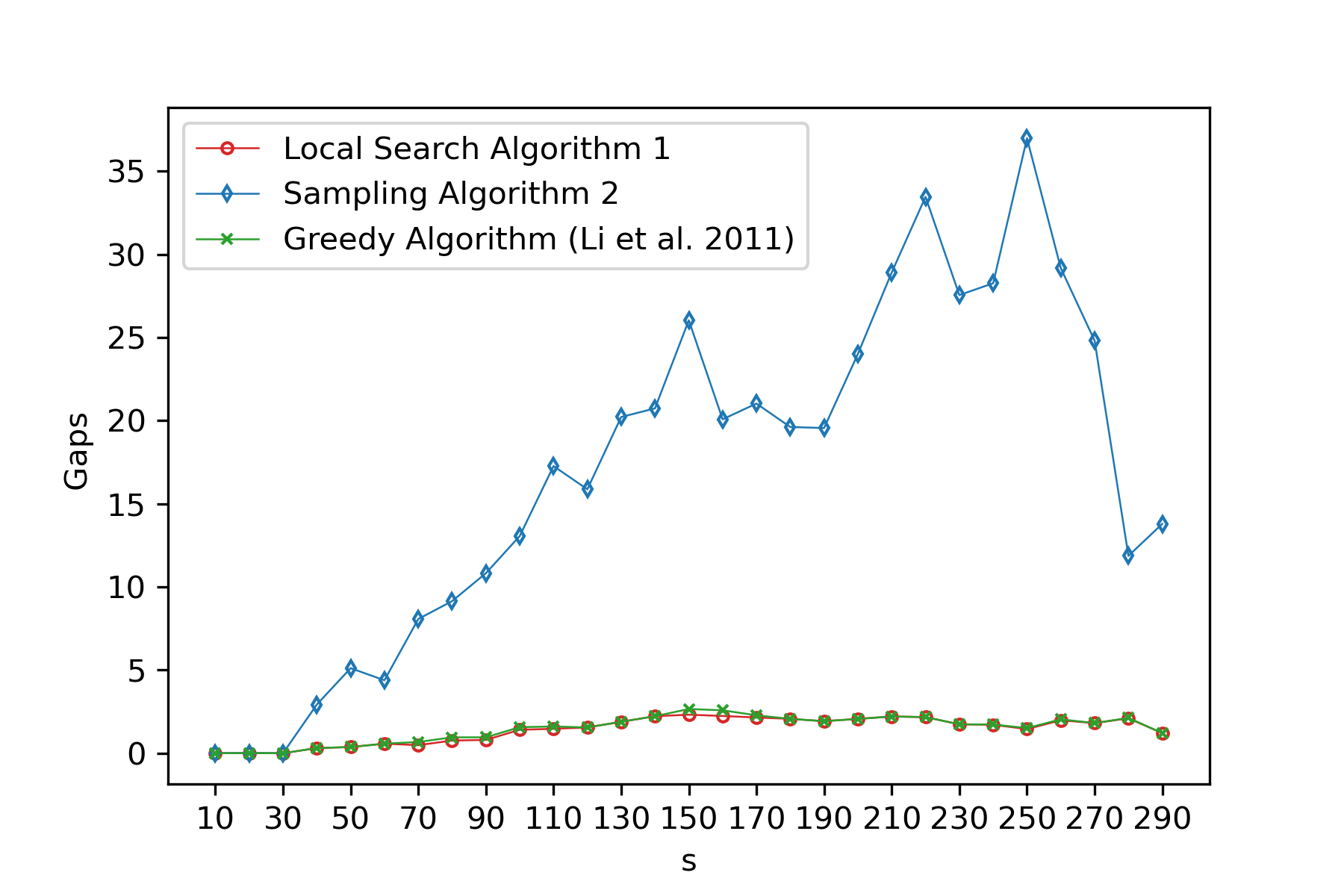}
	}
	
	\subfigure[Time for $n$=118] {
		\centering
		\includegraphics[width=7.5cm,height=5.3cm]{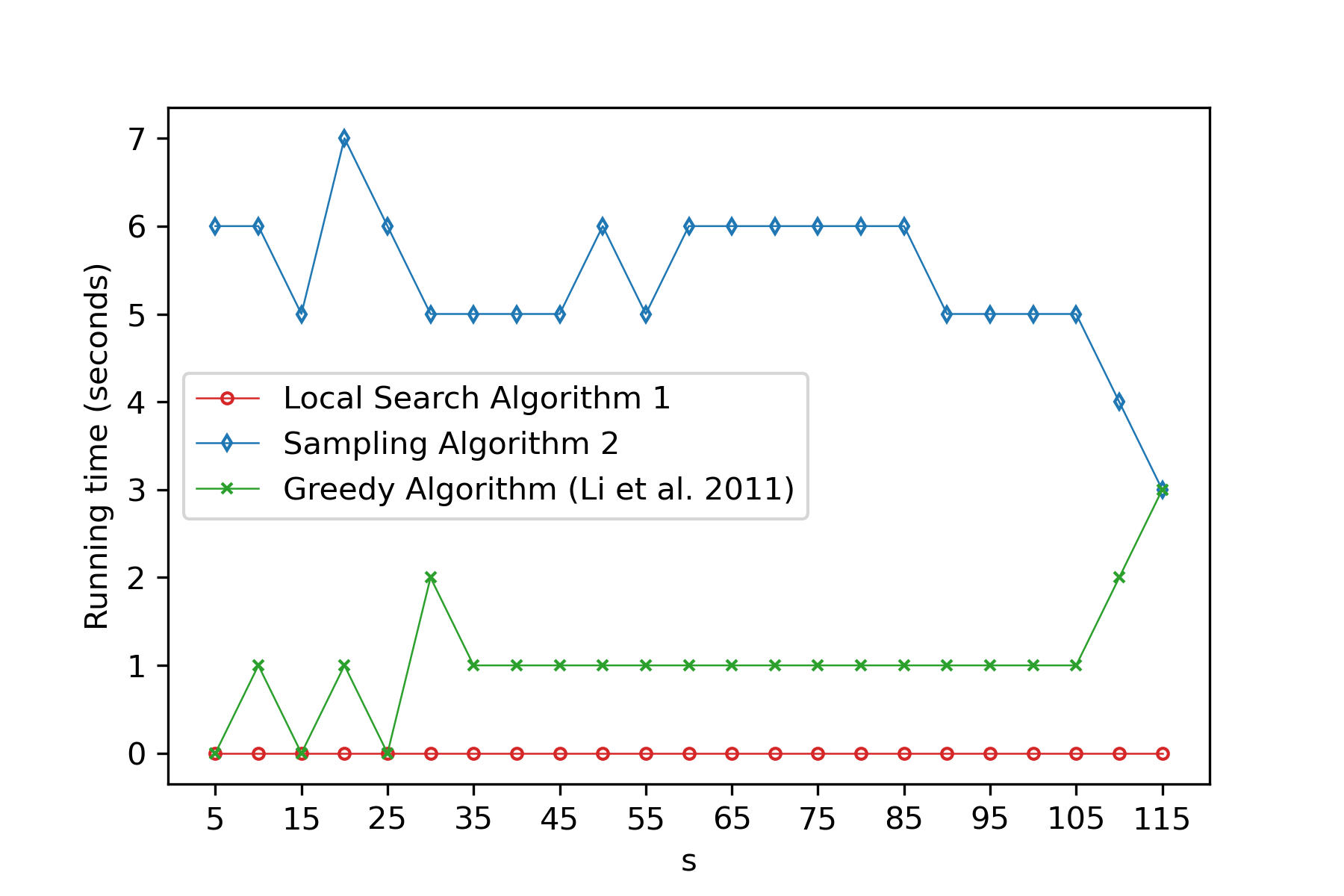}
	}
	\subfigure[Time for $n$=300] {
		\centering
		\includegraphics[width=7.5cm,height=5.3cm]{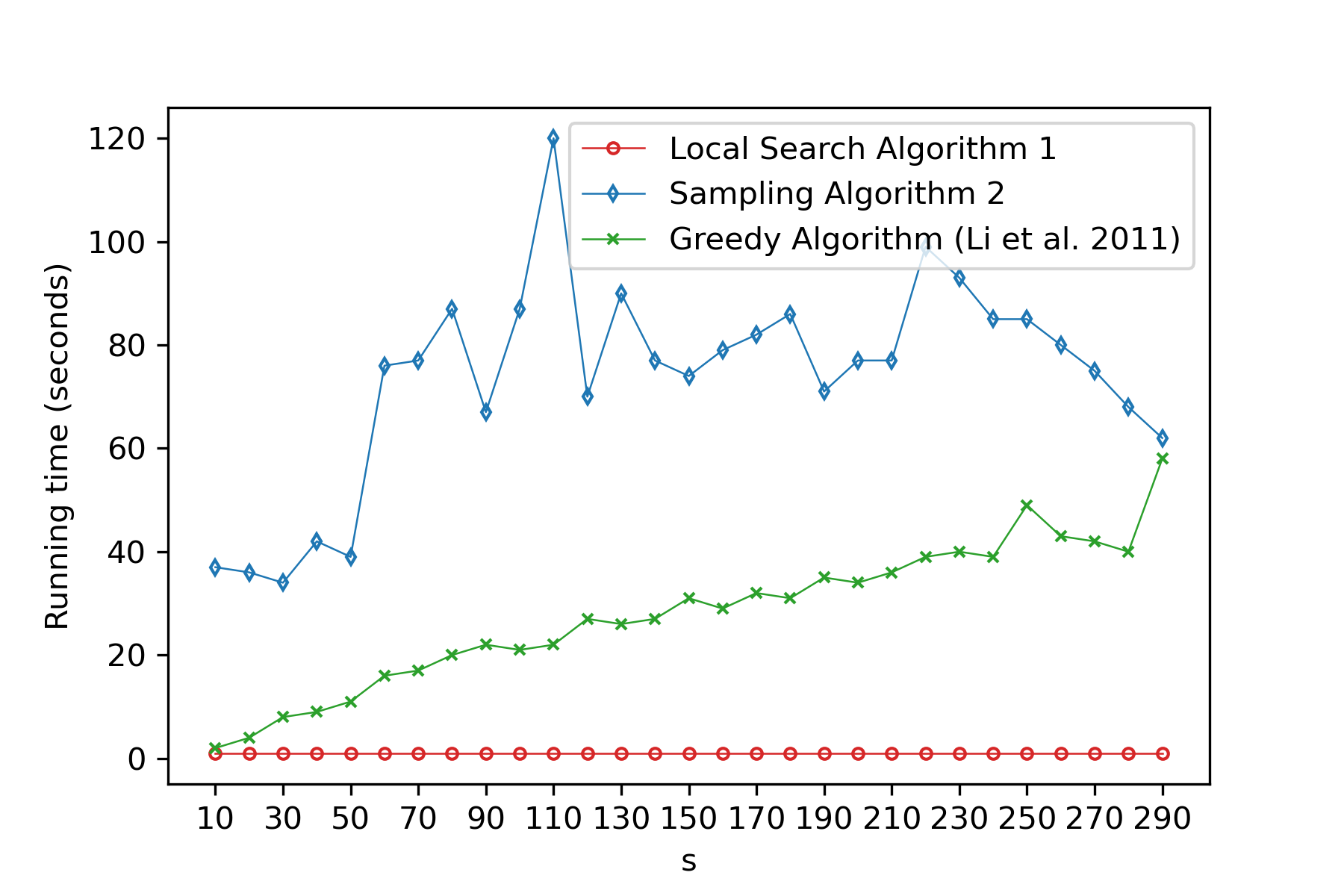}
	}
	
\caption{Comparison between approximation algorithms on IEEE instances}\label{fig_large}
\end{figure}

In Table \ref{table:polish}, we present results where we tested a large-scale instance with 2383 buses in the Polish power system (see \cite{zimmerman1997matpower}). The greedy algorithm is omitted in this table because it could not finish on these cases within four hours. On the other hand, our proposed approximation algorithms scale well. It is evident that 
\Cref{algo:localsearch} performs very well in time and solution quality, dominating the performance of \Cref{algo:sampling} in both respects. Thus, we recommend using \Cref{algo:localsearch} (with an efficient implementation) to solve practical PMU placement problems. We note that the gaps for \Cref{algo:localsearch} could be quite small if we could compare to the optimal value. Another observation, is that the small gaps for \Cref{algo:localsearch} establish the quality of the \ref{model_mesp} relaxation on these cases. 

\begin{table}[ht] 
	\centering
	\caption{Relaxation and approximation algorithms on the Polish instance} 
	\begin{threeparttable}
		\setlength{\tabcolsep}{3pt}\renewcommand{\arraystretch}{1.2}
		\begin{tabular}{c c|r r| r r| r r }
			\hline 
		 \multirow{3}{1em}{$n$}	& \multirow{3}{1em}{$s$} & \multicolumn{2}{c|}{\ref{model_mesp}} & \multicolumn{2}{c|}{local-search} &
			\multicolumn{2}{c}{sampling}\\
		&	& \multicolumn{2}{c|}{relaxation} & \multicolumn{2}{c|}{\Cref{algo:localsearch}} &
			\multicolumn{2}{c}{\Cref{algo:sampling}}
			\\ \cline{3-8} 
		&	& \multicolumn{1}{c}{$\hat{z}_M$} & {time\tnote{1} } &
			{gap} & {time\tnote{1} } & {gap} & {time\tnote{1} } \\ 
			\hline
		2383&	75 & 198.37 & 421& 0.98 & 9& 14.24 & 424 \\
				2383&	100 &614.00 & 545 & 1.24 & 17& 33.82 & 552 \\
				2383&	125 & 1024.55 & 608 & 1.43 & 17& 41.81 & 675 \\
				2383&	150 & 1430.34& 666 & 1.93 & 17&62.18& 740 \\
				2383&175 & 1831.40 & 807 & 2.58 & 25& 78.23 & 853 \\
				2383&200 & 2227.53 & 794 & 3.25 & 28& 61.03 & 822 \\
				2383&225 & 2620.43 & 1064 & 4.08 & 31& 93.11 & 1101 \\
				2383&250 & 3010.08 & 1301& 4.95 & 37& 105.06& 1368 \\
				2383&275 & 3396.66 & 1333 & 5.15 & 46&122.23 & 1426\\
				2383&300 & 3780.59 & 1244 & 5.41 & 47& 135.21& 1448 \\
				2383&325 & 4162.01 & 1293 & 6.11 & 45& 130.45& 1514 \\
				2383&350 & 4540.67 &1392 & 6.92 & 49& 149.43& 1661 \\
				2383&375 & 4916.61 & 1510 & 8.02 & 51& 123.27& 1768 \\
			\hline
		\end{tabular}%
		\begin{tablenotes}
			\item[1] time in seconds
		\end{tablenotes} 
	\end{threeparttable}
	\label{table:polish}
\end{table}

\FloatBarrier

\section{Conclusion}\label{sec:conclusion}
We studied the D-optimal data fusion problem, which can be of vital importance in many fields, such as monitoring, operation, planning, control, and decision making of various environmental, structural, agricultural, food processing, and manufacturing systems. The developed exact and approximation algorithms come with theoretical performance guarantees. Our numerical study confirms the efficacy of the proposed algorithms. We expect the proposed methods can be applicable to many machine learning problems under a cardinality constraint such as sparse PCA, sparse regression, sparse matrix completion, and so on.

\vbox{
\ACKNOWLEDGMENT{%
Y. Li and W. Xie were supported in part by NSF grants 2046426 and 2153607.
M. Fampa was supported in part by CNPq grants 305444/2019-0 and 434683/2018-3.
J. Lee was supported in part by AFOSR grants FA9550-19-1-0175
and FA9550-22-1-0172. 
}
}
\FloatBarrier
\bibliographystyle{alpha}
\bibliography{reference}
\titleformat{\section}{\large\bfseries}{\appendixname~\thesection 
	.}{0.5em}{}
\begin{appendices}
	\section{Proofs}\label{proofs}


	\subsection{Proof of \Cref{them:local}} \label{proof:them:local}
	
	\themlocal*
	\begin{proof}
		We will prove the two approximation bounds (i) $n \log(1+(\bar{s}/n)\sigma_{\max}^2/(1+\sigma_{\max}) )$ and (ii) $\bar{s}\log(\bar{s})$, using \ref{model_dopt}, \ref{model_mesp}, and their complements, respectively.
		
		\begin{enumerate}[(i)]
	\item $d \log(1+(\bar{s}/d)\sigma_{\max}^2/(1+\sigma_{\max}) )$-approximation bound.
	
Using the Lagrangian dual problem \eqref{eq:dopt_ld} of R-DDF $\eqref{model_dopt}$,	let us first show a non-symmetric approximation bound, $d \log(1+(\bar{s}/d)\sigma_{\max}^2/(1+\sigma_{\max}) )$.

Given the output $\hat{S}$ of the local-search \Cref{algo:localsearch}, let $\bm{X}= \sum_{\ell \in \hat{S}} \bm{b}_{\ell}\bm{b}_{\ell}^{\top}$ and $\bm{\Lambda} = (\bm{I}_d+\bm{X})^{-1}$. Because $\hat{S}$ is a locally-optimal solution of \ref{model}, for any $i \in \hat{S}$ and $j\in [n]\backslash \hat{S}$, following the inequalities in \eqref{eq:dopt_local}, the local-optimality conditions can be written as
\begin{equation} \label{eq:lsopt}
\begin{aligned}
	\bm{b}_j^{\top} \bm{\Lambda} \bm{b}_j - \bm{b}_i^{\top} \bm{\Lambda} \bm{b}_i \bm{b}_j^{\top} \bm{\Lambda} \bm{b}_j + \bm{b}_i^{\top} \bm{\Lambda} \bm{b}_j \bm{b}_j^{\top} \bm{\Lambda} \bm{b}_i \le \bm{b}_i^{\top} \bm{\Lambda} \bm{b}_i. 
\end{aligned}
\end{equation}

Next, we will explore the inequality \eqref{eq:lsopt} to construct a feasible solution to the Lagrangian dual problem \eqref{eq:dopt_ld} of the R-DDF. 
For any $i \in \hat{S}$ and $j\in [n]\backslash \hat{S}$, by dropping the nonnegative term $\bm{b}_i^{\top} \bm{\Lambda} \bm{b}_j \bm{b}_j^{\top} \bm{\Lambda} \bm{b}_i$ in the left-hand side of inequality \eqref{eq:dopt_local}, we obtain
\begin{align} \label{eq:upper1}
\bm{b}_j^{\top} \bm{\Lambda} \bm{b}_j \le \frac{1}{1- \bm{b}_i^{\top} \bm{\Lambda} \bm{b}_i} \bm{b}_i^{\top} \bm{\Lambda} \bm{b}_i.
\end{align}

On the other hand, for each $i \in \hat{S}$, we have
\begin{align} \label{eq:upper2}
\bm{b}_i^{\top} \bm{\Lambda} \bm{b}_i \le \bm{b}_i^{\top} (\bm{I}_d+\bm{b}_i\bm{b}_i^{\top})^{-1} \bm{b}_i = \frac{\bm{b}_i^{\top} \bm{b}_i}{1+\bm{b}_i^{\top}\bm{b}_i} = 
\frac{ \bm{a}_i^{\top} \bm{C}^{-1}\bm{a}_i }{1+ \bm{a}_i^{\top} \bm{C}^{-1}\bm{a}_i} \le \frac{\sigma_{\max} }{1+ \sigma_{\max}},
\end{align}
where the last inequality is due to $\sigma_{\max} := \max_{i \in [n]} \bm{a}_i^{\top} \bm{C}^{-1}\bm{a}_i$ and the fact that the function $h(t)=t/(1+t)$ is non-decreasing in $t\in \Re_+$.

Combining the results \eqref{eq:upper1} and \eqref{eq:upper2}, it follows that
\begin{align}\label{ineq1}
\bm{b}_j^{\top} \bm{\Lambda} \bm{b}_j \le \frac{1}{1- \bm{b}_i^{\top} \bm{\Lambda} \bm{b}_i} \bm{b}_i^{\top} \bm{\Lambda} \bm{b}_i \le (1+\sigma_{\max}) \bm{b}_i^{\top}\bm{\Lambda} \bm{b}_i, \forall i\in \hat{S}, \forall j\in [n]\setminus \hat{S},
\end{align}
where the second inequality is due to non-decreasing of $h(t)=1/(1-t)$ over $t\in [0,1)$.

Let us denote $\beta_{\min} := \min_{i \in \hat{S}} \bm{b}_i^{\top}\bm{\Lambda} \bm{b}_i$. Then, according to the inequality \eqref{ineq1}, a feasible solution to the Lagrangian dual problem \eqref{eq:dopt_ld} can be constructed by
\begin{align*}
\hat{\bm \Lambda} = t \bm{\Lambda}, \hat{\nu} = t (1+\sigma_{\max}) \beta_{\min}, 	\hat{\mu}_i = t (1+\sigma_{\max}) \bm{b}_i^{\top}\bm{\Lambda} \bm{b}_i -\hat{\nu}, \forall i \in \hat{S}, \hat{\mu}_i=0, \forall i \in [n]\setminus \hat{S},
\end{align*}
where $t>0$ is a scalar and will be specified later.

Plugging solution $(\hat{\bm \Lambda},\hat{\nu}, \hat{\bm \mu} )$ to problem \eqref{eq:dopt_ld}, the objective value with the scalar $t$ satisfies
\begin{align*} 
\hat{z}_R &\le \min_{t}\{\ldet \bm C - \ldet \bm \Lambda -d \log(t) 
+ t\tr\bm \Lambda + t(1+\sigma_{\max})\tr(\bm X \bm \Lambda) -d\}\\
&= \min_{t}\{\ldet \bm C - \ldet \bm \Lambda -d \log(t) 
+ td+ t\sigma_{\max}\tr(\bm X \bm \Lambda) -d\},
\end{align*}
where the equation is from the fact $\tr((\bm I_d +\bm X)\bm \Lambda)=d$.

Minimizing the left-hand side above over $t$, the optimal scalar is $t^* = d/(d+\sigma_{\max} \tr(\bm X \bm \Lambda))$ and thus the final objective value with $t^*$ of problem \eqref{eq:dopt_ld} is equal to
\begin{align*}
z^* \le \hat{z}_R \le &\ldet \bm C + \ldet(\bm{I}_d+\bm{X}) + d \log (1+\sigma_{\max}/d \tr(\bm{X}\bm{\Lambda}) )\\
\le& \ldet \bm C + \ldet(\bm{I}_d+\bm{X}) + d \log \left (1+ \frac{\sigma_{\max}}{d} \frac{s \sigma_{\max}}{1+\sigma_{\max}} \right ),
\end{align*}
where the first inequality is from Proposition \ref{prop:dopt_ld} and the last one is due to inequality \eqref{eq:upper2}. 

Now we use the Lagrangian dual problem \eqref{eq:doptc_ld} of \ref{model_doptc} to establish a complementary approximation bound of the previous one, $d \log(1+({n-s})/d\sigma_{\max}^2/(1+\sigma_{\max}) )$. Because the objective function of \ref{model_doptc} is $\ldet(\bm I_d -\sum_{i\in [n]\setminus \hat{S}} \bm q_i \bm q_i^{\top})$, for any $i \in [n]\setminus \hat{S}$ and $j\in \hat{S}$, the local-optimality condition becomes
	\begin{align*} 
\det(\bm{I}_d + \bm{X}) \ge \det(\bm{I}_d + \bm{X} + \bm{q}_i\bm{q}_i^{\top} -\bm{q}_j\bm{q}_j^{\top}) 	\Longleftrightarrow 
\bm{q}_j^{\top} \bm{\Lambda} \bm{q}_j - \bm{q}_i^{\top} \bm{\Lambda} \bm{q}_j \bm{q}_j^{\top} \bm{\Lambda} \bm{q}_i \ge \bm{q}_i^{\top} \bm{\Lambda} \bm{q}_i (1- \bm{q}_j^{\top} \bm{\Lambda} \bm{q}_j), 
\end{align*}
where we define $\bm X:= -\sum_{\ell\in [n]\setminus \hat{S}} \bm q_{\ell} \bm q_{\ell}^{\top} $ and $\bm \Lambda := (\bm I_d + \bm X)^{-1}$ here.

Then,	by dropping the term $\bm{q}_i^{\top} \bm{\Lambda} \bm{q}_j \bm{q}_j^{\top} \bm{\Lambda} \bm{q}_i$ above, we obtain
\begin{align}\label{ineq2}
- \bm q_{j}^{\top} \bm \Lambda \bm q_j \le - \bm q_i^{\top} \bm \Lambda \bm q_i (1- \bm q_j^{\top} \bm \Lambda \bm q_j) \le - \bm q_i^{\top} \bm \Lambda \bm q_i \frac{1}{1+\sigma_{\max}}, \forall i\in [n]\setminus \hat{S}, \forall j\in \hat{S},
\end{align}
where the second inequality is from by plugging the expressions of $\bm q_j$ and $\bm q_i$, i.e.,
\begin{align*}
&\bm q_j \bm \Lambda \bm q_j = \bm a_j^{\top} (\bm C + \bm A\bm A^{\top})^{-\frac{1}{2}} \left[\bm I_d - (\bm C + \bm A\bm A^{\top})^{-\frac{1}{2}} \bm A_{[n]\setminus \hat{S}} \bm A_{[n]\setminus \hat{S}}^{\top} (\bm C + \bm A\bm A^{\top})^{-\frac{1}{2}} \right]^{-1} \\
&(\bm C + \bm A\bm A^{\top})^{-\frac{1}{2}} \bm a_j= \bm a_j^{\top} \left( \bm C + \bm A\bm A^{\top} - \bm A_{[n]\setminus \hat{S}} \bm A_{[n]\setminus \hat{S}}^{\top} \right)^{-1} \bm a_j \le \bm a_j^{\top} ( \bm C + \bm a_j \bm a_j^{\top})^{-1} \bm a_j \le \frac{ \sigma_{\max}}{1+ \sigma_{\max}}.
\end{align*}

Using inequality \eqref{ineq2}, we can construct a feasible solution to the Lagrangian dual problem \eqref{eq:doptc_ld} as below
\begin{align*}
\hat{\bm \Lambda} = t \bm{\Lambda}, \hat{\nu} = t \frac{1}{1+ \sigma_{\max}} \beta^c_{\min}, 	\hat{\mu}_i =- t \frac{1}{1+ \sigma_{\max}}\bm{q}_i^{\top}\bm{\Lambda} \bm{q}_i -\hat{\nu}, \forall i \in [n]\setminus \hat{S}, \hat{\mu}_i=0, \forall i \in \hat{S},
\end{align*}
where $\beta^c_{\min} := -\max_{i \in[n]\setminus\hat{S}} \bm{q}_i^{\top}\bm{\Lambda} \bm{q}_i$.

Analogous to analyzing the previous approximation bound, we calculate the optimal scalar $t^*= d/[d-\sigma_{\max}/(1+\sigma_{\max})\tr(\bm X \bm \Lambda)]$ and plugging the optimal scalar, the objective value of problem \eqref{eq:doptc_ld} satisfies
\begin{align*}
z^* \le \hat{z}_R &\le \ldet(\bm C+\bm A \bm A^{\top}) + \ldet(\bm{I}_d+\bm{X}) + d \log \left(1- \frac{\sigma_{\max}}{d(1+\sigma_{\max})} \tr(\bm{X}\bm{\Lambda}) \right)\\
&= \ldet(\bm C+\bm A \bm A^{\top}) + \ldet(\bm{I}_d+\bm{X}) + d \log \bigg(1+ \frac{\sigma_{\max}}{d(1+\sigma_{\max})} \sum_{i\in [n]\setminus \hat{S}}\bm{q}_i^{\top}\bm{\Lambda}\bm q_i \bigg)\\
& \le \ldet\bigg(\bm C +\sum_{i \in \hat{S}} \bm a_i \bm a_i^{\top}\bigg) + d \log \left(1+ \frac{(n-s)\sigma^2_{\max}}{d(1+\sigma_{\max})} \right),
\end{align*}
where the last inequality is because for any $i\in [n]\setminus\hat{S}$, we have 
\[\bm q_i \bm \Lambda \bm q_i
= \bm a_i^{\top} \left( \bm C + \bm A\bm A^{\top} - \bm A_{[n]\setminus \hat{S}} \bm A_{[n]\setminus \hat{S}}^{\top} \right)^{-1} \bm a_i \le \bm a_i^{\top} \bm C^{-1} \bm a_i \le \sigma_{\max}.\]

Minimizing the two approximation bounds gives us the symmetric one, $ d \log (1+ \frac{\bar{s}\sigma^2_{\max}}{d(1+\sigma_{\max})} )$.

\item $\bar{s}\log(\bar{s})$-approximation bound.

According to \ref{model_mesp}, the output value of the local-search \Cref{algo:localsearch} becomes
\begin{align*}
z^* \le \hat{z}_M \le & \log f\bigg(\sum_{i\in \hat S} \bm v_i \bm v_i^{\top} \bigg) + \ldet (\bm C) + s\min\left\{\log(s), \log\left(n-s-\frac{n}{s}+2\right)\right\}, 
\end{align*}
where the inequality results from \cite[Theorem 7]*{li2020best}.

Using the complementary M-DDF, we can also show 
\begin{align*}
z^* \le \hat{z}_M^c \le \log f\bigg(\sum_{i\in \hat S} \bm v_i \bm v_i^{\top} \bigg) + \ldet (\bm C) + (n-s)\min\left\{\log(n-s), \log\left(s-\frac{n}{n-s}+2\right)\right\}.
\end{align*}

Taking the minimum of the two bounds above and using the identity $\log f(\sum_{i\in \hat S} \bm v_i \bm v_i^{\top} ) + \ldet (\bm C) =\ldet(\bm C +\sum_{i\in \hat S} \bm a_i \bm a_i^{\top})$ complete the proof. 
		\end{enumerate}
	
\qed
	\end{proof}
	
	\subsection{Proof of \Cref{them:samp} }\label{proof:themsamp}
		\themsamp*
	
To facilitate the analysis of approximation bounds, let us first introduce the elementary symmetric polynomial function and its relation with eigenvalues.
	
	\begin{definition}[elementary symmetric polynomial]
		For a vector $\bm y \in \Re^n$ and an integer $k\in [n]$, let $e_k(\bm y)$ denote the degree $k$ elementary symmetric polynomial, i.e., 
		\[e_k(\bm y) := \sum_{S \in \binom{[n]}{k}} \prod_{i \in S} y_i.\]
	\end{definition}

For any symmetric matrix $\bm X \in \S^n$ with its eigenvalue vector $\bm \lambda\in \Re^n$ and an integer $k\in [n]$, it is well-known that $e_k(\bm \lambda)$ is equal to
\begin{align}\label{eq_poly}
e_k(\bm \lambda) = \sum_{S \in \binom{[n]}{k}} \det(\bm X_{S, S}).
\end{align}

Now we are ready to prove \Cref{them:samp}.
	
	\begin{proof}
		The proof can be split into four parts, corresponding to three approximation bounds and derandomization. 
		
		\noindent\textbf{Part (i)} Let $T:=\supp( \hat{\bm x}_D)$ be the index set of nonzero entries in $\hat{\bm x}_D$. 

		For notational convenience, we define two matrices $\bm X\in \S_+^n$ and $\bm Y\in \S_+^n$ as
		\[ \bm X := \Diag(\hat{\bm x}_D) + \bm Y, \ \ \bm Y:=
		\Diag^{\frac{1}{2}}(\hat{\bm x}_D) \bm{B}^{\top} \bm{B} \Diag^{\frac{1}{2}}(\hat{\bm x}_D). \]
		For each $i\in 	[n]\setminus T$, we can show that the $i$-th column and row vectors of $\bm X$ and $\bm Y$ consist of zeros as $(\hat{x}_D)_i=0$. It follows that both $\bm X$ and $\bm Y$ have rank at most $|T|$. Thus, we let $\lambda_1 \ge \cdots \ge \lambda_{|T|}\geq 0 =\lambda_{|T|+1}=\cdots =\lambda_{n}$ and $\beta_1 \ge \cdots \ge \beta_{|T|}\ge 0 =\beta_{|T|+1}=\cdots =\beta_n$ denote the eigenvalues of $\bm X$ and $\bm Y$, respectively. 
		%
		Furthermore, according to Weyl's inequalities and the fact that $0<x_{\min}\leq 1$, two eigenvalue vectors $\bm \lambda$ and $\bm \beta$ satisfy
		\begin{align}\label{ineq_eign}
		\lambda_{i}\ge x_{\min} + \beta_i \ge x_{\min}(1+\beta_i), \forall i =1,\cdots, |T| .
		\end{align}

		%
		
		
		
		Next, the exponential expectation of the objective value of \ref{model_dopt} is equal to
		\begin{align*}
		\mathbb{E} \bigg[ \det \bigg(\bm{I}_d + \sum_{i \in \tilde{S}_D} \bm{b}_i\bm{b}_i^{\top} \bigg) \bigg] & = \sum_{S \in \binom{[n]}{s} } \mathbb{P}[\tilde{S}_D=S] 
		\det \bigg(\bm{I}_d+ 	\sum_{i \in {S}} \bm{b}_i\bm{b}_i^{\top} \bigg) 
		\\	& = \frac{1}{ \sum_{\bar{S} \in \binom{[n]}{s} } \prod_{i \in \bar{S}} (\hat{x}_D)_i } \sum_{S \in \binom{[n]}{s} } \prod_{i \in S} (\hat{x}_D)_i \det \left(\bm{I}_n+ \bm{B}^{\top} \bm{B} \right)_{S,S}
		\\ & = \frac{1}{	e_s(\hat{\bm x}_D) } \sum_{S \in \binom{[n]}{s} } \det (\bm X_{S,S})
		= \frac{1}{	e_s(\hat{\bm x}_D) } e_{s}(\bm \lambda) = \frac{1}{	e_s(\hat{\bm x}_D) } \lambda_1^s e_{s}\left(\frac{1}{\lambda_1}\bm \lambda\right) 
		\\ & \ge \frac{1}{	e_s(\hat{\bm x}_D) } \lambda_1^s \binom{|T|}{s} e_{|T|}\left(\frac{1}{\lambda_1}\bm \lambda\right) = \frac{1}{	e_s(\hat{\bm x}_D) } \lambda_1^{s-|T|} \binom{|T|}{s} e_{|T|}\left(\bm \lambda\right) 
		\\ & = \frac{1}{	e_s(\hat{\bm x}_D) } \lambda_1^{s-|T|} \binom{|T|}{s} e_{n}\left(\left[{\lambda_1}, \cdots, {\lambda_{|T|}}, 1, \cdots, 1\right]^{\top}\right)
		\\ & \ge \frac{1}{	e_s(\hat{\bm x}_D) } \lambda_1^{s-|T|} \binom{|T|}{s} e_{n}\left(\left[x_{\min}({\beta_1+1}), \cdots, x_{\min}({1+\beta_{|T|}}), 1, \cdots, 1\right]^{\top}\right) 
		\\ & \ge \frac{1}{	\left(\frac{s}{|T|}\right)^s \binom{|T|}{s}} \lambda_1^{s-|T|} \binom{|T|}{s} x_{\min}^{|T|} e_n(\bm 1+\bm \beta) =\left(\frac{|T|}{s}\right)^{s} \lambda_1^{s-|T|} x_{\min}^{|T|} \det(\bm I_n + \bm Y)
		\\ &=\left(\frac{|T|}{s}\right)^{s} \lambda_1^{s-|T|} x_{\min}^{|T|} \det(\bm I_d +	\bm B \Diag(\hat{\bm x}_D) \bm B^{\top})
		\\ & \ge \left(\frac{|T|}{s}\right)^{s} x_{\min}^{n} \lambda_1^{s-|T|} \exp(\hat{z}_R-\ldet \bm C)
		\\ & \ge \left(\frac{|T|}{s}\right)^{s} x_{\min}^{n} \lambda_1^{s-|T|} \exp(z^*-\ldet \bm C),
		\end{align*}
		where the first inequality stems from the fact that each element of vector $\frac{1}{\lambda_1}\bm \lambda$ is no larger than 1, the sixth equality is due to $\lambda_i=0$ for all $i\in[|T|+1, n]$, the second inequality is from \eqref{ineq_eign}, the third inequality is obtained by Maclaurin's inequality, the last equation is from the fact that matrices $\bm Y$ and $\bm B\Diag(\hat{\bm x}_D) \bm B^{\top}$ have the same nonzero eigenvalues, the fourth inequality is because $\hat{\bm x}_D$ is an optimal solution and $x_{\min} \le 1$,
		and the last inequality is due to $\hat{z}_R\ge z^*$.
		
		
		Taking logarithm on both sides of the inequality above, we obtain
		\begin{align*}
		\ldet \bm C+\log	\mathbb{E} \bigg[ \det \bigg(\bm{I}_n+ \sum_{i \in \tilde{S}_D} \bm{b}_i\bm{b}_i^{\top} \bigg) \bigg] & \ge z^* + n\log(x_{\min})+(s-|T|)\log(\lambda_1)-s\log\left(\frac{s}{|T|}\right)\\
		&\ge z^* + n\log(x_{\min}) -(n-s)\log(1+\lambda_{\max}(\bm B^{\top}\bm B )),
		\end{align*}
		where the inequality is because $\lambda_1 \le 1+\beta_1 \le 1+\lambda_{\max}(\bm B^{\top}\bm B)$.

		\noindent\textbf{Part (ii)} The objective value led by the output $\tilde{S}_M$ of \Cref{algo:sampling} with $\hat{\bm x}=\hat{\bm{x}}_M$ is bounded by
		\begin{align*}
		& \log \mathbb{E} \bigg[ f\bigg(\sum_{i \in \tilde{S}_M} \bm{v}_i\bm{v}_i^{\top} \bigg) \bigg] + \ldet (\bm C) \ge 
	\hat{z}_M-s\log\left(\frac{s}{n}\right) - \log \bigg(\binom{n}{s}\bigg),
		\end{align*}
		where the inequality is from \cite[Theorem 5]{li2020best} and using the fact $\hat{z}_M \ge z^*$, we obtain the approximation bound.
		
		\noindent\textbf{Part (iii)} Given the output $\tilde{S}_M^c$ of \Cref{algo:sampling} with $\hat{\bm x}=\hat{\bm{x}}_M^c$, the objective value of Complementary DDF satisfies
		\begin{align*}
		& \log \mathbb{E} \bigg[ f\bigg(\sum_{i \in \tilde{S}_M^c} \bm{v}_i\bm{v}_i^{\top} \bigg) \bigg] + \ldet (\bm C) \ge 
		\hat{z}_M-(n-s)\log\left(\frac{n-s}{n}\right) - \log \bigg(\binom{n}{n-s}\bigg),
		\end{align*}
		where the inequality is from \cite[Theorem 5]{li2020best} and using the fact $\hat{z}_M \ge z^*$, we obtain the approximation bound.
		
		\noindent\textbf{Part (iv)} According the proof of \Cref{them:mesp}, for any cardinality-$s$ subset $S\subseteq [n]$, we must have
		\begin{align*}
		\det \bigg(\bm{C}+\sum_{i \in S} \bm{a}_i\bm{a}_i^{\top}\bigg) = f \bigg(\sum_{i \in [n]} x_i \bm v_i \bm v_i^{\top}\bigg).
		\end{align*}
		The remainder of the proof follows the de-randomization procedure of \cite[Thm.~6]{li2020best}. \qed
	\end{proof}
	
	\section{Supplementary numerical results}\label{app:supnum}
As a supplement of \Cref{table:BC300}, we present in Table \ref{table:suppBC300} detailed numerical results for LP/NLP B\&B with and without submodular cuts. The results expose the quality of the lower bound (LB) and the upper bound (UB) computed by LP/NLP B\&B when the instances are not solved to optimality.

\begin{table}[ht] 
	\centering
	\caption{
 LP/NLP B\&B with and without submodular cuts on IEEE instances} 
	\begin{threeparttable}
		\setlength{\tabcolsep}{4.5pt}\renewcommand{\arraystretch}{1.1}
		\begin{tabular}{ c c|r| r r r r| r r r r}
			\hline 
			\multirow{2}{1.1em}{$n$}	& \multirow{2}{0.5em}{$s$} & \multirow{2}{1.8em}{$z^*$}& \multicolumn{4}{c|}{LP/NLP B\&B} & \multicolumn{4}{c}{LP/NLP B\&B $+$ submod. cuts} 
			\\ \cline{4-11} 
			&	& & \multicolumn{1}{c}{LB} & \multicolumn{1}{c}{UB} & \multicolumn{1}{c}{MIPgap\tnote{1}}& {time\tnote{2} } & \multicolumn{1}{c}{LB} & \multicolumn{1}{c}{UB} & \multicolumn{1}{c}{MIPgap\tnote{1}} & {time\tnote{2} } \\ 
			\hline
			118 & 5	& 80.15& 80.15& 80.15 & 0.00& 11 & 80.15& 80.15 & 0.00&1	\\	
			118 & 10& 156.90& 156.90 & 156.90 & 0.00 & 423 & 156.90& 156.90 & 0.00& 5		\\	
			118 & 15& 231.63& 230.68 & 233.86 & 3.18& - &231.63 & 231.63 & 0.00& 1019 	\\	
			118 & 16& 246.31 &244.65 & 248.92 & 4.27& - &246.31& 246.31 & 0.00& 5368	\\	
			118 & 17& 260.94 &259.77 & 264.96 & 5.19& - & 260.94 & 261.55 & 0.61 & - 	\\	
			118 & 18& 275.56 & 275.01 & 279.82 &4.81 & - & 275.30 & 276.64 & 1.34 & - 	\\	
			118 & 19& 290.15 & 287.34 & 295.82 &8.47 & - &289.53 &	291.69 & 2.16 & - 	\\	
			118 & 20& 304.69 & 300.83 & 311.94 & 11.11 & - & 304.07 &	306.27 & 2.20 & - 	\\	
			300& 35 & 367.49 & 360.53 & 368.29& 7.76& - &367.49& 367.49 & 0.00& 33\\
			300& 40 & 404.02 & 326.62 & 410.14&83.52 & - &404.02& 404.02 & 0.00& 505 \\
			300& 45 & 439.81 & 394.85 & 442.91 & 48.06& - & 439.81 & 439.81 & 0.00& 1334 \\
			300& 50 & 474.49 & 425.78 & 478.45 & 52.67& - & 474.31 & 475.19 & 0.88 & - \\
			300& 51 & 481.24 & 444.43 & 484.84 &40.41 & - & 481.17 & 482.07 & 0.90 & - \\
			300& 52 &487.98 & 364.09 & 504.14 &140.05 & - & 487.64 & 490.08 & 2.44 & - \\
			300& 53 &494.66 & 323.89& 510.02& 186.13 & - & 493.93 & 497.20 &3.27 & - \\
			300& 54 & 501.27 & 345.42 & 514.02 &168.61 & - & 500.36 & 503.91 &3.55 & - \\
			300& 55 & 507.84 &360.22 & 518.89 & 158.67& - & 506.21& 510.81 &4.60 & - \\
			300& 56 &514.37 & 381.03 & 524.84 & 143.81 & - & 512.14& 517.43& 5.29& - \\
			300& 57 & 520.89 &400.04 & 529.63 & 129.59 & - & 519.55 & 524.12 & 4.57& - \\		
			\hline
		\end{tabular}%
		\begin{tablenotes}
 			\item[1] MIPgap = UB $-$ LB
 			\item[2] time in seconds; ``-'': instance not solved within four hours
		\end{tablenotes}
	\end{threeparttable}
	\label{table:suppBC300}
\end{table}

\end{appendices}




\end{document}